\newcommand{\enveq}[1]{\begin{equation}#1\end{equation}}
\newcommand{\enveqn}[1]{\begin{equation*}#1\end{equation*}}
\newcommand{\envgan}[1]{\begin{gather*}#1\end{gather*}}
\newcommand{\enval}[1]{\begin{align}#1\end{align}}
\newcommand{\envaln}[1]{\begin{align*}#1\end{align*}}
\newcommand{\bma}{\left(\begin{array}{cc}}
\newcommand{\ema}{\end{array}\right)}
\newcommand{\bca}{\left(\begin{array}{c}}
\newcommand{\eca}{\end{array}\right)}
\newcommand{\cA}{\ensuremath{\mathcal{A}}}
\newcommand{\J}{\mathcal{J}}
\newcommand{\cD}{\ensuremath{\mathcal{D}}}
\newcommand{\domain}{\ensuremath{\mathrm{Dom}}}
\newcommand{\cH}{\ensuremath{\mathcal{H}}}
\newcommand{\cO}{\ensuremath{\mathcal{O}}}
\newcommand{\bR}{\ensuremath{\mathbb{R}}}
\newcommand{\bC}{\ensuremath{\mathbb{C}}}
\newcommand{\bZ}{\ensuremath{\mathbb{Z}}}
\newcommand{\bN}{\ensuremath{\mathbb{N}}}
\newcommand{\la}{\left\langle}
\newcommand{\ra}{\right\rangle}
\newcommand{\ox}{\otimes}
\newcommand{\half}{\frac{1}{2}}
\newcommand{\thalf}{\tfrac{1}{2}}
\newcommand{\p}{\partial}
\newcommand{\pe}{\ensuremath{\p_{e}}}
\newcommand{\pf}{\ensuremath{\p_{f}}}
\newcommand{\pk}{\ensuremath{\p_{k}}}
\newcommand{\pki}{\ensuremath{\p_{k}^{-1}}}
\newcommand{\re}[3]{t_{#2, #3}^{#1}}
\newcommand{\coms}{\tilde{S}}
\newcommand{\comt}{\tilde{T}}
\def\section{\@startsection{section}{1}{\z@}{-3.5ex plus -1ex minus
  -.2ex}{2.3ex plus .2ex}{\large\bf}}
\def\subsection{\@startsection{subsection}{2}{\z@}{-3.25ex plus -1ex
  minus -.2ex}{1.5ex plus .2ex}{\normalsize\bf}}
\numberwithin{equation}{section} 
\theoremstyle{plain} 
\newtheorem{thm}{Theorem}[section]
\newtheorem{lemma}[thm]{Lemma}
\newtheorem{prop}[thm]{Proposition}
\newtheorem{corl}[thm]{Corollary}
\theoremstyle{definition} 
\newcommand{\A}{\mathcal{A}}  
\newcommand{\B}{\mathcal{B}}  
\newcommand{\C}{\mathbb{C}}   
\newcommand{\D}{\mathcal{D}}  
\renewcommand{\H}{\mathcal{H}}  
\newcommand{\M}{\mathcal{M}}  
\newcommand{\N}{\mathbb{N}}   
\newcommand{\R}{\mathbb{R}}   
\newcommand{\T}{\mathbb{T}}   
\title{A residue formula for the fundamental Hochschild $3$-cocycle for $SU_{q}(2)$}
\author{Ulrich
Kr\"{a}hmer\dag\thanks{email:
\texttt{ulrich.kraehmer@glasgow.ac.uk}, 
\texttt{adam.rennie@anu.edu.au},
\texttt{roger.senior@anu.edu.au}},\ 
\ Adam Rennie\ddag,\ 
\ Roger Senior\ddag \\[6pt]
\dag School of Mathematics \&
Statistics, University of Glasgow\\
15 University Gardens, G12 8QW Glasgow, Scotland\\[6pt]
\ddag Mathematical Sciences Institute,
Australian National University\\
Acton, ACT, 0200, Australia\\[6pt]
}
\begin{document}

\maketitle

\vspace{-8pt}

\begin{abstract}
An analogue of a spectral triple over $SU_{q}(2)$ is constructed for which the usual assumption 
of bounded commutators with the Dirac operator fails. 
An analytic expression analogous to that for the Hochschild
class of the Chern character for spectral triples 
yields a non-trivial twisted Hochschild 3-cocycle. 
The problems arising from the unbounded commutators 
are overcome by defining a residue functional using projections to cut
down the Hilbert space. 
\end{abstract}

\section{Introduction}
\label{sec:intro}

This paper studies the homological dimension of the quantum group 
$SU_q(2)$ from the perspective of Connes' spectral triples.
We use an analogue of a spectral triple to construct, by a
residue formula, a nontrivial Hochschild 3-cocycle. 
Thus we obtain  finer dimension information
than is provided by the nontriviality of a 
$K$-homology class, which is sensitive only to dimension modulo 2.

The position of quantum groups 
within noncommutative geometry has been studied intensively
over the last 15 years. 
In particular, Chakraborty
and Pal \cite{ChP1} introduced a spectral triple for $SU_q(2)$, and this construction was subsequently refined
in \cite{DLSSV} and generalised by 
Neshveyev and Tuset in \cite{NT2} to all compact Lie groups $G$.
These spectral triples have analytic dimension $\dim G$ 
and nontrivial $K$-homology class. 
However, when Connes computed the Chern character for Chakraborty and
Pal's spectral triple \cite{C1}, 
he found that it had cohomological dimension $1$ in the sense that 
the degree $\dim SU(2)=3$ term
in the local index formula is a Hochschild coboundary.  
Analogous results for the spectral triple from
\cite{DLSSV} were obtained in \cite{DLSSV2}.

Contrasting these `dimension drop' results, 
Hadfield and the first author \cite{HK1,HK2} 
showed that $SU_q(2)$ is a twisted Calabi-Yau algebra of dimension 3 whose twist is the inverse of the modular automorphism for the Haar state on this compact quantum group, cf. Section \ref{sec:suq2}. 
They also computed a cocycle 
representing a generator of the nontrivial degree $3$ 
Hochschild cohomology groups (which we call the fundamental cocycle), and 
a dual degree $3$ Hochschild cycle  
which we denote $dvol$.

The starting point of the present paper is 
the concept of a `modular' spectral triple \cite{CNNR}. These are analogous to
ordinary spectral triples except for the use of twisted traces. 
The examples considered in \cite{CNNR} arise from
KMS states of circle actions on $C^*$-algebras, and yield nontrivial
$KK$-classes with $1$-dimensional Chern characters 
in twisted cyclic cohomology.
In \cite{KW} it was then shown that they also can be used to obtain
the fundamental cocycle of the standard Podle\'s quantum 2-sphere.

Motivated by this, our
construction here extends the modular spectral triple on 
the Podle\'s sphere to all of $SU_q(2)$. This extension is not a
modular spectral triple, 
but as our main theorem shows, still
captures the homological dimension 3:
we give a residue formula for a twisted Hochschild $3$-cocycle which
is a nonzero multiple of the fundamental cocycle. We obtain this formula
by analogy with Connes' formula for the Hochschild class of the Chern
character of spectral triples, 
\cite[Theorem 8, IV.2.$\gamma$]{C} and \cite{BeF,CPRS1}.  A natural next question that
arises is whether our constructions provide a representative of a
nontrivial $K$-homological class.

The organisation of the paper is as follows. In Section  \ref{sec:suq2} we recall the definitions of 
$SU_q(2)$, the Haar state on $SU_q(2)$ and the associated GNS representation, and finally the modular theory
of the Haar state. In Section \ref{sec:homology} we recall the homological constructions of \cite{HK1,HK2}, 
and prove some elementary results 
we will need when we come to show that our residue cocycle does indeed
recover the class of the fundamental cocycle.

Section \ref{sec:mero} contains all the key analytic results 
on meromorphic extensions of certain functions 
that allow us to prove novel summability type results for operators whose 
eigenvalues have mixed polynomial and exponential growth,
see Lemma \ref{lem:general-pole-3}.

Section \ref{sec:spec} constructs an analogue of a spectral triple $(\A,\H,\D)$
over the algebra $\A$ of polynomials in the 
standard generators
of the $C^*$-algebra $SU_q(2)$. The key requirement of
bounded commutators fails, and this `spectral triple' fails to be finitely summable in the usual sense 
(however, it is $\theta$-summable). Using an ultraviolet cutoff 
we can recover finite summability of the operator $\D$ on a subspace
of $\H$ with respect to a suitable twisted trace. 
However, our representation of $\A$ does not restrict to this 
subspace, and so we are prevented from obtaining a genuine spectral triple. 


In Section \ref{sec:res-hochs} we define a residue functional $\tau$. Heuristically, for an operator $T$, 
$$
\tau(T)=\mbox{Res}_{s=3}\mbox{Trace}(\Delta^{-1} QT(1+\D^2)^{-s/2}).
$$
Here $\Delta$ implements the modular 
automorphism of the Haar state, $\D$ is  our Dirac operator and $Q$ is
a suitable projection that implements the cutoff. The existence, first
of the trace, and then the residue, are both nontrivial matters. 

The main properties of $\tau$ are described in 
Theorem  \ref{thm_res_gives_int}, and in particular we show that
the domain of $\tau$ contains the products of commutators $a_{0} [\cD, a_{1}] [\cD, a_{2}] [\cD, a_{3}]$ for
$a_i\in\A$. In addition, $\tau$ is a twisted trace on a suitable subalgebra of the domain containing these products. 
The main result, Theorem \ref{thm:big-fish}, proves that the map 
$a_{0}, \ldots, a_{3} \mapsto \tau(a_{0} [\cD, a_{1}] [\cD, a_{2}] [\cD, a_{3}])$ is a twisted Hochschild
3-cocycle, whose cohomology class is non-trivial and coincides with (a multiple of) the fundamental class.

\medskip

{\bf Acknowledgements} We would like to thank our colleagues Alan
Carey, Victor Gayral, Jens Kaad, Andrzej Sitarz and Joe V\'arilly for stimulating  
discussions on these topics. 
The second and third authors were supported by the Australian Research Council. 
The first author was supported by 
the EPSRC fellowship EP/E/043267/1 and partially by the
Polish Government Grant N201 1770 33 and 
the Marie Curie PIRSES-GA-2008-230836
network.

\section{Background on $SU_{q}(2)$}
\label{sec:suq2}

The notations and conventions of \cite{KS} will be used throughout for consistency. We 
recall that $\A:=\cO(SU_q(2))$, for $q \in (0, 1)$, is the unital Hopf $*$-algebra with generators 
$a, b, c, d$ satisfying the relations

\envaln{
ab = qba, \ \ \ ac = qca, \ \ \ bd &= qdb, \ \ \ cd = qdc, \ \ \ bc = cb \\
ad = 1 + qbc, \ & \ \ da = 1 + q^{-1}bc
}

and carrying the usual Hopf structure,
as in e.g. \cite{KS}. The involution is
given by 

\enveqn{
a^{\ast} = d, \ \ \ b^{\ast} = -qc, \ \ \ c^{\ast} = -q^{-1}b, \ \ \ d^{\ast} = a.
}

We choose to view $\A$ as being generated by $a, b, c, d$ explicitly, 
rather than just $a, b$, in order to make formulae more readable. 

\begin{prop}[{\cite[Proposition 4.4]{KS}}]
The set $\{ a^{n}b^{m}c^{r}, \  b^{m}c^{r}d^{s} \ | \ m, r, s \in \bN_{0}, \ n \in \bN \}$ 
is a vector space basis of $\A$. These monomials will be referred to as the polynomial basis.
\end{prop}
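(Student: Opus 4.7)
The plan is to establish this PBW-type basis statement in two standard steps: spanning and linear independence. For spanning I proceed by induction on the total degree in the generators. The commutation relations $ab=qba$, $ac=qca$, $bc=cb$, $bd=qdb$, $cd=qdc$ are homogeneous, so their iterated application lets any word in $a,b,c,d$ be put, up to a power of $q$, into the form $a^{n}b^{m}c^{r}d^{s}$, with all $a$'s on the left, all $d$'s on the right, and $b$'s preceding $c$'s in the middle. If either $n=0$ or $s=0$ this is a basis monomial; otherwise commuting one $a$ rightwards through the central block and then applying $ad = 1+qbc$ yields
\envaln{
a^{n}b^{m}c^{r}d^{s} &= q^{m+r}\, a^{n-1}b^{m}c^{r}(ad)\,d^{s-1} \\
&= q^{m+r}\, a^{n-1}b^{m}c^{r}d^{s-1} + q^{m+r+1}\, a^{n-1}b^{m+1}c^{r+1}d^{s-1},
}
both of whose values of $n+s$ are strictly smaller. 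The induction then closes the spanning argument.

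For linear independence I would apply Bergman's diamond lemma to the rewriting system formed by the six commutation rules (oriented so as to normal-order the generators) together with the replacement rule $ad\to 1+qbc$, using a degree-lexicographic termination order in which $ad$ dominates $1+qbc$. The irreducible words for this system are exactly the claimed basis. It then suffices to check that the finitely many overlap ambiguities---the purely commutative triples such as $cba$, $dba$, $dca$, $dcb$, and the mixed triples $bad$, $cad$, $adb$, $adc$ involving the rule $ad\to 1+qbc$---resolve along both possible reduction paths to the same normal form. A more hands-on alternative is to exhibit the standard Woronowicz irreducible $\ast$-representation of $\A$ on $\ell^{2}(\bN_{0})\otimes\ell^{2}(\bZ)$ and observe directly that distinct basis monomials act as linearly independent weighted-shift operators, whence any linear relation would force each coefficient to vanish.

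The main obstacle is the confluence verification at the mixed ambiguities $bad$, $cad$, $adb$, $adc$, where the relation $ad=1+qbc$ and the derived identity $da=1+q^{-1}bc$ interact: reducing in different orders produces distinct-looking $bc$ correction terms, and reconciling them is the precise place where the compatibility of the defining relations of $\cO(SU_{q}(2))$ is used. This step carries the substantive content of the proposition, while the reductions among the commutative pairs are routine $q$-bookkeeping.
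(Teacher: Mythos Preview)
The paper does not prove this proposition at all; it is quoted verbatim from Klimyk--Schm\"udgen \cite[Proposition~4.4]{KS} as background, with no argument given. So there is no ``paper's proof'' to compare against, and your outline is the standard textbook route.

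Two technical points are worth tightening. First, in the spanning step you say you induct on total degree, but your reduction
\[
a^{n}b^{m}c^{r}d^{s}=q^{m+r}a^{n-1}b^{m}c^{r}d^{s-1}+q^{m+r+1}a^{n-1}b^{m+1}c^{r+1}d^{s-1}
\]
leaves the total degree of the second summand unchanged. What actually decreases is $n+s$ (equivalently $\min(n,s)$); state the induction on that quantity and the argument closes cleanly.

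Second, your diamond-lemma setup as written does not have the claimed monomials as its irreducibles. With the five $q$-commutations oriented toward the order $a<b<c<d$ together with the single rule $ad\to 1+qbc$, the word $abd$ contains none of your left-hand sides and is therefore irreducible, yet it is not of the form $a^{n}b^{m}c^{r}$ or $b^{m}c^{r}d^{s}$. The fix is to include \emph{both} replacement rules $ad\to 1+qbc$ and $da\to 1+q^{-1}bc$ and to choose a deg-lex order in which $a$ and $d$ each dominate $b$ and $c$ (for instance $b<c<d<a$). Then the irreducible words are exactly $b^{m}c^{r}d^{s}$ and $b^{m}c^{r}a^{n}$, which up to the harmless scalar $q^{\pm n(m+r)}$ is the stated basis. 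The overlap ambiguities you list then do resolve, and your identification of the mixed overlaps as the substantive checks is correct. Your alternative via a faithful representation on $\ell^{2}(\bN_{0})\otimes\ell^{2}(\bZ)$ is also a legitimate and arguably quicker route to linear independence.
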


Recall that for each $l \in \half \bN_0$, there is a 
unique (up to unitary equivalence) irreducible corepresentation $V_l$ of 
the coalgebra $\A$ of dimension $2l+1$, and that $\A$ is
cosemisimple. That is, if we fix a vector space basis in
each of the $V_l$ and denote by $t^l_{i,j} \in \A$ the corresponding
matrix coefficients, then we have the following analogue of the
Peter-Weyl theorem.


\begin{thm}[{\cite[Theorem 4.13]{KS}}] \label{thm_rep_basis}
Let 
$ I_{l} := \{ -l, -l+1, \ldots, l-1,l \} $.
Then the set $\{ \re{l}{i}{j} \ | \ l \in
 \half \bN_{0}, \ i, j \in I_{l} \}$ is a vector space basis of $\A$.
\end{thm}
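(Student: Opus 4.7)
The plan is to assemble the statement from the two ingredients explicitly recalled immediately before the theorem: (i) the classification of irreducible corepresentations of $\cA$ by a highest weight $l \in \half\bN_0$ with $\dim V_l = 2l+1$, and (ii) cosemisimplicity of $\cA$.

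First I would establish spanning. By cosemisimplicity, $\cA$ decomposes as a direct sum $\cA = \bigoplus_V C(V)$ of the coefficient subcoalgebras $C(V) := \linspan\{t^V_{i,j}\}$ of finite dimensional corepresentations $V$, where the sum ranges over representatives of equivalence classes of irreducibles. Since equivalent corepresentations give the same subcoalgebra, the classification identifies this sum with $\bigoplus_{l \in \frac12\bN_0} C(V_l)$. By construction $C(V_l) = \linspan\{\re{l}{i}{j} : i,j \in I_l\}$, so the $\re{l}{i}{j}$ span $\cA$.

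For linear independence, the standard tool is Schur-type orthogonality for the Haar state $h$ of $SU_q(2)$. Matrix coefficients of inequivalent irreducibles pair to zero under $(x,y) \mapsto h(x^*y)$ (or more generally under the convolution pairing), so the subspaces $C(V_l)$ sit as a genuine direct sum in $\cA$. Within a single $C(V_l)$, the $(2l+1)^2$ coefficients $\re{l}{i}{j}$ are linearly independent: any nontrivial relation $\sum c_{ij}\re{l}{i}{j}=0$ would, after translating via the comultiplication, force a nontrivial $U_q(\mathfrak{sl}_2)$-invariant subspace of $V_l$, contradicting irreducibility. (Alternatively one invokes non-degeneracy of the quantum-dimension-weighted orthogonality matrix $(h(\rea{l}{i}{j}\re{l}{k}{n}))$, which is explicitly diagonal with nonzero entries proportional to powers of $q$.)

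The principal obstacle is really the representation-theoretic input (i): one has to produce enough irreducible corepresentations and verify that the list is exhaustive. The standard route uses the duality between $\cO(SU_q(2))$ and $U_q(\mathfrak{sl}_2)$: construct the fundamental $2$-dimensional corepresentation from the defining matrix $\left(\begin{smallmatrix}a & b \\ c & d\end{smallmatrix}\right)$, form tensor powers, and decompose them using the explicit action of the generators $E,F,K$ of $U_q(\mathfrak{sl}_2)$, reproducing the classical Clebsch--Gordan rule. Once this is in hand, the Peter--Weyl-type basis theorem follows purely formally from the coalgebraic argument above; the genuine content is the (by now well-known) fact that the representation theory of $U_q(\mathfrak{sl}_2)$ for generic $q$ mirrors that of $\mathfrak{sl}_2$.
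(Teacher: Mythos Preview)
The paper does not actually prove this theorem: it is quoted verbatim from \cite[Theorem~4.13]{KS} and used as background without any argument. So there is nothing to compare your proposal against on the paper's side.

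That said, your outline is the standard and correct one. Cosemisimplicity gives the coalgebra decomposition $\A \cong \bigoplus_l C(V_l)$, and the Schur-type orthogonality relations for the Haar state (which you mention, and which the paper in fact records explicitly a few lines later in the form $\la \re{l}{i}{j}, \re{l'}{i'}{j'} \ra = \delta_{l,l'}\delta_{i,i'}\delta_{j,j'} q^{-2i}[2l+1]_q^{-1}$) give linear independence directly. Your parenthetical argument for independence within a fixed $C(V_l)$ via irreducibility is also fine, though the orthogonality relations make it unnecessary. You are right that the substantive input is the classification of irreducible corepresentations, which is classical for $U_q(\mathfrak{sl}_2)$ at generic $q$; the paper simply assumes this from \cite{KS}.
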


This will be referred to as the
Peter-Weyl basis. With a suitable choice of basis in
$V_\half$, one has
\envaln{
a &= \re{\half}{-\half}{-\half}, & b &= \re{\half}{-\half}{\half}, & c &= \re{\half}{\half}{-\half}, 
& d &= \re{\half}{\half}{\half}.
}
The expressions
for the Peter-Weyl basis elements as
linear combinations of the polynomial 
basis elements can be found in
\cite[Section 4.2.4]{KS}.

The quantized universal enveloping algebra $U_{q}(\mathfrak{sl}(2))$ is a 
Hopf algebra which is generated by $k, k^{-1}, e, f$ with relations

\enveqn{
kk^{-1} = k^{-1}k = 1, \quad kek^{-1} = qe, \quad kfk^{-1} = q^{-1}f, 
\quad [e, f] = \frac{k^{2} - k^{-2}}{q - q^{-1}}.
}

Note that in \cite{KS} this algebra is
denoted by 
$\breve{U}_{q}(\mathrm{sl}_{2})$ 
and $U^\mathrm{ext}_{q}(\mathrm{sl}_{2})$. The algebra
$U_{q}(\mathfrak{sl}(2))$ carries the 
following Hopf structure

\envaln{
\Delta(k) = k \ox k, \quad \Delta(e) = e \ox k &+ k^{-1} \ox e, \quad \Delta(f) = f \ox k + k^{-1} \ox f \\
S(k) = k^{-1}, \quad S(e) &= -qe, \quad S(f) = -q^{-1}f \\
\varepsilon(k) = 1, \quad & \varepsilon(e) = \varepsilon(f) = 0.
}

Adding the following involution

\enveqn{
k^{\ast} = k, \quad e^{\ast} = f, \quad f^{\ast} = e
}

we obtain a Hopf $\ast$-algebra which 
we denote by $U_{q}(\mathfrak{su}(2))$. 

\begin{thm}[{\cite[Theorem 4.21]{KS}}]
\label{theorem_dual_pairing}
There exists a unique dual pairing $\la \cdot , \cdot \ra$ of the 
Hopf algebras $U_{q}(\mathfrak{sl}(2))$ and $\A$ such that

\envaln{
\la k, a \ra = q^{-\half}, &\quad \la k, d \ra = q^{\half}, \quad \la e, c \ra = \la f, b \ra = 1 \\
\la k, b \ra = \la k, c \ra = \la e, a \ra = &\la e, b \ra = \la e, d \ra = \la f, a \ra = \la f, c \ra = \la f, d \ra = 0.
}
This pairing is compatible with the $*$-structures on $U_{q}(\mathfrak{sl}(2))$ and $\A$, \cite[Chapter 1]{KS}.
\end{thm}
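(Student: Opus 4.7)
I would separate uniqueness from existence. For uniqueness, the bialgebra pairing axioms
\enveqn{
\la uv, x \ra = \la u \ox v, \Delta(x) \ra, \quad \la u, xy \ra = \la \Delta(u), x \ox y \ra, \quad \la 1, x \ra = \eps(x), \quad \la u, 1 \ra = \eps(u)
}
together with the prescribed values on the generators $k^{\pm 1}, e, f$ of $U_{q}(\mathfrak{sl}(2))$ and $a, b, c, d$ of $\A$ determine $\la u, x \ra$ recursively on arbitrary monomials, which span both algebras. Hence at most one such pairing can exist.

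For existence the natural route is via representation theory. First, construct the fundamental representation $\rho_{\half} : U_{q}(\mathfrak{sl}(2)) \to M_{2}(\C)$ whose matrix entries at $k^{\pm 1}, e, f$ match the prescribed values of $\la \cdot, a \ra, \la \cdot, b \ra, \la \cdot, c \ra, \la \cdot, d \ra$, and verify directly that these matrices satisfy the defining relations of $U_{q}(\mathfrak{sl}(2))$. The well-known representation theory of $U_{q}(\mathfrak{sl}(2))$ then provides, for each $l \in \half \bN_{0}$, an irreducible representation $\rho_{l} : U_{q}(\mathfrak{sl}(2)) \to \End(V_{l})$, and we set $\la u, \re{l}{i}{j} \ra := \rho_{l}(u)_{ij}$, extending linearly over the Peter-Weyl basis (Theorem \ref{thm_rep_basis}).

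The bialgebra pairing axioms then follow from two facts: each $\rho_{l}$ is an algebra homomorphism, giving multiplicativity in the first argument via $\rho_{l}(uv)_{ij} = \sum_{m} \rho_{l}(u)_{im} \rho_{l}(v)_{mj}$; and the coproduct of a matrix coefficient satisfies $\Delta(\re{l}{i}{j}) = \sum_{m} \re{l}{i}{m} \ox \re{l}{m}{j}$, giving the corresponding multiplicativity in the second argument. Compatibility with the involutions amounts to checking an identity such as $\la u^{\ast}, x \ra = \overline{\la u, S(x)^{\ast} \ra}$ on the generators, which is a short finite computation using the defining values.

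The main technical obstacle is bookkeeping: one must verify that the proposed fundamental matrices satisfy the $U_{q}(\mathfrak{sl}(2))$ relations, and that the induced pairing is consistent with the six defining relations of $\A$. Since the generators $a, b, c, d$ are themselves matrix coefficients of the fundamental corepresentation $V_{\half}$, this reduces to elementary matrix computation together with the Clebsch-Gordan decomposition $V_{\half} \ox V_{l} \cong V_{l - \half} \oplus V_{l + \half}$ to control how higher Peter-Weyl basis elements arise from products of $a, b, c, d$ and hence inherit consistent pairing values.
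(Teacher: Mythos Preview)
The paper does not give its own proof of this theorem; it is simply quoted from Klimyk--Schm\"udgen \cite[Theorem~4.21]{KS} as background, with the $*$-compatibility referred to \cite[Chapter~1]{KS}. So there is no argument in the paper to compare your proposal against.

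Your outline is a reasonable sketch of the standard proof. One small caution on the existence part: defining $\la u, \re{l}{i}{j}\ra := \rho_l(u)_{ij}$ on the Peter--Weyl basis and then checking multiplicativity in the $\A$-variable forces you through the full Clebsch--Gordan bookkeeping, since products $\re{l}{i}{j}\re{l'}{i'}{j'}$ are not single Peter--Weyl basis elements. The route actually taken in \cite{KS} is more direct: one checks that the prescribed values on generators respect the defining relations of both $U_q(\mathfrak{sl}(2))$ and $\A$ (a finite computation), which establishes existence without invoking the higher representation theory. Either approach works, but the relation-checking one is shorter and avoids the apparent circularity of using the $V_l$ before the pairing is in hand.
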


The dual pairing between the Hopf algebras 
$\la \cdot, \cdot \ra \colon U_{q}(\mathfrak{sl}(2)) \times \A \rightarrow \bC$ 
defines left and right actions of $U_{q}(\mathfrak{sl}(2))$ on $\A$. 
Using Sweedler notation ($\Delta(x) = \sum x_{(1)} \ox x_{(2)}$) these actions are given by

\envaln{
g \triangleright x &:= \sum x_{(1)} \la g, x_{(2)} \ra & x \triangleleft g &:= \sum x_{(2)} \la g, x_{(1)} \ra, & &
\text{for all} \ \ x \in \A, \ g \in U_{q}(\mathfrak{sl}(2)).
}

The left and right actions make $\A$ a $U_{q}(\mathfrak{sl}(2))$-bimodule 
\cite[Proposition 1.16]{KS}. 

Our definition of  the $q$-numbers is

\enveqn{
[a]_{q} := \frac{q^{-a} - q^{a}}{q^{-1} - q}=Q(q^{-a} - q^{a}) \qquad \text{for any} \ a \in \bC,
}

where we abbreviated 
$Q := (q^{-1} - q)^{-1} \in (0, \infty)$.
The following lemma recalls the explicit formulas for the action of the generators on the
Peter-Weyl basis.

\begin{lemma}
For all $n \in \bZ$,

\envaln{
k^{n} \triangleright \re{l}{i}{j} &= q^{nj} \re{l}{i}{j} & \re{l}{i}{j} \triangleleft k^{n} &= q^{ni} \re{l}{i}{j} \\
e \triangleright \re{l}{i}{j} &=  \sqrt{\left[ l + \thalf \right]_{q}^{2} - \left[ j + \thalf \right]_{q}^{2} } \,\,\re{l}{i}{j+1} & f \triangleright \re{l}{i}{j} &= \sqrt{\left[ l + \thalf \right]_{q}^{2} - \left[ j - \thalf \right]_{q}^{2} } \,\, \re{l}{i}{j-1}.
}
%

\end{lemma}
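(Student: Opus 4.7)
The proof reduces, via the definition of the $U_q(\mathfrak{sl}(2))$-actions, to computing the pairings $\la g, \re{l}{m}{j}\ra$. The key observation is that the $\re{l}{i}{j}$ are matrix coefficients of the irreducible corepresentation $V_l$ of $\A$, so their comultiplication takes the form
\[
\Delta(\re{l}{i}{j}) = \sum_{m \in I_l} \re{l}{i}{m} \ox \re{l}{m}{j}.
\]
Feeding this Sweedler decomposition into the definitions of $\triangleright$ and $\triangleleft$ yields
\[
g \triangleright \re{l}{i}{j} = \sum_{m \in I_l} \re{l}{i}{m}\, \la g, \re{l}{m}{j}\ra, \qquad \re{l}{i}{j} \triangleleft g = \sum_{m \in I_l} \re{l}{m}{j}\, \la g, \re{l}{i}{m}\ra,
\]
so the coefficients I need are exactly the matrix entries of $g$ in the $(2l+1)$-dimensional irreducible representation $\rho_l$ of $U_q(\mathfrak{sl}(2))$ determined by the pairing of Theorem \ref{theorem_dual_pairing}.

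For $g = k^n$, the grouplikeness $\Delta(k) = k \ox k$ combined with the base values $\la k, a\ra = q^{-\half}$ and $\la k, d\ra = q^{\half}$ identifies $\rho_\half(k)$ with the diagonal matrix $\mathrm{diag}(q^{-\half}, q^{\half})$ in the chosen weight basis of $V_\half$. Realising $V_l$ inductively as the highest-weight summand of $V_\half^{\ox 2l}$ gives $\rho_l(k) v_j = q^{j} v_j$, hence $\la k^n, \re{l}{m}{j}\ra = q^{nj}\dl_{m,j}$, and substitution recovers the stated left and right actions of $k^n$.

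For $g = e$, the standard weight-basis representation theory of $U_q(\mathfrak{sl}(2))$ gives $\rho_l(e) v_j = \sqrt{[l-j]_q[l+j+1]_q}\, v_{j+1}$, so that $\la e, \re{l}{m}{j}\ra = \sqrt{[l-j]_q[l+j+1]_q}\,\dl_{m,j+1}$. An elementary calculation, obtained by expanding both sides in powers of $q^{\pm 1}$, yields the identity
\[
[l-j]_q [l+j+1]_q = \left[l+\thalf\right]_q^{\!2} - \left[j+\thalf\right]_q^{\!2},
\]
which rewrites the coefficient in the symmetric form appearing in the lemma. The case $g = f$ is strictly analogous via $\rho_l(f) v_j = \sqrt{[l+j]_q[l-j+1]_q}\, v_{j-1}$ and the companion identity $[l+j]_q[l-j+1]_q = [l+\thalf]_q^{\!2} - [j-\thalf]_q^{\!2}$. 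The only genuine subtlety is the basis normalisation: the symmetric square roots are valid precisely when the $\re{l}{i}{j}$ are taken with respect to an orthonormal weight basis, and this is forced by the $\ast$-compatibility of the pairing in Theorem \ref{theorem_dual_pairing} (which makes $\rho_l(f)$ the adjoint of $\rho_l(e)$), matching the convention of the explicit formulas in \cite[Section 4.2.4]{KS}.
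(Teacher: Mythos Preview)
The paper states this lemma without proof; it is treated as standard background from \cite{KS}, where the explicit formulas for the Peter-Weyl basis and the $U_q(\mathfrak{sl}(2))$-action are given in Section~4.2.4. Your argument is a correct and self-contained derivation: reducing to the pairings $\la g,\re{l}{m}{j}\ra$ via the comultiplication formula for matrix coefficients, identifying these pairings with the matrix entries of the irreducible representation $\rho_l$, and then invoking the standard weight-basis formulas together with the $q$-number identity $[l-j]_q[l+j+1]_q=[l+\tfrac{1}{2}]_q^2-[j+\tfrac{1}{2}]_q^2$. Your remark on the normalisation (the symmetric square roots require an orthonormal weight basis, forced by the $*$-compatibility of the pairing) is exactly the point that fixes the otherwise basis-dependent off-diagonal coefficients, and matches the conventions used in the paper.
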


Later we will use the notation

\enveqn{
\pk := k \triangleright \cdot\,, \qquad \pe := e \triangleright \cdot\,, \qquad \pf := f \triangleright \cdot\,,
}

especially when we extend these
operators from $\A$ to
suitable completions.
Also observe that 
$\Delta(k^{n}) = k^{n} \ox k^{n}$ for all $n \in \bZ$, hence $k^{n} \triangleright \cdot$ 
and $\cdot \triangleleft k^{n}$ are
algebra automorphisms on
$\A$. They are not $*$-algebra automorphisms since for $\alpha\in\A$ we have
$(k \triangleright \alpha)^*=k^{-1} \triangleright
\alpha^*,\ (\alpha \triangleleft k)^*=\alpha^*
\triangleleft k^{-1}$. Finally, we 
introduce
$$
\partial_{H}(\re{l}{i}{j}) =  j \re{l}{i}{j},
$$
and we note that formally $\partial_k=q^{\partial_H}$.

\subsection{The GNS representation for the Haar state}

We denote by $A:=C^*(SU_q(2))$ the
universal $C^*$-completion of the
$*$-algebra $\A$ \cite[Section~4.3.4]{KS}.
Let $h$ be the Haar state
of $A$ whose values on basis
elements are

\enveqn{
 h(a^{i}b^{j}c^{k}) = h(d^{i}b^{j}c^{k}) = \delta_{i, 0} \delta_{j, k} (-1)^{k} [k+1]_q^{-1}, \quad
h(\re{l}{i}{j}) = \delta_{l0}.
}

Let $\cH_{h}$ denote the GNS space $L^{2}(A, h)$, 
where the inner product $\la x, y \ra = h(x^{\ast} y)$ is conjugate linear in the first variable. 
The representation of $A$ on $\cH_{h}$ is is induced by left
multiplication in $A$. 
The set $\{ \re{l}{i}{j} \ | \ l \in
\half \bN_{0}, \ i, j \in I_{l} \}$ is
an orthogonal basis for $\H_h$ with

\enveqn{
\la \re{l}{i}{j}, \re{l'}{i'}{j'} \ra = \delta_{l,l'} \delta_{i,i'} \delta_{j,j'} q^{-2i} [2l+1]_{q}^{-1}.
}


\subsection{Modular Theory}
Following Woronowicz, we call the automorphism
$$
		  \vartheta (\alpha) := k^{-2}
		  \triangleright \alpha \triangleleft
		  k^{-2},\quad
		  \alpha \in \A 
$$
the modular automorphism of $\A$.
The action of $\vartheta$ on the
generators of $\A$ and the Peter-Weyl 
basis  is given by

\enveqn{
\vartheta(a) = q^{2}a, \ \ \vartheta(b) = b, \ \ \vartheta(c) = c, \ \ \vartheta(d) = q^{-2}d, \ \ 
\vartheta(\re{l}{r}{s}) = q^{-2(r+s)} \re{l}{r}{s}.
}

The modular automorphism is a (non $\ast$-) algebra automorphism; more precisely for any 
$\alpha \in \A$

\enveqn{
\vartheta(\alpha)^{\ast} = \vartheta^{-1}(\alpha^{\ast}).
}

The Haar state is related to the modular automorphism by the following proposition.

\begin{prop}[{\cite[Proposition 4.15]{KS}}]
\label{prop_haar_twisted}
For $\alpha,\,\beta \in \A$, we have $h(\alpha\beta) = h(\vartheta(\beta)\alpha)$.
\end{prop}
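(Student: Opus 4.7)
The plan is to verify the identity on the Peter-Weyl basis of $\A$ and extend by bilinearity. Setting $\alpha = \re{l}{i}{j}$ and $\beta = \re{l'}{i'}{j'}$, the relation $\vartheta(\re{l'}{i'}{j'}) = q^{-2(i'+j')}\re{l'}{i'}{j'}$ recorded above reduces the claim to
\begin{equation*}
h(\re{l}{i}{j}\,\re{l'}{i'}{j'}) = q^{-2(i'+j')}\, h(\re{l'}{i'}{j'}\,\re{l}{i}{j}).
\end{equation*}
Since $h$ annihilates every Peter-Weyl basis vector other than $\re{0}{0}{0}=1$, both sides are determined by the coefficient of $1$ in the respective product.

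The Clebsch-Gordan decomposition $V_l \otimes V_{l'} \cong \bigoplus_{k=|l-l'|}^{l+l'} V_k$ produces a copy of the trivial corepresentation $V_0$ precisely when $l = l'$, so both sides vanish unless $l=l'$. For $l=l'$ I would use the orthogonality relation
\begin{equation*}
h((\re{l}{i}{j})^*\,\re{l'}{i'}{j'}) = \delta_{l,l'}\,\delta_{i,i'}\,\delta_{j,j'}\,q^{-2i}\,[2l+1]_q^{-1}
\end{equation*}
already recorded in this section, together with the explicit formula from \cite[Section 4.2.4]{KS} which expresses $(\re{l}{i}{j})^*$ as an explicit sign times a $q$-power times $\re{l}{-i}{-j}$. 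Inverting this to write $\re{l}{i}{j}$ as a scalar multiple of $(\re{l}{-i}{-j})^*$ and substituting into both $h(\re{l}{i}{j}\,\re{l}{i'}{j'})$ and $h(\re{l}{i'}{j'}\,\re{l}{i}{j})$ turns each into an instance of the orthogonality pairing, yielding explicit constants $C_1, C_2$ both supported on the locus $i'=-i$, $j'=-j$. The identity then reduces to the equality $C_1 = q^{2(i+j)} C_2$ of explicit $q$-powers and signs on this support.

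The main obstacle is the careful bookkeeping of the sign and $q$-power conventions coming from the formula for $(\re{l}{i}{j})^*$, which depend on the basis choices inside each $V_l$ fixed in \cite{KS}. A sanity check at low rank---$h(ad) = (1+q^2)^{-1} = q^{-2}h(da)$, which follows from $ad = 1+qbc$, $da = 1+q^{-1}bc$, and $h(bc) = -q(1+q^2)^{-1}$---verifies the identity for $(\alpha,\beta) = (a,d)$ and pins down the correct conventions. An alternative that avoids the basis-level calculation is to invoke the general Woronowicz theorem that the Haar state on any compact matrix quantum group is a twisted trace whose modular automorphism is implemented by a distinguished group-like element in the dual Hopf algebra, and then to identify this element with $k^{-2}$ on $SU_q(2)$ by checking on generators via Theorem \ref{theorem_dual_pairing}, thereby recovering $\vartheta = k^{-2} \triangleright \cdot \triangleleft k^{-2}$ directly.
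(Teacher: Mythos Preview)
The paper does not prove this proposition at all: it is stated with a citation to \cite[Proposition~4.15]{KS} and no argument is given. So there is nothing in the paper to compare your proposal against.

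Your sketch is a reasonable outline of the standard proof (and is essentially the approach taken in \cite{KS}): reduce to Peter--Weyl basis elements, use that $h$ picks out only the trivial corepresentation, and then appeal to the explicit orthogonality relations together with the formula for $(\re{l}{i}{j})^\ast$. The alternative you mention via Woronowicz's general result is also standard. The only caveat is that what you have written is a plan rather than a proof: you correctly identify that the verification comes down to matching explicit $q$-powers and signs, but you do not actually carry this out. If you want a self-contained argument you will need to write down the formula $(\re{l}{i}{j})^\ast = (-q)^{j-i}\,\re{l}{-i}{-j}$ (in the conventions of \cite{KS}) and push the computation through; your sanity check on $h(ad)$ versus $h(da)$ is exactly the right sort of verification and already confirms the claim in the case $l=\tfrac{1}{2}$.
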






In fact, $h$ extends to a KMS state on
$A$ for the strongly
continuous one-parameter group
$\vartheta_{t}$, $t \in \mathbb{R}$, of 
$*$-automorphisms of
$A$ 
which is given on the generators by

\envaln{
\vartheta_{t}(a) &:= q^{-2it} a\,, &
\vartheta_{t}(b) &:= b\,, &
\vartheta_{t}(c) &:= c\,, &
\vartheta_{t}(d) &:= q^{2it} d\,.
}

We extend this to an 
action $\vartheta_{\cdot}\, \colon \bC \times \A \rightarrow \A$ 
by algebra (not $\ast$-) automorphisms that is defined on generators
by
\envaln{
\vartheta_{z}(a) &:= q^{-2iz} a\,, &
\vartheta_{z}(b) &:= b\,, &
\vartheta_{z}(c) &:= c\,, &
\vartheta_{z}(d) &:= q^{2iz} d\,,
}

so that the modular automorphism
$\vartheta$ is $\vartheta_i$.

%



We can implement $\vartheta_t$ in
the GNS representation on $\cH_h$.
To do this, we define an unbounded
linear operator $\Delta_{F}$
on $\A\subset \cH_h$  by

\enveqn{
\Delta_{F}(\re{l}{i}{j}) := q^{2i+2j} \re{l}{i}{j}
}

and call this the full modular
operator. Then we have

\enveqn{
\vartheta_{t}(x) \xi = \Delta_F^{it} x
\Delta_F^{-it} \xi\,, \qquad \text{for all}
\ x \in A \ \
\text{and} \ \xi \in \cH_h. 
}

The subscript $F$ denotes that this operator is 
associated to the full modular
automorphism $\vartheta$. In addition,
we define the left and the right
modular operators on $\A\subset \H_h$ by

\envaln{
\Delta_{L}(\re{l}{i}{j}) &:= q^{2j} \re{l}{i}{j}, & \Delta_{R}(\re{l}{i}{j}) &:= q^{2i} \re{l}{i}{j},
}

so  $\Delta_{F} =
\Delta_{L} \Delta_{R} = \Delta_{R}
\Delta_{L}$. Just as $\Delta_F$
implements the modular automorphism
group, the left and right modular
operators implement one-parameter groups
of automorphisms of $A$:

\envaln{
& \sigma_{L, t}(\re{l}{r}{s}) = q^{2its} \re{l}{r}{s} = \Delta_{L}^{it} \re{l}{r}{s} \Delta_{L}^{-it}\,, & & \sigma_{R, t}(\re{l}{r}{s}) = q^{2itr} \re{l}{r}{s} = \Delta_{R}^{it} \re{l}{r}{s} \Delta_{R}^{-it}.
}

As with the full action, the left and
right actions are periodic and hence
give rise to actions of $\T$ on $A$.
These
may be extended to a complex action 
on the $\ast$-subalgebra $\A$ which we will 
denote $\sigma_{L, z}$ and $\sigma_{R,
z}$. In particular, we obtain for $z=i$
the algebra automorphisms

\envaln{
\sigma_{L} &:= k^{-2} \triangleright \cdot & \sigma_{R} &:= \cdot \triangleleft k^{-2} & \vartheta &= \sigma_{L} \sigma_{R} = \sigma_{R} \sigma_{L} \\
& & \sigma_{L}(\re{l}{r}{s}) = q^{-2s} \re{l}{r}{s} \qquad & \qquad \sigma_{R}(\re{l}{r}{s}) = q^{-2r} \re{l}{r}{s}\\
\vartheta(\alpha) \xi &= \Delta_{F}^{-1} \alpha \Delta_{F} \xi &
\sigma_{L}(\alpha) \xi &= \Delta_{L}^{-1} \alpha \Delta_{L} \xi &
\sigma_{R}(\alpha) \xi &= \Delta_{R}^{-1} \alpha \Delta_{R} \xi.
}

The fixed point algebra for the left
action on $\A$ is isomorphic to the 
standard Podle\'s quantum 2--sphere
$\cO(S_{q}^{2})$. 
We will denote its $C^{\ast}$-completion
by $B$.
As the left action is periodic, we may define a positive faithful expectation 
$\Phi \colon A \rightarrow B$ by

\enveqn{
\Phi(x) = \frac{\ln(q^{-2})}{2\pi } \int_{0}^{2\pi/\ln (q^{-2})}
\sigma_{L, t}(x) dt.
}

More generally, given $n \in \bZ$ and $x \in A$ we define

\enveqn{
\Phi_{n}(x) = \frac{\ln(q^{-2})}{2\pi } \int_{0}^{2\pi/\ln(q^{-2})} t^{-n} \sigma_{L, t}(x) dt.
}

Since $\sigma_{L, t}$ is a strongly
continuous action on $A$, the $\Phi_{n}$ are 
continuous maps on $A$. Observe that $\Phi = \Phi_{0}$ and

\enveqn{
\Phi_{n}(\re{l}{i}{j}) = \delta_{n, 2j} \re{l}{i}{j}
}

Hence the $\Phi_{n}$ can be extended to bounded operators on the GNS space $\cH_{h}$, 
and in fact the $\Phi_{n}$ are projections onto the spectral subspaces of the left circle action. 
So we make explicit the decomposition of $A$ into the left spectral subspaces by defining

\envaln{
		  B_{n} &:= \Phi_{n}(A) = \{ \alpha \in A \ | \ \sigma_{L, t}(\alpha) = q^{2int} \alpha \} \quad\mbox{and}\quad
\cH_{n} := L^{2}(B_{n}, h)
}

where $h$ is the Haar state (restricted to $B_{n}$). This leads to the following decomposition 
for the GNS space

\enveqn{
\cH_{h} = \bigoplus_{n = -\infty}^{\infty} \cH_{n}.
}

The commutation relations for the projections $\Phi_{n}$ and the operators 
$\pk$, $\pe$ and $\pf$ are found from the definitions on the Peter-Weyl basis to be

\envaln{
\pk \Phi_{n} &= \Phi_{n} \pk =
q^{\frac{n}{2}} \Phi_{n} 
& \partial_{H} \Phi_{n} &= \Phi_{n} \partial_{H} = \frac{n}{2} \Phi_{n}
& \Delta_{L} \Phi_{n} &= \Phi_{n} \Delta_{L} = q^{n} \Phi_{n} \\
\pe \Phi_{n} &= \Phi_{n+2} \pe & \pf \Phi_{n} &= \Phi_{n-2} \pf\,.
}

The left actions of $e$ and $f$ are twisted derivations in the sense that for $\alpha,\,\beta \in \A$

\envaln{
\pe(\alpha\beta) &= \pe(\alpha) \pk(\beta) + \pki(\alpha) \pe(\beta) \\
\pf(\alpha\beta) &= \pf(\alpha) \pk(\beta) + \pki(\alpha) \pf(\beta)\,.
}

More generally, given $\alpha \in A$ and $\xi \in \cH_{h}$

\enval{
\pe(\alpha \xi) &= \pe(\alpha) \Delta_{L}^{\half} \xi + \sigma_{L}^{\half}(\alpha) \pe(\xi) &
\pf(\alpha \xi) &= \pf(\alpha) \Delta_{L}^{\half} \xi + \sigma_{L}^{\half}(\alpha) \pf(\xi) \label{eqn_twisted_ef} \\
&= \pe(\alpha) \Delta_{L}^{\half} \xi + \Delta_{L}^{-\half} \alpha \Delta_{L}^{\half} \pe(\xi) &
&= \pf(\alpha) \Delta_{L}^{\half} \xi + \Delta_{L}^{-\half} \alpha \Delta_{L}^{\half} \pf(\xi). \nonumber
}

See e.g.~\cite{scream} and the references therein 
for background on the generalisation of this setting in terms of
Hopf-Galois extensions.

\section{Twisted homology and cohomology}
\label{sec:homology}
We recall that the algebra $\A$ is a $\vartheta^{-1}$-twisted
Calabi-Yau algebra of dimension 3,
see~\cite{HK2} and the references therein
for this result and some background. Since the centre of $\A$
consists only of the scalar multiples of
$1_\A$, this means 
in particular that the
cochain complex 
$C^\bullet:=\mathrm{Hom}_\mathbb{C}
(\A^{\otimes_\mathbb{C}
\bullet+1},\mathbb{C})$, with differential
$b_{\vartheta^{-1}} : C^n \rightarrow C^{n+1}$
given by
\envaln{
(b_{\vartheta^{-1}}\varphi)(a_{0}, \ldots, a_{n}, a_{n+1}) 
&= \sum_{i = 0}^{n} (-1)^{n} \varphi(a_{0}, \ldots, a_{i}a_{i+1}, \ldots, a_{n+1}) \\
& \quad + (-1)^{n+1} \varphi(\vartheta^{-1}(a_{n+1})a_{0}, a_{1}, \ldots, a_{n}),
}

is exact in degrees $n>3$ and has third cohomology 
$H^3(C,b_{\vartheta^{-1}}) \simeq \mathbb{C}$.
An explicit cocycle whose cohomology class generates
$H^3(C,b_{\vartheta^{-1}})$ can be
constructed using the following
incarnation of the cup product $\smallsmile$ in
Hochschild cohomology: 

\begin{lemma}\label{tuna}
Let 
$\sigma_0,\ldots,\sigma_3$
be automorphisms of $\A$,
$\int : \A \rightarrow \mathbb{C}$ be a
$\sigma_0 \circ \vartheta^{-1} \circ \sigma_3^{-1}$-twisted trace, that is, 
$$
		  \int \alpha\beta=\int \sigma_0(\vartheta^{-1}(\sigma_3^{-1}(\beta)))\alpha,
$$
and $\partial_i : \A \rightarrow \A$, $i=1,2,3$, 
be $\sigma_{i-1}$-$\sigma_i$-twisted
derivations, that is,
$$
		  \partial_i(\alpha\beta)=\sigma_{i-1}(\alpha)
		  \partial_i (\beta)+
		  \partial_i(\alpha) \sigma_i(\beta).
$$ 
Then the functional defined via the cup product by
$$
		  \left(\int \smallsmile \partial_1
		  \smallsmile \partial_2
		  \smallsmile
		  \partial_3\right)(a_0,a_1,a_2,a_3):=
		  \int \sigma_0(a_0) \partial_1(a_1)
		  \partial_2(a_2) \partial_3(a_3)
$$
is a $\vartheta^{-1}$-twisted cocycle, $b_{\vartheta^{-1}} (\int \smallsmile \partial_1
		  \smallsmile \partial_2
		  \smallsmile
		  \partial_3)=0$.
\end{lemma}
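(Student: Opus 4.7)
The proof is essentially a direct verification that the standard Hochschild coboundary of the proposed cup product cocycle vanishes, using the Leibniz rules for the twisted derivations and the twisted trace property in tandem. The plan is to expand $(b_{\vartheta^{-1}}\varphi)(a_0, a_1, a_2, a_3, a_4)$, where $\varphi:=\int \smallsmile \partial_1 \smallsmile \partial_2 \smallsmile \partial_3$, into its five summands and show that the resulting ten integrand contributions (after each Leibniz expansion) cancel in pairs.

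First I would write out each of the four ``face'' terms $(-1)^i \varphi(a_0,\ldots,a_ia_{i+1},\ldots,a_4)$ for $i=0,1,2,3$ together with the cyclic term $(-1)^4\varphi(\vartheta^{-1}(a_4)a_0, a_1,a_2,a_3)$. For $i=0$ the algebra automorphism property of $\sigma_0$ produces the single integrand
$\sigma_0(a_0)\sigma_0(a_1)\partial_1(a_2)\partial_2(a_3)\partial_3(a_4)$.
For $i=1,2,3$ the $\sigma_{i-1}$-$\sigma_i$-twisted Leibniz rule gives two integrands each, with the pattern $\sigma_{i-1}(a_i)\partial_i(a_{i+1})$ and $\partial_i(a_i)\sigma_i(a_{i+1})$ inserted in position $i$. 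A direct inspection shows that the $i=0$ term cancels the first half of the $i=1$ term; the second half of the $i=1$ term cancels the first half of the $i=2$ term; the second half of the $i=2$ term cancels the first half of the $i=3$ term. All three cancellations use only that the automorphisms $\sigma_i$ are placed ``in the right slot'' and that the overall sign is alternating, exactly as in the classical proof that the cup product descends to cohomology.

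After these cancellations only two terms survive: the second half of the $i=3$ face,
$-\int \sigma_0(a_0)\partial_1(a_1)\partial_2(a_2)\partial_3(a_3)\,\sigma_3(a_4)$,
and the cyclic term,
$+\int \sigma_0(\vartheta^{-1}(a_4))\,\sigma_0(a_0)\partial_1(a_1)\partial_2(a_2)\partial_3(a_3)$.
To close the argument I would apply the twisted trace identity $\int \alpha\beta=\int \sigma_0(\vartheta^{-1}(\sigma_3^{-1}(\beta)))\alpha$ to the first of these with $\alpha=\sigma_0(a_0)\partial_1(a_1)\partial_2(a_2)\partial_3(a_3)$ and $\beta=\sigma_3(a_4)$; the $\sigma_3^{-1}$ exactly undoes the $\sigma_3$ from the final Leibniz tail, leaving $\sigma_0(\vartheta^{-1}(a_4))$ on the left, which is precisely the cyclic term with the opposite sign.

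The main point to be careful about is purely bookkeeping: one must verify that the composition $\sigma_0\circ\vartheta^{-1}\circ\sigma_3^{-1}$ appearing in the twist of $\int$ is exactly the one that matches the ``last derivation'' $\partial_3$ and the ``cyclic'' twist $\vartheta^{-1}$ of the coboundary $b_{\vartheta^{-1}}$. No deep ingredient is needed beyond the hypotheses of the lemma, and the argument is formally identical to the standard proof that a cup product of a twisted trace with twisted derivations is a cocycle; the only subtlety is that the twists of the derivations must ``telescope'' correctly with the twist of $\int$, which is precisely what is encoded in the hypothesis that $\int$ is $\sigma_0\circ\vartheta^{-1}\circ\sigma_3^{-1}$-twisted.
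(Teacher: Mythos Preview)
Your proposal is correct and follows exactly the same approach as the paper's proof: expand the five terms of $b_{\vartheta^{-1}}\varphi$, apply the multiplicativity of $\sigma_0$ and the twisted Leibniz rules for $\partial_1,\partial_2,\partial_3$ to obtain a telescoping cancellation, and then use the $\sigma_0\circ\vartheta^{-1}\circ\sigma_3^{-1}$-twisted trace identity to cancel the two surviving terms. The paper presents this as a single displayed ``straightforward computation'' without the intermediate commentary, but the content is identical to what you describe.
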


\begin{proof}
This is a straightforward computation:
\begin{align*}
&\quad \left(b_{{\vartheta^{-1}}}\int \smallsmile \partial_1
		  \smallsmile \partial_2
		  \smallsmile
		  \partial_3\right)(a_0,a_1,a_2,a_3,a_4)\\
&=\ \int \sigma_0(a_0a_1) \partial_1(a_2)
		  \partial_2(a_3) \partial_3(a_4)
		  -\int \sigma_0(a_0) \partial_1(a_1a_2)
		  \partial_2(a_3) \partial_3(a_4)\\
&\quad\ +\int \sigma_0(a_0) \partial_1(a_1)
		  \partial_2(a_2a_3)
 \partial_3(a_4)
		  -\int \sigma_0(a_0) \partial_1(a_1)
		  \partial_2(a_2) \partial_3(a_3a_4)\\
&\quad\ +\int \sigma_0({{\vartheta^{-1}}}(a_4)a_0) \partial_1(a_1)
		  \partial_2(a_2) \partial_3(a_3)\\
&=\ -\int \sigma_0(a_0) \partial_1(a_1)
		  \partial_2(a_2) \partial_3(a_3)
 \sigma_3(a_4)
		  +\int \sigma_0({{\vartheta^{-1}}}(a_4)) \sigma_0(a_0) \partial_1(a_1)
		  \partial_2(a_2) \partial_3(a_3)\\
&=\  0.\qedhere 
\end{align*} 
\end{proof}

Less straightforward is that when
applying the above result with 
$$
		  \sigma_0=\sigma_1=k^{-4}
		  \triangleright \cdot\,,\quad
		  \sigma_2=k^{-2}
		  \triangleright \cdot\,,\quad
		  \sigma_3=\mathrm{id},
$$
$$
		  \partial_1=(k^{-4}
		  \triangleright \cdot) \circ \partial_H
		   ,\quad
		  \partial_2=(k^{-3} \triangleright
		  \cdot) \circ \partial_e,\quad
		  \partial_3=(k^{-1} \triangleright
		  \cdot) \circ \partial_f 
$$
and a suitable twisted trace, one obtains a
cohomologically nontrivial
$\vartheta^{-1}$-twisted cocycle.

\begin{lemma}[{\cite[Corollary~3.8]{HK2}}]\label{haddock}
Define a linear functional 
$\int_{[1]} : \A \rightarrow \mathbb{C}$ by
$$
		  \int_{[1]} a^{n} b^{m} c^{r} := 
		  \delta_{n,0} \delta_{m,0} \delta_{r,0},\quad 
		  \int_{[1]} b^{m} c^{r} d^{s} := 
		  \delta_{m,0} \delta_{r,0} \delta_{s,0}. 
$$
Then $\int_{[1]}$ is a $\sigma_{L}^{2} \circ \vartheta^{-1}$-twisted trace, and
the cochain $\varphi \in C^3$ given by
\enveqn{
\varphi(a_{0}, \ldots, a_{3}) = 
\int_{[1]} \left(k^{-4} \triangleright (a_{0} \,\partial_H( a_{1})) \right) \left(k^{-3} \triangleright \pe(a_{2}) \right) \left(k^{-1} \triangleright \pf(a_{3}) \right)
}
is a cocycle,
 $b_{\vartheta^{-1}} \varphi = 0$, whose
cohomology class is nontrivial, 
$b_{\vartheta^{-1}} \psi \neq \varphi$
 for all $\psi \in C^2$.
\end{lemma}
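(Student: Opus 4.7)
The strategy is to present $\varphi$ as a cup product of simpler twisted cochains and invoke Lemma \ref{tuna}, thereby reducing the bulk of the work to verifying the hypotheses on $\int_{[1]}$ and the $\partial_i$, and then to establish nontriviality by pairing against a Hochschild cycle.

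Set $\sigma_0 = \sigma_1 = k^{-4} \triangleright \cdot = \sigma_L^2$, $\sigma_2 = k^{-2} \triangleright \cdot = \sigma_L$, and $\sigma_3 = \mathrm{id}$. A direct computation gives $\sigma_0 \circ \vartheta^{-1} \circ \sigma_3^{-1} = \sigma_L^2 \circ \vartheta^{-1}$, so the first task is to check that $\int_{[1]}$ is a $\sigma_L^2 \circ \vartheta^{-1}$-twisted trace. Because $\sigma_L^2 \circ \vartheta^{-1}$ acts diagonally on the Peter-Weyl basis, the identity $\int_{[1]}(\alpha\beta) = \int_{[1]}((\sigma_L^2 \circ \vartheta^{-1})(\beta)\,\alpha)$ reduces to a finite check on products of polynomial-basis monomials, using the commutation relations $ab = qba$, $ad = 1+qbc$, etc., together with the explicit values of $\int_{[1]}$.

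Next I would verify the twisted Leibniz rule for each $\partial_i$. Since $k$ is group-like, $\pk = q^{\partial_H}$ is an algebra automorphism, and the one-parameter family it generates has $\partial_H$ as its infinitesimal generator; thus $\partial_H$ is an untwisted derivation. Post-composing with the algebra automorphism $k^{-4}\triangleright \cdot$ preserves this, so $\partial_1$ is a $\sigma_0$-$\sigma_1$-twisted derivation with $\sigma_0 = \sigma_1 = k^{-4}\triangleright \cdot$. For $\partial_2$ and $\partial_3$ I would apply $k^{-3}\triangleright\cdot$ (respectively $k^{-1}\triangleright\cdot$) to both sides of the twisted Leibniz rules for $\pe$ and $\pf$ stated earlier in Section \ref{sec:suq2}; using that each $k^n\triangleright\cdot$ is an algebra automorphism and that $k^{-n}\cdot k^{\pm 1} = k^{-n\pm 1}$, one reads off at once that $\partial_2$ is $\sigma_1$-$\sigma_2$-twisted and $\partial_3$ is $\sigma_2$-$\sigma_3$-twisted. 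Lemma \ref{tuna} then applies, and pulling the algebra automorphism $k^{-4}\triangleright\cdot$ across the product $a_0\,\partial_H(a_1)$ shows that the resulting cup-product cocycle agrees with the formula for $\varphi$ stated in the lemma.

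The remaining, and main, obstacle is nontriviality. Here I would exploit that $\A$ is twisted Calabi-Yau of dimension $3$, so that $H^3(C, b_{\vartheta^{-1}}) \simeq \mathbb{C}$ is one-dimensional and detected by pairing against any nonzero $\vartheta^{-1}$-twisted Hochschild $3$-cycle. Using the explicit generator $dvol \in \A^{\otimes 4}$ of the $\vartheta^{-1}$-twisted Hochschild homology constructed in \cite{HK2}, the pairing $\langle \varphi, dvol\rangle$ reduces, after applying $k^{-n}\triangleright\cdot$, $\partial_H$, $\pe$, and $\pf$ to the monomial factors of $dvol$, to a single explicit evaluation of $\int_{[1]}$ on one polynomial-basis monomial. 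Since coboundaries pair to zero with cycles, verifying that this number is nonzero rules out $\varphi = b_{\vartheta^{-1}}\psi$ and completes the proof; this concrete computation is precisely the content of \cite[Corollary 3.8]{HK2}, which I would invoke directly.
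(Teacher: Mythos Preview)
Your proposal is correct and follows exactly the route the paper takes: the paper states this lemma as a citation to \cite[Corollary~3.8]{HK2} and, in the paragraph preceding it, indicates that the cocycle property comes from applying Lemma~\ref{tuna} with precisely the choices $\sigma_0=\sigma_1=k^{-4}\triangleright\cdot$, $\sigma_2=k^{-2}\triangleright\cdot$, $\sigma_3=\mathrm{id}$ and $\partial_1,\partial_2,\partial_3$ as you list them. Your verification of the twisted Leibniz rules and your reduction of nontriviality to the pairing with $dvol$ (the paper records $\varphi(dvol)=1$ immediately after \eqref{eqn_dvol}) are exactly the missing details behind that citation.
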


Later, we will also have to consider the
cocycles that are obtained by using 
the (twisted) derivations
$\partial_H,\partial_e,\partial_f$ in a different
order. Explicitly,
this is handled by the following result.

\begin{lemma}\label{salmon}
In the situation of Lemma~\ref{tuna},
 define 
$$
		  \tilde \partial_3 =
		  \sigma_1 \circ \sigma_2^{-1}
 \circ \partial_3,\quad
		  \tilde \partial_2:=
		  \partial_2 \circ \sigma_2^{-1}
 \circ \sigma_3,\quad
		  \hat \partial_2:=\sigma_0 \circ
 \sigma_1^{-1} \circ \partial_2,\quad
		  \hat \partial_1:=\partial_1
 \circ \sigma_1^{-1} \circ \sigma_2.
$$
Then we have
\begin{align*}
		  \int \smallsmile \partial_1
 \smallsmile \partial_2 \smallsmile
 \partial_3 +
		  \int \smallsmile \partial_1
 \smallsmile \tilde \partial_3 \smallsmile
 \tilde \partial_2 
&= b_{\vartheta^{-1}} \psi_{132},\\
		  \int \smallsmile \partial_1
 \smallsmile \partial_2 \smallsmile
 \partial_3 +
		  \int \smallsmile \hat\partial_2
 \smallsmile \hat \partial_1 \smallsmile
 \partial_3 
&= b_{\vartheta^{-1}} \psi_{213},
\end{align*} 
where 
\begin{align*}
		  \psi_{132} (a_0,a_1,a_2)
&:=
		  \int \sigma_0(a_0)
		  \partial_1(a_1)
		  \partial_2
 (\sigma_2^{-1}(\partial_3(a_2))),\\
		  		 \psi_{213} (a_0,a_1,a_2)
&:=
		  -\int \sigma_0(a_0)
		  \partial_1 (\sigma_1^{-1}(\partial_2(a_1)))\partial_3(a_2)
		  .  
\end{align*} 
\end{lemma}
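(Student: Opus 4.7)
The plan is to prove both identities by direct algebraic computation analogous to the proof of Lemma~\ref{tuna}. In each case I would expand the coboundary $b_{\vartheta^{-1}}\psi_{132}$ (respectively $b_{\vartheta^{-1}}\psi_{213}$) as the alternating sum of four terms, apply the twisted Leibniz rule for $\partial_1,\partial_2,\partial_3$ inside each face $a_ia_{i+1}$, and finally use the $\sigma_0\circ\vartheta^{-1}\circ\sigma_3^{-1}$-twisted trace property of $\int$ on the cyclic term $\psi(\vartheta^{-1}(a_n)a_0,\ldots)$ exactly as in Lemma~\ref{tuna}: the identity $\int\sigma_0(\vartheta^{-1}(a))X=\int X\,\sigma_3(a)$ is the single nontrivial ingredient beyond the Leibniz rules.

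The preliminary step is to verify that the auxiliary maps really are twisted derivations with twists compatible with Lemma~\ref{tuna}. A short Leibniz computation gives that $\tilde\partial_3$ is $\sigma_1$-$(\sigma_1\sigma_2^{-1}\sigma_3)$-twisted and $\tilde\partial_2$ is $(\sigma_1\sigma_2^{-1}\sigma_3)$-$\sigma_3$-twisted; similarly $\hat\partial_2$ is $\sigma_0$-$(\sigma_0\sigma_1^{-1}\sigma_2)$-twisted and $\hat\partial_1$ is $(\sigma_0\sigma_1^{-1}\sigma_2)$-$\sigma_2$-twisted. In both rearranged sequences the starting automorphism is $\sigma_0$ and the terminal one is $\sigma_3$, so the twist of $\int$ matches and the new cup products $\int\smallsmile\partial_1\smallsmile\tilde\partial_3\smallsmile\tilde\partial_2$ and $\int\smallsmile\hat\partial_2\smallsmile\hat\partial_1\smallsmile\partial_3$ are well-defined $\vartheta^{-1}$-twisted cocycles in the sense of Lemma~\ref{tuna}.

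For the first identity, the Leibniz expansion of $\psi_{132}(a_0,a_1a_2,a_3)$ produces a sum of four terms (using Leibniz for $\partial_1$, $\partial_2$ and $\partial_3$ in succession), while $\psi_{132}(a_0a_1,a_2,a_3)$ and $\psi_{132}(a_0,a_1,a_2a_3)$ each split into two; the cyclic term, processed by the twisted trace as above, produces a single term carrying a trailing $\sigma_3(a_3)$. Of the eight resulting integrals, six cancel in pairs, and the two survivors are precisely $\int\sigma_0(a_0)\partial_1(a_1)\partial_2(a_2)\partial_3(a_3)$ and $\int\sigma_0(a_0)\partial_1(a_1)\sigma_1\sigma_2^{-1}\partial_3(a_2)\,\partial_2\sigma_2^{-1}\sigma_3(a_3)$, i.e.\ the two cup products on the right-hand side of the claimed identity. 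The second identity is handled in exactly the same way: the sign built into $\psi_{213}$ is there so that after the analogous expand-and-cancel procedure the coboundary delivers the positive sum of the two cup products.

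The main obstacle is purely bookkeeping. There is no conceptual subtlety, but the proliferation of twist factors $\sigma_i,\sigma_i^{-1}$ generated by iterated Leibniz rules must be tracked carefully; the precise compositions $\sigma_1\sigma_2^{-1},\sigma_2^{-1}\sigma_3,\sigma_0\sigma_1^{-1},\sigma_1^{-1}\sigma_2$ built into $\tilde\partial_i,\hat\partial_i$ are the unique choices that make these twist factors align so that six of the eight integrals cancel pairwise and the remaining two recombine into the desired cup product. Everything happens at the level of $\A$ with no analytic input.
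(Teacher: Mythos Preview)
Your approach is correct and is exactly what the paper does (its proof reads in full ``Straightforward computation''): expand $b_{\vartheta^{-1}}\psi$, apply the twisted Leibniz rules, and use the twisted trace on the cyclic face. One small bookkeeping slip: it is the face $\psi_{132}(a_0,a_1,a_2a_3)$ that produces four terms (Leibniz for $\partial_3$ followed by $\partial_2$ on the resulting product), whereas $\psi_{132}(a_0,a_1a_2,a_3)$ gives only two (Leibniz for $\partial_1$ alone) and $\psi_{132}(a_0a_1,a_2,a_3)$ gives one; your description swaps these counts, but the total of eight terms, the three pairwise cancellations, and the identification of the two survivors with the stated cup products are all correct.
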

\begin{proof}
Straightforward computation.
\end{proof}

Applying Lemma \ref{salmon} repeatedly to the cocycle
$\varphi$ from Lemma~\ref{haddock} gives cohomologous cocycles.

\begin{corl}\label{cor:perms}
The cocycle $\varphi$ from
Lemma~\ref{haddock} is cohomologous to each of
$$
		  \varphi_{132}
		  (a_0,a_1,a_2,a_3):=
		  -q^{-2} 
		  \int_{[1]}
		  \left(k^{-4} \triangleright (a_{0} \,
		  \partial_H ( a_{1})) \right)
		  \left(k^{-3} \triangleright
		  \pf(a_{2}) \right) 
		  \left(k^{-1} \triangleright \pe(a_{3}) \right),
$$
$$
		  \varphi_{213} (a_0,a_1,a_2,a_3):=
		  -\int_{[1]} 
		  \left(k^{-4} \triangleright
		  a_{0}\right)
		  \left(k^{-3} \triangleright
		  \partial_e(a_1)\right)
		  \left(k^{-2} \triangleright \partial_H (a_2)\right)
		  \left(k^{-1} \triangleright
		  \partial_f(a_3)\right),		  
$$
$$
		  \varphi_{312}
		  (a_0,a_1,a_2,a_3):=
		  q^{-2} 
		  \int_{[1]}
		  \left(k^{-4} \triangleright
		  a_{0}\right)
		  \left(k^{-3} \triangleright
		  \partial_f(a_1)\right)
		  \left(k^{-2} \triangleright
		  \partial_H ( a_2)\right)
		  \left(k^{-1} \triangleright
		  \partial_e(a_3)\right),		  
$$
$$
		 \varphi_{231} (a_0,a_1,a_2,a_3):=
		  \int_{[1]} 
		  \left(k^{-4} \triangleright
		  a_{0}\right)
		  \left(k^{-3} \triangleright
		  \partial_e(a_1)\right)
		  \left(k^{-1} \triangleright
		  \partial_f(a_2)\right)
		  \left(\partial_H  (a_3)\right)		  
$$
and 
$$
		  \varphi_{321}
		  (a_0,a_1,a_2,a_3):=
		  -q^{-2} 
		  \int_{[1]}
		  \left(k^{-4} \triangleright
		  a_{0}\right)
		  \left(k^{-3} \triangleright
		  \partial_f(a_1)\right)
		  \left(k^{-1} \triangleright
		  \partial_e(a_2)\right)\left(
		  \partial_H ( a_3)\right).		  
$$
\end{corl}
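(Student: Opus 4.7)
The plan is to apply Lemma~\ref{salmon} iteratively, starting from $\varphi = \int_{[1]} \smallsmile \partial_1 \smallsmile \partial_2 \smallsmile \partial_3$ with the $\sigma_i, \partial_i$ specified in Lemma~\ref{haddock}. The two identities in Lemma~\ref{salmon} realise the transpositions $(2\,3)$ and $(1\,2)$ on the three derivation positions modulo coboundaries; since these generate the whole symmetric group $S_3$, each of the five remaining cocycles $\varphi_w$ is reached by at most three successive applications of Lemma~\ref{salmon}. Concretely, $\varphi_{132}$ and $\varphi_{213}$ arise from a single use of the first and the second identity respectively, $\varphi_{231}$ and $\varphi_{312}$ from compositions $(2\,3)(1\,2)$ and $(1\,2)(2\,3)$, and $\varphi_{321}$ from a chain of three transpositions.

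The key technical ingredients are the $U_q(\mathfrak{su}(2))$ commutation relations, which translate into the operator identities
\envaln{
\pk \circ \partial_e &= q\,\partial_e \circ \pk\,, & \pk \circ \partial_f &= q^{-1}\partial_f \circ \pk\,, & \pk \circ \partial_H &= \partial_H \circ \pk\,,
}
together with the fact that $\pk$ is an algebra automorphism. These suffice to evaluate the auxiliary derivations $\tilde\partial_i$ and $\hat\partial_i$ of Lemma~\ref{salmon} in closed form and to extract the accompanying powers of $q$. For instance, in computing $\varphi_{132}$ one finds $\sigma_1 \circ \sigma_2^{-1} = k^{-2}\triangleright$, so $\tilde\partial_3 = (k^{-3}\triangleright)\circ \partial_f$, while pushing $\partial_e$ past $\sigma_2^{-1} = k^2\triangleright$ produces a factor $q^{-2}$, giving $\tilde\partial_2 = q^{-2}(k^{-1}\triangleright)\circ \partial_e$. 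Combined with the sign from Lemma~\ref{salmon} (and the fact that $\pk$ respects products under $\int_{[1]}$), this yields exactly the stated prefactor $-q^{-2}$ in $\varphi_{132}$. The case $\varphi_{213}$ is analogous but simpler, producing no $q$-power because $\partial_H$ commutes with $\pk$, and $\sigma_0 \circ \sigma_1^{-1} = \mathrm{id}$.

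For the iterated cases $\varphi_{312}, \varphi_{231}, \varphi_{321}$ one re-applies Lemma~\ref{salmon} with the $\sigma_i$ and $\partial_i$ updated from the previous transposition; the commutation relations above are applied again, and the accumulated $q$-powers reproduce exactly the prefactors $+q^{-2}, +1, -q^{-2}$ stated in the corollary. The main obstacle is not conceptual but a matter of bookkeeping: at every step one has to record carefully which $k^{-n}\triangleright$ appears in front of each derivation and how the automorphisms $\sigma_i$ have shifted, so that the new $\tilde\partial_i$, $\hat\partial_i$ can be evaluated and the $q$-factors that appear when moving $\partial_e$ or $\partial_f$ past powers of $\pk$ are tallied correctly. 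That the cohomology class is unchanged at each step is automatic from Lemma~\ref{salmon}, so once the explicit formulas match, the corollary follows.
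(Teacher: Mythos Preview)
Your proposal is correct and follows essentially the same approach as the paper: both argue by repeated application of Lemma~\ref{salmon}, using the two identities there as the adjacent transpositions $(1\,2)$ and $(2\,3)$ to walk through $S_3$, and both rely on the commutation relations $\pk \partial_e = q\,\partial_e \pk$, $\pk \partial_f = q^{-1}\partial_f \pk$, $\pk \partial_H = \partial_H \pk$ to simplify the auxiliary derivations $\tilde\partial_i,\hat\partial_i$ and extract the $q$-prefactors. Your explicit check for $\varphi_{132}$ and $\varphi_{213}$ matches the paper's computation verbatim, and your routes to $\varphi_{231}$, $\varphi_{312}$, $\varphi_{321}$ via compositions of transpositions coincide with the paper's (the paper obtains $\varphi_{231}$ from $\varphi_{213}$ and $\varphi_{321}$ from $\varphi_{312}$, each by one further application of Lemma~\ref{salmon}, and treats $\varphi_{312}$ as the $e\leftrightarrow f$ analogue of the $\varphi\to\varphi_{213}$ step).
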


\begin{proof}
To begin, one applies
 Lemma~\ref{salmon} to $\varphi$ with
$$
		  \tilde \partial_3 =
		  (k^{-3}
		  \triangleright \cdot) \circ \partial_f,\quad
		  \tilde \partial_2=
		  (k^{-3} \triangleright
		  \cdot) \circ \partial_e \circ (k^{2}
		  \triangleright \cdot),\quad
		  \hat \partial_2=(k^{-3} \triangleright
		  \cdot) \circ \partial_e,\quad
		  \hat \partial_1:=(k^{-4} \triangleright
		  \cdot) \circ \partial_H
		  (\cdot) 
 \circ (k^{2} \triangleright
		  \cdot)\,.
$$
The formulae for these derivations can be simplified by commuting $\partial_e$ and 
$k\,\triangleright$ to obtain
$$
		  \tilde \partial_3 =
		  (k^{-3}
		  \triangleright \cdot) \circ \partial_f,\quad
		  \tilde \partial_2=
		  q^{-2} (k^{-1} \triangleright
		  \cdot) \circ \partial_e,\quad
		  \hat \partial_2=(k^{-3} \triangleright
		  \cdot) \circ \partial_e,\quad
		  \hat \partial_1:=(k^{-2} \triangleright
		  \cdot) \circ \partial_H(
		  \cdot).
$$
This gives $\varphi_{132}$ and $\varphi_{213}$.
Then we can apply Lemma~\ref{salmon}
again to  $\varphi_{213}$.
Going from $\varphi_{213}$ to $\varphi_{312}$ is easy, since it only involves
exchanging $e$ and $f$.
Next we obtain $\varphi_{231}$ from $\varphi_{213}$ by applying
 Lemma~\ref{salmon} 
with
$$
		  \sigma_0=k^{-4}
		  \triangleright \cdot,\quad\sigma_1=
		  \sigma_2=k^{-2}
		  \triangleright \cdot,\quad
		  \sigma_3=\mathrm{id},
$$
$$
		  \partial_1=(k^{-3} \triangleright
		  \cdot) \circ \partial_e,\quad
		  \partial_2=(k^{-2}
		  \triangleright \cdot) \circ \partial_H
		  ,\quad
		  \partial_3=(k^{-1} \triangleright
		  \cdot) \circ \partial_f 
$$
which gives
$$
		  \tilde \partial_3=\sigma_1 \circ
 \sigma_2^{-1} \circ \partial_3=\partial_3=
		  (k^{-1} \triangleright
		  \cdot) \circ \partial_f,
$$
$$
		  \tilde \partial_2=\partial_2
 \circ \sigma_2^{-1} \circ \sigma_3=
		  (k^{-2}
		  \triangleright \cdot) \circ \partial_H
		   \circ (k^{2}
		  \triangleright \cdot)=\partial_H\,.
$$

The last cocycle is obtained analogously  from $\varphi_{312}$.
\end{proof}



A homologically nontrivial 3-cycle $dvol$ in the (pre)dual chain complex
$C_\bullet:=\A^{\otimes_\mathbb{C}\bullet+1}$ (with differential dual to $b_{\vartheta^{-1}}$)
has been computed in \cite{HK1,HK2}:

\enval{
dvol &:= d \ox a \ox b \ox c - d \ox a \ox c \ox b + q \, d \ox c \ox a \ox b \nonumber \\
& \quad - q^{2} \, d \ox c \ox b \ox a + q^{2} \, d \ox b \ox c \ox a - q \, d \ox b \ox a \ox c \nonumber \\
& \quad + c \ox b \ox a \ox d - c \ox b \ox d \ox a + q \, c \ox d \ox b \ox a \nonumber \\
& \quad - c \ox d \ox a \ox b + c \ox a \ox d \ox b - q^{-1} \, c \ox a \ox b \ox d \nonumber \\
& \quad + (q^{-1}-q) \, c \ox b \ox c \ox b \label{eqn_dvol}
}

With this normalisation, we have
$\varphi(dvol)=1$.

\section{Some meromorphic functions}
\label{sec:mero}

In this section 
we demonstrate that certain functions have meromorphic
continuations. These functions arise in the residue formula for the Hochschild cocycle in
the next two sections. We require the following notation.
For any $l \in \frac{1}{2} \mathbb{N}_0$ 
and $-(2l+1) \leq n \leq (2l+1)$ define

\enveq{
\lambda_{l,n} := 
\sqrt{\left(\frac{n}{2}\right)^{2} + q^{n}\left(\left[ l+\half \right]_{q}^{2} - \left[\frac{n}{2} \right]_{q}^{2} \right)}\,.
}

We also define the finite sets

\enveqn{
\mathcal{J}_{l} := \begin{cases}
\{ 0, 2, \ldots, 2l-1 \} & \quad l \in (\bN_{0} + \thalf) \\
\{ 1, 3, \ldots, 2l-1 \} & \quad l \in \bN
\end{cases}.
}

\begin{lemma}
\label{lemma_holomorphic}

The formulas

\enveqn{
z \mapsto f_1(z):=\sum_{2l = 1}^{\infty} \sum_{i = -l}^{l} \sum_{n \in \mathcal{J}_{l}} 
\frac{q^{2l-2i}}{ (1 + \lambda_{l, n}^{2})^{z/2} }
}

\enveqn{
z \mapsto f_2(z):=\sum_{2l = 1}^{\infty} \sum_{i = -l}^{l} \sum_{n \in \mathcal{J}_{l}} 
\frac{q^{2l-n}}{ (1 + \lambda_{l, n}^{2})^{z/2} }
}

define holomorphic functions on $\domain_2$, where we abbreviate
$$
\domain_t := \{z \in \mathbb{C} \mid 
\mathrm{Re}(z) > t\},\quad t \in \mathbb{R}.
$$
\end{lemma}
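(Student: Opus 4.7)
The strategy is to show that both series converge absolutely and locally uniformly on $\domain_2$. Since every summand $z\mapsto(1+\lambda_{l,n}^2)^{-z/2}$ is entire in $z$ (because $1+\lambda_{l,n}^2\ge 1>0$), the standard theorem on locally uniform limits of holomorphic functions then gives holomorphy of $f_1$ and $f_2$ on $\domain_2$.

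The key preliminary is a lower bound for $\lambda_{l,n}^2$. Expanding the $q$-numbers produces the identity
\[
\lambda_{l,n}^2 \;=\; \frac{n^2}{4} + Q^2\!\left(q^{n-2l-1} + q^{n+2l+1} - 1 - q^{2n}\right).
\]
Setting $m:=2l-n\ge 1$, and using $q^{2n},q^{n+2l+1}\in(0,1]$ for $0\le n\le 2l-1$, one finds a constant $c>0$ and a threshold $m_{*}$ (both depending only on $q$) such that for $m\ge m_{*}$
\[
\lambda_{l,n}^2 \;\ge\; \frac{n^2}{4} + c\,q^{-m}.
\]
The finitely many $m<m_{*}$ are absorbed by the always-valid bound $\lambda_{l,n}^2\ge n^2/4$.

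Writing $s=\mathrm{Re}(z)$, this yields $(1+\lambda_{l,n}^2)^{-s/2}\le C\min(n^{-s},q^{ms/2})$. The two bounds cross at $n\asymp q^{-m/2}$, equivalently $m\asymp m_0(n):=2\log n/|\log q|$, and satisfy the matching identity $q^{m_0(n)s/2}=n^{-s}$. For each fixed $n\ge 1$ I would split the admissible $m$'s into a polynomial region $A_n=\{m\le m_0(n)\}$ (with $O(\log n)$ values) and an exponential region $B_n=\{m>m_0(n)\}$ (a geometric tail with first term $\sim n^{-s}$). For $f_1$, the inner sum $\sum_{i=-l}^{l}q^{2l-2i}$ equals $(1-q^{4l+2})/(1-q^2)$ and is bounded uniformly in $l$, so the total is dominated by $\sum_{n\ge 1}(\text{contribution of } A_n\cup B_n)=O\bigl(\sum_n n^{-s}\log n\bigr)$, which is finite already for $s>1$. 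For $f_2$, the numerator $q^{2l-n}=q^m$ does not depend on $i$, so the $i$-sum yields the factor $2l+1=n+m+1$; the $A_n$-contribution is then $n^{-s}\sum_{m\in A_n}(n+m+1)q^m\lesssim n\cdot n^{-s}=n^{1-s}$, and the $B_n$-contribution is $\lesssim n\cdot q^{m_0(n)(1+s/2)}=n\cdot n^{-2-s}=n^{-1-s}$. Summing in $n$, the binding series $\sum_n n^{1-s}$ converges precisely for $s>2$.

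All estimates are uniform in $z$ on any half-plane $\{\mathrm{Re}(z)\ge s_0\}$ with $s_0>2$, so the convergence is locally uniform on $\domain_2$, giving holomorphy there. The main obstacle is engineering the splitting correctly: used alone, the polynomial bound $\lambda_{l,n}^2\ge n^2/4$ diverges because each fixed $n$ lies in $\mathcal{J}_l$ for infinitely many $l$, while the exponential bound $\lambda_{l,n}^2\gtrsim q^{-m}$ is too crude near the diagonal $n\approx 2l$. The matching identity $q^{m_0(n)s/2}=n^{-s}$ is precisely what makes the polynomial and exponential halves of the split produce majorants of the same order and sum to a convergent series.
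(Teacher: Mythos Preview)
Your argument is correct and follows the same overall strategy as the paper: reduce to real $s=\mathrm{Re}(z)$, bound $\lambda_{l,n}^2$ from below, reparametrise via $m=2l-n$, and estimate the resulting double sum uniformly on half-planes. The difference lies in how the double sum is estimated. The paper avoids your region-splitting entirely: for $f_1$ it uses the inequality $\alpha^2+\beta^2\ge\alpha\beta$ (applied with $\alpha^2=1+n^2/4$ and $\beta^2=q^{-m}$) to factorise the bound as a product $q^{ms/4}\cdot(1+n^2/4)^{-s/4}$, which separates the two summations cleanly; for $f_2$ it simply discards the $q^{-m}$ term in the denominator, since the numerator already carries the factor $q^{m}$ needed for the $m$-sum to converge geometrically. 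Your matching-split method is more systematic (one technique for both functions) and in fact yields a sharper conclusion for $f_1$ (convergence for $s>1$ rather than $s>2$), while the paper's treatment is shorter and more direct for each case. One small omission in your write-up: the index set $\mathcal{J}_l$ contains $n=0$ when $l$ is a half-integer, so your ``for each fixed $n\ge1$'' should be supplemented by a remark that the $n=0$ slice is handled by the exponential bound alone (which is trivial, since then $\lambda_{l,0}^2\gtrsim q^{-2l}$).
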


\begin{proof}
We will show that the sums converge uniformly on compacta. 
To begin with, we take
$z=t \in (2,\infty)$, and compute the summation over the $i$ parameter for $f_{1}$ and $f_{2}$ giving

\enval{
f_{1}(t) 
&= \sum_{2l = 1}^{\infty} \sum_{n \in \mathcal{J}_{l}} 
\frac{q^{2l} [2l+1]_{q} }{ (1 + \lambda_{l, n}^{2})^{t/2} }, &
f_{2}(t) 
&= \sum_{2l = 1}^{\infty} \sum_{n \in \mathcal{J}_{l}} \frac{(2l+1) q^{2l-n}}{ (1 + \lambda_{l, n}^{2})^{t/2} }.
\label{eq:pacific-giant-octopus}
}

For $l \in \frac{1}{2}\mathbb{N}_0$ and $n\in \J_l$ we have the inequality
$$
		  \left[l+\frac{1}{2}
 \right]^2_q-\left[\frac{n}{2}
 \right]^2_q \ge 
		  [2l]_q
$$
with equality attained for $n=2l-1$. This inequality implies

\begin{equation}
1 + \lambda_{l, n}^{2} \geq 1 + \left(\frac{n}{2}\right)^{2} + q^{n}[2l]_{q} 
\geq 1 + \left(\frac{n}{2}\right)^{2} + q^{n-2l+1}.
\label{eq:mullet}
\end{equation}

Since the summands in Equation \eqref{eq:pacific-giant-octopus} are positive, we
may invoke Tonelli's theorem to rearrange the order of summation
$$
\sum_{2l = 1}^{\infty} \sum_{n \in \mathcal{J}_{l}} 
\rightarrow \sum_{n = 0}^{\infty} \sum_{l = (n+1)/2}^{\infty}.
$$ 
Combining the elementary inequality $q^{2l} [2l+1]_{q} \leq q^{-1}Q$ with  
Equation \eqref{eq:mullet} gives the inequalities

\envaln{
f_{1}(t) &\leq q^{-1} Q \sum_{n = 0}^{\infty} 
\sum_{l = \frac{n+1}{2}}^{\infty} \frac{1 }{ (1 + \left(\frac{n}{2}\right)^{2} + q^{n-2l+1})^{t/2} }, &
f_{2}(t) &\leq \sum_{n = 0}^{\infty} 
\sum_{l = \frac{n+1}{2}}^{\infty} \frac{(2l+1) q^{2l-n}}{ (1 + \left(\frac{n}{2}\right)^{2} + q^{n-2l+1})^{t/2} }.
}

We reparameterise the sums defining $f_1$ and $f_2$ using $y = 2l-1-n$ with 
summation range $y=0$ to $y=\infty$. This yields

\enval{
f_{1}(t) &\leq q^{-1} Q \sum_{n = 0}^{\infty} \sum_{y = 0}^{\infty} \frac{1 }{ (1 + \left(\frac{n}{2}\right)^{2} + q^{-y})^{t/2} }, &
f_{2}(t) &\leq \sum_{n = 0}^{\infty} \sum_{y = 0}^{\infty} \frac{(y+n+2) q^{y+1}}{ (1 + \left(\frac{n}{2}\right)^{2} + q^{-y})^{t/2} }.
\label{eq:barramundi}
}

Next we employ the inequality $\alpha^{2} + \beta^{2} \geq \alpha \beta$, valid for 
any positive real numbers $\alpha$ and $\beta$, to $f_{1}(t)$. This yields

\enveqn{
f_{1}(t) \leq q^{-1} Q \sum_{n =
 0}^{\infty} \sum_{y = 0}^{\infty} \,q^{yt/4}\,
\left(1 + \left(\frac{n}{2}\right)^{2}\right)^{-t/4}  < \infty \qquad \text{for all} \ t > 2.
}

For the function $f_{2}(t)$, we evaluate the sums over $y$ on the right hand side to obtain, for some 
positive constants $C_{1}$ and $C_{2}$,

\enveqn{
f_{2}(t) \leq \sum_{n = 0}^{\infty}
 \sum_{y = 0}^{\infty} \frac{(y+n+2)
 q^{y+1}}
{ \left(1 +  \left(\frac{n}{2}\right)^{2}\right)^{t/2} } 
= \sum_{n = 0}^{\infty} \frac{C_{1} + C_{2}n }{ \left(1 +  \left(\frac{n}{2}\right)^{2}\right)^{t/2} }.
}

This last sum is finite for all $t > 2$, and bounded uniformly for $t\geq 2+\epsilon$ for any $\epsilon>0$. 
This establishes that
$f_1,f_2$ are finite for all $\mathrm{Re}(z) > 2$, and the sums defining
them converge uniformly on vertical strips, and so on compacta. 
Finally, to show that $f_1,f_2$
are holomorphic in the half-plane ${\mathrm Re}(z)>2$, we invoke the
Weierstrass convergence theorem.
\end{proof}

\begin{lemma}
\label{lem:general-pole-3}
For any positive reals $x,\, y,\, r > 0$, $w\in \N$, and $z \in
\domain_3$, define 

\enveqn{
  h(z) := 
  \sum_{n=1}^{\infty}\sum^\infty_{m=w} 
  \frac{e^{rm} }{ (x^{2} n^{2} + y^{2} e^{rm})^{z/2} }
}

Then we have:
\begin{enumerate}
\item $h$ is a holomorphic function on $\domain_3$; 
\item $h$ has a meromorphic continuation to 
$\domain_2$ with a simple pole
at $z=3$;
\item This continuation can be written as 
\enveqn{
h(z) = \frac{\sqrt{\pi}}{2xy^{z-1}} \frac{\Gamma(\frac{z-1}{2})}{\Gamma(\frac{z}{2})} 
\frac{e^{-rw(z-3)/2}}{1 - e^{-r(z-3)/2}} - \frac{1}{2y^{z}} \frac{e^{-rw(z-2)/2}}{1 - e^{-r(z-2)/2}} + err(z)
}
where $err$ is a holomorphic function on $\domain_2$
that satisfies 
$$
|err(z)| \leq \frac{1}{2y^{\mathrm{Re}(z)}} \frac{e^{-rw(\mathrm{Re}(z) - 2)/2}}{1 - e^{-r(\mathrm{Re}(z) - 2)/2}}.
$$
\end{enumerate} 
\end{lemma}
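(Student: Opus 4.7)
The plan is to apply Euler--Maclaurin summation (or, equivalently, Poisson summation) to the inner sum over $n$ for each fixed $m$, extracting two explicit meromorphic terms and a smaller remainder, then summing against $e^{rm}$ to produce geometric series in $m$. Item (1) follows from showing that the double sum converges absolutely and uniformly on compact subsets $K\subset\domain_3$: using an AM--GM-type split to bound $x^2 n^2 + y^2 e^{rm}$ below (as in the proof of Lemma~\ref{lemma_holomorphic}) reduces the sum to a convergent product of a $\zeta$-type series in $n$ and a geometric series in $m$, both convergent precisely when $\mathrm{Re}(z)>3$; Weierstrass' theorem then yields holomorphy on $\domain_3$.

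For items (2) and (3), fix $m$, set $f(n):=(x^2 n^2 + y^2 e^{rm})^{-z/2}$, and apply first-order Euler--Maclaurin,
\enveqn{
\sum_{n=1}^\infty f(n) = \int_0^\infty f(n)\,dn - \frac{f(0)}{2} + R_m(z), \qquad R_m(z):=\int_0^\infty\bigl(\{n\}-\tfrac{1}{2}\bigr)f'(n)\,dn,
}
valid for $\mathrm{Re}(z)>1$. The substitution $u = xn/(ye^{rm/2})$ evaluates $\int_0^\infty f(n)\,dn = \tfrac{\sqrt\pi}{2x}\tfrac{\Gamma((z-1)/2)}{\Gamma(z/2)}(ye^{rm/2})^{1-z}$, and $f(0)/2 = \tfrac{1}{2}(ye^{rm/2})^{-z}$. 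Multiplying by $e^{rm}$ and summing over $m\geq w$ converts the integral into the geometric series $y^{1-z}\sum_{m\geq w}e^{-rm(z-3)/2}=y^{1-z}\,e^{-rw(z-3)/2}/(1-e^{-r(z-3)/2})$ and the boundary term into $y^{-z}\,e^{-rw(z-2)/2}/(1-e^{-r(z-2)/2})$, exactly the first two terms in the claimed expression for $h(z)$. Setting $err(z):=\sum_{m\geq w}e^{rm} R_m(z)$ then gives the decomposition required in (3).

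The main obstacle is controlling $err(z)$. Using $|\{n\}-\tfrac12|\leq\tfrac12$ together with the substitution $u=x^2n^2+y^2e^{rm}$ in the defining integral yields the pointwise estimate $|R_m(z)|\leq \tfrac{|z|}{2\,\mathrm{Re}(z)}(y^2 e^{rm})^{-\mathrm{Re}(z)/2}$; summing against $e^{rm}$ gives a geometric series in $e^{-r(\mathrm{Re}(z)-2)/2}$ which converges uniformly on compact subsets of $\domain_2$, proving via Weierstrass that $err$ is holomorphic on $\domain_2$ and bounded by an expression of the same shape as in (3). The precise constant claimed in (3) (without the factor $|z|/\mathrm{Re}(z)$) is reached by one further integration by parts, transferring $\{n\}-\tfrac{1}{2}$ onto its bounded antiderivative $\tfrac{1}{2}\{n\}(\{n\}-1)$ (of modulus at most $\tfrac{1}{8}$) and then estimating $\int_0^\infty|f''(n)|\,dn$ by a similar substitution. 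Finally, $\Gamma((z-1)/2)/\Gamma(z/2)$ is holomorphic and nonzero at $z=3$ while $(1-e^{-r(z-3)/2})^{-1}$ has a simple pole there, and the second explicit term is holomorphic at $z=3$ (its pole at $z=2$ lies on the boundary of $\domain_2$, not inside); this gives the simple pole of $h$ at $z=3$ asserted in (2) and completes the identification in (3).
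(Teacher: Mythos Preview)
Your approach is correct and gives the same two explicit terms in the decomposition, but it is genuinely different from the paper's route. The paper inserts the Mellin representation $(x^2n^2+y^2e^{rm})^{-z/2}=\Gamma(z/2)^{-1}\int_0^\infty t^{z/2-1}e^{-t(x^2n^2+y^2e^{rm})}\,dt$, then applies the Jacobi theta inversion (Poisson summation) to $\sum_{n\geq 1}e^{-tx^2n^2}$, and identifies the remaining integral as a modified Bessel function $K_{(z-1)/2}$; the error bound is obtained from the monotonicity of $u\mapsto u^\nu K_\nu(u)$ and the closed-form integral $\int_0^\infty u^\nu K_\nu(u)\,du$. Your Euler--Maclaurin argument is considerably more elementary (no special functions beyond the Beta integral), and the two explicit terms you extract, the Beta-integral term and the boundary term $f(0)/2$, are exactly the ones the paper obtains from the leading two terms of the theta inversion.

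The one place your sketch is loose is the final sentence about the constant in the error bound. Your first-order estimate $|R_m(z)|\le\tfrac12\int_0^\infty|f'(n)|\,dn=\tfrac{|z|}{2\,\mathrm{Re}(z)}(y^2e^{rm})^{-\mathrm{Re}(z)/2}$ is correct and already gives holomorphy of $err$ on $\domain_2$ together with a bound of the required shape times $|z|/\mathrm{Re}(z)$; this is enough for all subsequent uses in the paper (Lemma~\ref{lemma_tau_residue} only needs holomorphy of $err$ on $\domain_2$ and the value of the residue at $z=3$). However, the claim that one further integration by parts with the antiderivative $\tfrac12\{n\}(\{n\}-1)$ recovers \emph{exactly} the stated constant $\tfrac{1}{2y^{\mathrm{Re}(z)}}$ is not right as written: since $f''$ changes sign once, $\int_0^\infty|f''(n)|\,dn$ equals the total variation $-2f'(n_0)$, which scales like $A^{-(\mathrm{Re}(z)+1)}$ with $A=ye^{rm/2}$ rather than $A^{-\mathrm{Re}(z)}$, producing a bound of a different (in fact stronger, convergent already for $\mathrm{Re}(z)>1$) shape, not the one in the statement. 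If you want the literal bound asserted in part~(3), either follow the paper's Bessel estimate or simply note that for real $z$ one has $|z|/\mathrm{Re}(z)=1$, and the paper only invokes the bound for real $z$ when it is used in the proof of Lemma~\ref{lemma_tau_residue}.
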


\begin{proof} 
Until further notice, we take $z$ real and positive. Later we will extend our
results to complex $z$ as in Lemma \ref{lemma_holomorphic}.
Inserting the Mellin transform of $f(t)=e^{-(x^2n^2+y^2e^{rm})t}$ gives


\enveqn{
h(z) = \sum_{n=1}^\infty \sum_{m=w}^{\infty} 
\frac{e^{rm} }{\Gamma(\frac{z}{2})} \int_{0}^{\infty} t^{\frac{z}{2} - 1}e^{-tx^{2} n^{2}} e^{-t y^{2} e^{rm}} dt.
}

For $z$ real, all terms above are positive. Therefore we can apply Tonelli's theorem to 
exchange the order of integration with summation. 
Having done this, we  consider the sum $\sum_{n = 1}^{\infty} e^{-tx^{2} n^{2}}$. The Poisson summation 
formula provides the identity

\enveqn{
\sum_{n = 1}^{\infty} e^{-tx^{2} n^{2}} = \frac{1}{2} \left( \sqrt{\frac{\pi}{tx^{2}}} 
\left( 1 + 2 \sum_{n = 1}^{\infty} e^{-\frac{n^{2}\pi^{2}}{tx^{2}}} \right) - 1 \right).
}

Substituting this identity into the expression for $h(z)$ we find

\envaln{
h(z) &= \frac{1}{2} \sum_{m=w}^{\infty} \frac{e^{rm}}{(y^{2} e^{rm})^{\frac{z}{2}}} \left( \frac{\sqrt{\pi}}{x} 
\frac{\Gamma(\frac{z-1}{2})}{\Gamma(\frac{z}{2})} (y^{2} e^{rm})^{\frac{1}{2}}  - 1 \right) \\
& \quad + \frac{\sqrt{\pi}}{x} \sum_{n=1}^\infty \sum_{m=w}^{\infty} \frac{e^{rm} }{\Gamma(\frac{z}{2})} 
\int_{0}^{\infty} t^{\frac{z-1}{2} - 1}e^{-\frac{n^{2}\pi^{2}}{tx^{2}}} e^{-t y^{2} e^{rm}} dt.
}

To explore the convergence of the double sum we denote

\enveqn{
g_{n}(s) := \int_{0}^{\infty} t^{\frac{z-1}{2} - 1}e^{-\frac{n^{2}\pi^{2}}{tx^{2}}} e^{-t s} dt.
}

Later we will set $s = y^{2} e^{rm} > 0$, so we consider only positive, 
real $s$, making $g_{n}(s)$ a positive real function. 
Using 
\cite[Section 26:14]{OS}  to evaluate this Laplace transform gives

\enveqn{
g_{n}(s) = 
2 \left( \frac{n \pi}{x \sqrt{s}} \right)^{\frac{z-1}{2}} K_{\frac{z-1}{2}} \left( \frac{2 n \pi \sqrt{s}}{x} \right)
}

where $u\mapsto K_{\nu}(u)$ is the modified Bessel function of the second kind. For $u > 0$ and real $\nu > 1/2$,
$u^{\nu}K_{\nu}(u)$ is positive, as both $u^{\nu}$ and $K_{\nu}(u)$ are positive. Also, 
the derivative (referring again to \cite{OS}) is given by

\enveqn{
\frac{\p}{\p u}\left( u^{\nu}K_{\nu}(u) \right) = -u^{\nu} K_{\nu - 1}(u) \leq 0 \qquad \text{for all} \ u \geq 0.
}

Thus the function $u\mapsto u^{\nu}K_{\nu}(u)$ is positive and monotonically decreasing 
for all $u > 0$. Hence for all $\epsilon > 0$ we have the bound

\enveq{
\epsilon \sum_{n = 1}^{\infty} (\epsilon n)^{\nu} K_{\nu}(\epsilon n) \leq \int_{0}^{\infty} u^{\nu} K_{\nu}(u) du.
\label{eq:bound}
}

Evaluating the integral (using \cite[Chapter 51]{OS}) yields

\enveqn{
\sum_{n = 1}^{\infty} (\epsilon n)^{\nu} K_{\nu}(\epsilon n) \leq \frac{1}{\epsilon} 2^{\nu - 1} 
\Gamma(\tfrac{1}{2}) \Gamma(\nu + \tfrac{1}{2}).
}

If we now set $s = y^{2} e^{rm}$, we obtain the bound

\enveqn{
  \sum_{n=1}^\infty \sum_{m=w}^{\infty}\frac{e^{rm} }{\Gamma(\frac{z}{2})} 
\int_{0}^{\infty} t^{\frac{z-1}{2} - 1}e^{-\frac{n^{2}\pi^{2}}{tx^{2}}} e^{-t y^{2} e^{rm}} dt 
\leq 2\sum_{n=1}^\infty \sum_{m=w}^{\infty} \frac{e^{rm} }{\Gamma(\frac{z}{2})}  
\left( \frac{n \pi}{x ye^{rm/2}} \right)^{\frac{z-1}{2}} K_{\frac{z-1}{2}} \left( \frac{2 n \pi ye^{rm/2}}{x} \right).
}

Now estimating the sum over $n$ on the right using Equation \eqref{eq:bound} gives us

\envaln{
2\sum_{n=1}^{\infty}  \left( \frac{n \pi}{x \sqrt{s}} \right)^{\frac{z-1}{2}} K_{\frac{z-1}{2}} 
\left( \frac{2 n \pi \sqrt{s}}{x} \right) &= 2 \left( \frac{1}{2s} \right)^{\frac{z-1}{2}} 
\sum_{n=1}^{\infty} \left( \frac{2n \pi \sqrt{s}}{x} \right)^{\frac{z-1}{2}} K_{\frac{z-1}{2}} 
\left( \frac{2 n \pi \sqrt{s}}{x} \right) \\
& \leq 2 \left( \frac{1}{2s} \right)^{\frac{z-1}{2}} \frac{x}{2 \pi \sqrt{s}} 2^{\frac{z-1}{2} - 1} 
\Gamma(\tfrac{z}{2}) \Gamma(\tfrac{1}{2}) \\
&= \frac{x \Gamma(\tfrac{1}{2})\Gamma(\tfrac{z}{2})}{2 \pi }  \frac{1}{s^{z/2}}
=\frac{x \Gamma(\tfrac{1}{2})\Gamma(\tfrac{z}{2})}{2 \pi }  \frac{1}{y^ze^{zrm/2}}.
}

Hence by summing the remaining geometric series in $m$ we obtain the bound

\envaln{
 \sum_{n=1}^\infty \sum_{m=w}^{\infty} \frac{e^{rm} }{\Gamma(\frac{z}{2})} 
\int_{0}^{\infty} t^{\frac{z-1}{2} - 1}e^{-\frac{n^{2}\pi^{2}}{tx^{2}}} e^{-t y^{2} e^{rm}} dt 
&\leq \frac{\Gamma \left(\frac{z}{2} \right)}{\Gamma(\frac{z}{2})} 
\frac{x \Gamma(\tfrac{1}{2})}{y^{z} 2 \pi } \sum_{m=w}^{\infty} \frac{e^{rm}}{e^{rmz/2}} \\
& \leq \frac{x \Gamma(\tfrac{1}{2})}{y^{z} 2 \pi } 
\frac{e^{-rw(z - 2)/2}}{1 - e^{-r(z - 2)/2}}.
}

Evaluating the remaining geometric series in $h(z)$ as above, we arrive at

\enveq{
h(z) = \frac{\sqrt{\pi}}{2xy^{z-1}} \frac{\Gamma(\frac{z-1}{2})}{\Gamma(\frac{z}{2})} 
\frac{e^{-rw(z-3)/2}}{1 - e^{-r(z-3)/2}} - \frac{1}{2y^{z}} \frac{e^{-rw(z-2)/2}}{1 - e^{-r(z-2)/2}} + err(z)
\label{eq:box-jelly}
}

where

\envgan{
err(z) := \frac{\sqrt{\pi}}{x} \sum_{n=1}^\infty \sum_{m=w}^{\infty} \frac{e^{rm} }{\Gamma(\frac{z}{2})} 
\int_{0}^{\infty} t^{\frac{z-1}{2} - 1}e^{-\frac{n^{2}\pi^{2}}{tx^{2}}} e^{-t y^{2} e^{rm}} dt, \\
err(z) \leq \frac{1}{2y^{z}} \frac{e^{-rw(z - 2)/2}}{1 - e^{-r(z - 2)/2}}.
}

Thus the sum defining the function $err$ converges for all $z > 2$, and this
convergence is uniform on compact intervals. Now we observe that for $z\in\C$ we 
have $|h(z)|\leq h(|z|)$ and
similarly $|err(z)|\leq err(|z|)$. Hence the sums defining $h$ converge uniformly on closed vertical 
strips in the half-plane $\domain_3$, and so on compacta. Similarly the sums and integral defining
$err$ converge uniformly on compact subsets of the half-plane $\domain_2$.

Hence the Weierstrass convergence theorem implies
that $err$ is holomorphic on the half-plane $\domain_2$ and that $h$ is holomorphic 
on $\domain_3$.
Moreover the formula for $h$, Equation \eqref{eq:box-jelly}, provides a meromorphic continuation
of $h$ to the half-plane $\domain_2$.
\end{proof}



\begin{lemma}
\label{lemma_tau_residue}
The formula
\enveqn{
f(z):=  \sum_{n=0}^{\infty} \sum_{l = \frac{n+1}{2}}^{\infty} \frac{q^{n-2l} }{ (1 + \lambda_{l, n}^{2})^{z/2} }
}

defines a holomorphic function on $\domain_3$. Moreover $f$ has a meromorphic continuation 
to $\domain_2$, a simple pole at $z = 3$ with
residue $4qQ^{-2}/\ln(q^{-1})$.
\end{lemma}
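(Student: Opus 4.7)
The plan is to reduce to Lemma~\ref{lem:general-pole-3} by extracting the dominant part of $1 + \lambda_{l,n}^2$. After substituting $j := 2l - n - 1 \in \Z_{\ge 0}$ (so that $q^{n-2l} = q^{-1-j}$) and expanding $[l+\thalf]_q^2$ and $[\tfrac{n}{2}]_q^2$ via the identity $[a]_q^2 = Q^2(q^{-2a} - 2 + q^{2a})$, a direct calculation yields the decomposition
\[
1 + \lambda_{l,n}^2 = \beta_{n,j} + \rho_{n,j}, \quad \beta_{n,j} := \left(\tfrac{n}{2}\right)^2 + Q^2 q^{-j-2}, \quad \rho_{n,j} := 1 - Q^2 - Q^2 q^{2n} + Q^2 q^{2n+j+2}.
\]
The key observation is that $|\rho_{n,j}|$ is uniformly bounded in $n, j$, while $\beta_{n,j}$ diverges as either $n$ or $j$ grows. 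Accordingly, I split $f(z) = f_0(z) + (f(z) - f_0(z))$ with main term $f_0(z) := \sum_{n,j \ge 0} q^{-1-j}\,\beta_{n,j}^{-z/2}$.

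The $n = 0$ contribution to $f_0$ reduces to a geometric series in $q^{z-2}$ (with a $Q^{-z}$ prefactor), which is plainly holomorphic on $\domain_2$ and contributes no pole at $z = 3$. For the $n \ge 1$ contribution, the shift $m := j + 2 \ge 2$ identifies the remaining double sum, up to an overall factor of $q$, with the function $h(z)$ of Lemma~\ref{lem:general-pole-3} for the parameters $x = \thalf$, $y = Q$, $r = \ln(q^{-1})$, $w = 2$. That lemma then furnishes the meromorphic continuation of $f_0$ to $\domain_2$ with a simple pole at $z = 3$ and residue
\[
q \cdot \frac{2}{x y^2 r} = q \cdot \frac{2}{\thalf \cdot Q^2 \cdot \ln(q^{-1})} = \frac{4q}{Q^2 \ln(q^{-1})},
\]
which matches the claimed value.

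It then remains to check that the remainder $f - f_0$ is holomorphic on $\domain_2$. Using the integral representation $A^{-z/2} = \Gamma(z/2)^{-1}\int_0^\infty t^{z/2-1}e^{-tA}\,dt$ together with $|e^{-t\rho} - 1| \le t|\rho|\,e^{t|\rho|}$, I would establish the bound
\[
\left|(\beta+\rho)^{-z/2} - \beta^{-z/2}\right| \le C(z)\,|\rho|\,(\beta - |\rho|)^{-\mathrm{Re}(z)/2 - 1}
\]
with $C(z)$ holomorphic in $z$, valid whenever $\beta > |\rho|$, i.e.\ for all but finitely many $(n,j)$. The finite exceptional sum is entire in $z$, while the tail is dominated by a series of exactly the same shape as $f_0$ but with $z$ shifted to $z + 2$; by the preceding step applied at the shifted argument, this tail is holomorphic on $\domain_1 \supset \domain_2$. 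The main obstacle is precisely this final error estimate: one must carefully peel off the small-$(n,j)$ indices where $\rho/\beta$ is not small, and then exploit the exponential growth of $\beta_{n,j}$ in $j$ together with the uniform boundedness of $\rho_{n,j}$ to ensure that the correction shifts the pole strictly below $z = 3$.
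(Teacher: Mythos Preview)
Your approach is essentially the paper's own: the decomposition $1+\lambda_{l,n}^2=\beta_{n,j}+\rho_{n,j}$ coincides (after the harmless reindexing $j=m-1$) with the paper's splitting into $\tfrac14 n^2+Q^2q^{-1}q^{-m}$ plus the uniformly bounded $C_{n,m}$, and the main term is reduced to Lemma~\ref{lem:general-pole-3} with the same parameters, yielding the same residue. The only technical difference is in the error term: the paper expands $(1+\rho/\beta)^{-z/2}$ as a full binomial series and bounds the tail by $\sum_{k\ge 1}|\binom{-z/2}{k}|C^k h(\mathrm{Re}(z)+2k)$, whereas you use a single Mellin-type estimate $|(\beta+\rho)^{-z/2}-\beta^{-z/2}|\lesssim (\beta-|\rho|)^{-\mathrm{Re}(z)/2-1}$; both arguments rest on the same idea that the correction effectively shifts $z\mapsto z+2$, pushing the pole below $\mathrm{Re}(z)=2$. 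Your one-step bound is a bit slicker, but note that your constant $C(z)=\Gamma(\mathrm{Re}(z)/2+1)/|\Gamma(z/2)|$ is not holomorphic in $z$ (only locally bounded), and you need the extra observation $\beta-|\rho|\ge c\beta$ on the tail before comparing with $f_0(z+2)$; both are minor and easily supplied.
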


\begin{proof}
First we write

\enveqn{
1 + \lambda_{l, n}^{2} = 1 +\tfrac{n^{2}}{4} + q^{n} \left( \left[ l + \tfrac{1}{2} \right]^{2} - \left[ \tfrac{n}{2} \right]^{2} \right) = \tfrac{1}{4} n^{2} + Q^{2} q^{-1} q^{n-2l} + C_{n, l}
}

where $C_{n,l}$ is uniformly bounded in $n,l$, and is given by

\enveqn{
C_{n, l} = 1 + Q^{2} q^{n} (q^{2l+1} - 2) - q^{n} \left[ \tfrac{n}{2} \right]^{2},\qquad |C_{n,l}|\leq 1+3Q^2.
}

Now we reparametrise the summation by letting $m = 2l-n$, yielding

\enveqn{
f(z) = 
\sum_{n=0}^{\infty} \sum_{m = 1}^{\infty} 
\frac{q^{-m} }{ (\tfrac{1}{4} n^{2} + Q^{2} q^{-1} q^{-m} + C_{n, m})^{z/2} }
}

where we understand $C_{n, m} = C_{n, l=(n+m)/2}$. The function

\enveqn{
z \mapsto \sum_{m = 1}^{\infty} \frac{q^{-m} }{ (Q^{2} q^{-1} q^{-m} + C_{0, m})^{z/2} } = \sum_{m = 1}^{\infty} \frac{q^{m(\frac{z}{2} - 1)} }{ (Q^{2} q^{-1}  + q^{m} C_{0, m})^{z/2} }
}

has summands with absolute value bounded by $M q^{m(\frac{z}{2} - 1)}$, $M>0$ constant, and so
by the Weierstrass convergence theorem is holomorphic for $\mathrm{Re}(z) > 2$. Hence for
some holomorphic function $holo$ on $\domain_2$ we have

\enval{
f(z) &= \sum_{n, m=1}^{\infty} \frac{q^{-m} }{ (\tfrac{1}{4} n^{2} + Q^{2} q^{-1} q^{-m} + C_{n, m})^{z/2} } 
+ holo(z) \nonumber \\
&= \sum_{n, m=1}^{\infty} \frac{q^{-m} }{ (\tfrac{1}{4} n^{2} + Q^{2} q^{-1} q^{-m})^{z/2}} \left( 1 + \frac{C_{n, m}}{\tfrac{1}{4} n^{2} + Q^{2} q^{-1} q^{-m}} \right)^{-z/2} + holo(z). \label{eqn_fz_expand}
}

The strategy now is to perform a binomial expansion on 
$$
\left( 1 + \frac{C_{n, m}}{\tfrac{1}{4} n^{2} + Q^{2} q^{-1} q^{-m}} \right)^{-z/2}
$$
ending up with a new sum of functions $\sum_{n,m,k}D_{n,m,k} \,h(z+2k)$ where $h$ is 
as in Lemma \ref{lem:general-pole-3}.
The binomial expansion requires the inequality

\enveqn{
\frac{C_{n, m}}{\tfrac{1}{4} n^{2} + Q^{2} q^{-1} q^{-m}} < 1
}

which holds for sufficiently large $m$. 
Recall that $|C_{n,m}|\leq 1+3Q^2=:C$ uniformly in 
$n,\,m$, and so we may choose $p \in \bN$ such that

\enveqn{
q^{-p} > q Q^{-2}C \qquad \implies \qquad \frac{|C_{n, m}|}{\tfrac{1}{4} n^{2} + Q^{2} q^{-1} q^{-m}} < 1 
\quad \forall n \geq 1,\  m \geq p.
}

Now, for any fixed $p$, sums of the form

\enveqn{
\sum_{n=1}^{\infty} \sum_{m=1}^{p-1} 
\frac{q^{-m} }{ (\tfrac{1}{4} n^{2} + Q^{2} q^{-1} q^{-m} + C_{n, m})^{z/2}}
}

can immediately be seen to be holomorphic for $\mathrm{Re}(z) > 2$ 
as the sum can be bounded by a constant multiple of the Riemann zeta function. 
Hence for such a choice of $p\in \N$ and for some holomorphic function
$holo$ on $\domain_2$ we have

\envaln{
f(z) &= \sum_{n=1}^{\infty} \sum_{m=p}^{\infty} \frac{q^{-m} }{ (\tfrac{1}{4} n^{2} + Q^{2} q^{-1} q^{-m})^{z/2}} \left( 1 + \frac{C_{n, m}}{\tfrac{1}{4} n^{2} + Q^{2} q^{-1} q^{-m}} \right)^{-z/2} + holo(z).
}
Now we perform the binomial expansion, separating the 
resulting infinite sum $\sum_{k=0}^\infty$ into the 
$k=0$ term and $\sum_{k=1}^\infty$. This gives

\envaln{
f(z)&= \sum_{n=1}^{\infty} \sum_{m=p}^{\infty} \frac{q^{-m} }{ (\tfrac{1}{4} n^{2} + Q^{2} q^{-1} q^{-m})^{z/2}} + \sum_{k = 1}^{\infty} \bca -\frac{z}{2} \\ k \eca \sum_{n=1}^{\infty} \sum_{m=p}^{\infty} \frac{q^{-m} (C_{n, m})^{k} }{ (\tfrac{1}{4} n^{2} + Q^{2} q^{-1} q^{-m})^{\frac{z+2k}{2}}} + holo(z)\\
&= h(z) + \sum_{k = 1}^{\infty} \bca -\frac{z}{2} \\ k \eca \sum_{n=1}^{\infty} 
\sum_{m=p}^{\infty} \frac{q^{-m} (C_{n, m})^{k} }{ (\tfrac{1}{4} n^{2} + Q^{2} q^{-1} q^{-m})^{\frac{z+2k}{2}}} 
+ holo(z),
}

where $h$ is as in Lemma \ref{lem:general-pole-3}, with $x=1/2$, $y=q^{-1/2}Q$,
$r=\ln(q^{-1})$ and $w=p$. Our aim now is to show that $f-h$ is a
holomorphic function on $\domain_2$. We need to show that the remaining summation converges 
to such a function. This remaining sum is bounded by
\begin{align*}
&\left|\sum_{k = 1}^{\infty} \bca -\frac{z}{2} \\ k \eca \sum_{n=1}^{\infty} 
\sum_{m=p}^{\infty} \frac{q^{-m} (C_{n, m})^{k} }{ (\tfrac{1}{4} n^{2} + Q^{2} q^{-1} q^{-m})^{\frac{z+2k}{2}}}\right|\\
&\qquad\leq \sum_{k = 1}^{\infty} \left| \bca -\frac{z}{2} \\ k \eca\right| \,C^k
\sum_{n=1}^{\infty} 
\sum_{m=p}^{\infty} \frac{q^{-m}}{ (\tfrac{1}{4} n^{2} + Q^{2} q^{-1} q^{-m})^{\frac{{\mathrm Re}(z)+2k}{2}}}\\
&\qquad\qquad= \sum_{k = 1}^{\infty} \left| \bca -\frac{z}{2} \\ k \eca\right| \,C^k h({\mathrm Re}(z)+2k).
\end{align*}

To estimate this sum of functions, we infer from Lemma \ref{lem:general-pole-3} that there exists
a positive function $M$ which is defined for $\mathrm{Re}(z) > 3$ and such that

\enveqn{
|h(z)| \leq M(z) \frac{e^{-\mathrm{Re}(z)rp/2}}{y^{\mathrm{Re}(z)}} 
=  M(z) (q^{\frac{1}{2}(p+1)}Q^{-1})^{\mathrm{Re}(z)}.
}

Hence

\enveqn{
\left| \sum_{k = 1}^{\infty} \bca -\frac{z}{2} \\ k \eca \sum_{n=1}^{\infty} 
\sum_{m=p}^{\infty} \frac{q^{-m} (C_{n, m})^{k} }{ (\tfrac{1}{4} n^{2} + Q^{2} q^{-1} q^{-m})^{\frac{z+2k}{2}}} \right| \leq \sum_{k = 1}^{\infty} \left| \bca -\frac{z}{2} \\ k \eca \right| 
C^{k} M(z+2k) (q^{\frac{1}{2}(p+1)}Q^{-1})^{\mathrm{Re}(z)+2k}.
}

Recall that $p$ was chosen such that $q^{-p} > q Q^{-2}C$. Also 
the function $z\mapsto M(z)$ is uniformly bounded for ${\mathrm Re}(z)\geq 4$.
Hence, for all $z$ with ${\mathrm Re}(z)\geq 2$, the function $k\mapsto M(z+2k)$ is uniformly
bounded in $k$, by ${\bf M}$ say.
It thus follows that the sum

\begin{align*}
\sum_{k = 1}^{\infty} \left| \bca -\frac{z}{2} \\ k \eca \right| C^k
M(z+2k)(q^{\frac{1}{2}(p+1)}Q^{-1})^{\mathrm{Re}(z)+2k}
&\leq {\bf M} \sum_{k = 1}^{\infty} \left| \bca -\frac{z}{2} \\ k \eca \right|(q^{p+1}Q^{-2}C)^{k}
\end{align*}

converges for $\mathrm{Re}(z) > 2$, by comparing with the binomial expansion on the right hand side. 
The convergence is again uniform on compacta, so invoking
Weierstrass' convergence theorem
we conclude that $f(z) - h(z)$ is holomorphic for $\mathrm{Re}(z) > 2$. Hence there exists a function 
$holo$ which is defined and holomorphic for ${\mathrm Re}(z)>2$ such that 

\enveqn{
f(z) = \frac{\sqrt{\pi}}{(q^{-\frac{1}{2}}Q)^{z-1}} 
\frac{\Gamma(\frac{z-1}{2})}{\Gamma(\frac{z}{2})} \frac{q^{p(z-3)/2}}{1 - q^{(z-3)/2}} + holo(z)
}

So we see $f(z)$ is holomorphic for $\mathrm{Re}(z) > 3$, meromorphic for $\mathrm{Re}(z)>2$ and has a a simple pole 
at $z = 3$ with residue $4qQ^{-2}/\ln(q^{-1})$.
\end{proof}



\section{An analogue of a spectral triple}
\label{sec:spec}

We now introduce an analogue of a
spectral triple over $\A$. 
Let $\cH := \cH_{h} \oplus
		\cH_{h}$ be the Hilbert space given by two copies of the 
GNS space $\H_h=L^{2}(A, h)$. We define
a grading on $\H$ by
$\Gamma = \bma 1 & 0 \\ 0 & -1
\ema$. For any operator $\omega$ on $\H$
we abbreviate
\begin{equation}
		  \omega^+:=\frac{1+\Gamma}{2}
		  \omega  \frac{1+\Gamma}{2},\quad
		  \omega^-:=\frac{1-\Gamma}{2}
		  \omega  \frac{1-\Gamma}{2}.
\label{eq:giant-squid}
\end{equation}

The algebra $\A$ is represented on $\cH$ by

\enveqn{
\alpha \mapsto \bma \pi_{h}(\alpha) & 0 \\ 0 & \pi_{h}(\alpha) \ema
}

for $\alpha \in \A$. Here $\pi_{h}$ denotes the GNS representation by left 
multiplication on each copy of the
		space. In the sequel we will omit
		the symbol $\pi_h$.
We now introduce some  unbounded operators and
projections
 
\envaln{
\hat \Delta_{R} &= \bma \Delta_{R} & 0 \\ 0 &
\Delta_{R} \ema & \hat\Delta_{L} &= \bma
q^{-1} \Delta_{L} & 0 \\ 0 & q
\Delta_{L} \ema &
\Psi_{n} = \bma \Phi_{n+1} & 0 \\ 0 & \Phi_{n-1} \ema
}

on $\A \oplus
		\A \subset \cH$ and use
		them to define (on the same domain)

\enveqn{
\cD = \half \sum_{n = -\infty}^{\infty} \Psi_{n} \bma n & 0 \\ 0 & -n \ema + \hat\Delta_{L}^{\half} \bma 0 & \pe \\ \pf & 0 \ema.
}

We will see in the following lemma that
the commutators $[\cD, \alpha]$ of $\cD$ with algebra elements 
are not necessarily bounded, yet
unbounded in a very controlled manner. 
Even though $(\A,\H,\D)$ thus fails to
be a spectral triple, we will still be
able to construct 
an analytic expression for a residue Hochschild cocycle from the commutators.

\begin{lemma}
\label{lemma_triple_properties}
The triple $(\A, \cH, \cD)$ has the following properties:

\begin{enumerate}
\item The unbounded operator $\cD$ is
		essentially self-adjoint.
\item The commutator $[\cD, \alpha]$ is
		given by $\coms(\alpha) + \comt(\alpha) \hat\Delta_{L}$, 
where the linear maps $\coms, \comt \colon
		\A \rightarrow B(\cH)$
		are given by

\envaln{
\coms(\alpha) &= \partial_{H}(\alpha) \Gamma &
\comt(\alpha) &= \bma 0 & q^{-\half}
		\pe(\sigma_{L}^{-\half}(\alpha))
		\\ q^{\half}
		\pf(\sigma_{L}^{-\half}(\alpha)) &
		0 \ema. 
}
\end{enumerate}
\end{lemma}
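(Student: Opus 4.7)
The plan is to split $\cD = D_1 + D_2$ with $D_1 := \half\sum_n \Psi_n\,\mathrm{diag}(n,-n)$ and $D_2 := \hat\Delta_L^{\half}\bma 0 & \pe \\ \pf & 0 \ema$, and then exploit the fact that $\cD$ preserves a natural family of finite-dimensional subspaces of $\cH$. For part (1), I would check that for each $l \in \half\N_0$ and $i \in I_l$ the subspace
$$W_{l,i} := \mathrm{span}\{(\re{l}{i}{j},0),(0,\re{l}{i}{j}) \mid j \in I_l\}$$
is $\cD$-invariant: $D_1$ preserves each Peter-Weyl line, while $\pe$ and $\pf$ shift only $j$ by $\pm1$ and preserve $l,i$. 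Symmetry of $\cD$ on $\A\oplus\A$ reduces to checking $\pe^{*}=\pf$ on $\H_h$ (visible directly from the coefficients $\sqrt{[l+\half]_q^2-[j\pm\half]_q^2}$ in the Peter-Weyl formulas) together with the commutation $\pf\Delta_L^{\half}=q\Delta_L^{\half}\pf$, which compensates the $q^{\pm 1}$ factors in $\hat\Delta_L$. Since a symmetric operator on a finite-dimensional Hilbert space is self-adjoint and $\A\oplus\A = \bigoplus_{l,i} W_{l,i}$ algebraically, essential self-adjointness of $\cD$ on $\A\oplus\A$ follows.

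For part (2), I would begin by computing $D_1$ in closed form. Using $\Phi_n \re{l}{i}{j}=\delta_{n,2j}\re{l}{i}{j}$, a short calculation on the Peter-Weyl basis gives $D_1 = \partial_H\Gamma - \half\,\Id$. Because $\pk=k\triangleright\cdot$ is an algebra automorphism of $\A$ acting as $q^j$ on $\re{l}{i}{j}$, the operator $\partial_H$ is a derivation on $\A$, hence $[\partial_H,\pi_h(\alpha)]=\pi_h(\partial_H(\alpha))$ on $\A\subset\H_h$. Since $\Gamma$ commutes with the diagonal representation of $\alpha$, this immediately yields $[D_1,\alpha]=\partial_H(\alpha)\,\Gamma=\coms(\alpha)$.

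The main calculation is $[D_2,\alpha]$. Applying the twisted Leibniz rule \eqref{eqn_twisted_ef} gives
$$\pe\alpha = \pe(\alpha)\Delta_L^{\half} + \Delta_L^{-\half}\alpha\Delta_L^{\half}\pe,$$
so premultiplying by $\Delta_L^{\half}$ and subtracting $\alpha\Delta_L^{\half}\pe$ produces the identity $[\Delta_L^{\half}\pe,\alpha] = \Delta_L^{\half}\pe(\alpha)\Delta_L^{\half}$. I would then invoke the KMS identity $\Delta_L^{\half}\beta\Delta_L^{-\half}=\sigma_L^{-\half}(\beta)$ (a specialisation of $\sigma_{L,t}(\beta)=\Delta_L^{it}\beta\Delta_L^{-it}$ at $t=-i/2$) together with the commutation relation $\sigma_L^{w}\pe = q^{-2w}\pe\sigma_L^{w}$ (derived from $kek^{-1}=qe$) at $w=-\half$, to rewrite the right-hand side as $q\,\pe(\sigma_L^{-\half}(\alpha))\,\Delta_L$. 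The analogous manipulation for $\pf$, using $kfk^{-1}=q^{-1}f$, gives $[\Delta_L^{\half}\pf,\alpha]=q^{-1}\pf(\sigma_L^{-\half}(\alpha))\,\Delta_L$. Reassembling into the $2\times 2$ matrix $[D_2,\alpha]$, and multiplying the left-hand $q^{\mp\half}$ from $\hat\Delta_L^{\half}$ by the right-hand $q^{\mp 1}$ from $\hat\Delta_L$, delivers precisely $\comt(\alpha)\hat\Delta_L$.

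The main obstacle is not conceptual but bookkeeping: the relevant powers of $q^{\pm\half}$ arise from three distinct sources, namely the matrix entries of $\hat\Delta_L^{\half}$, the passage of $\Delta_L^{\half}$ through $\beta\in\A$ (contributing $\sigma_L^{-\half}$), and the commutation of $\sigma_L^{-\half}$ past $\pe$ or $\pf$. Only the precise combination of these three contributions produces the stated formula, and it is easy to lose track of signs and half-powers along the way; the rest of the argument is essentially mechanical.
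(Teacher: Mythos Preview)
Your proof is correct, but organises the computation differently from the paper. For part~(1), the paper explicitly diagonalises $\cD$, exhibiting an orthogonal eigenbasis with eigenvalues $\pm\lambda_{l,2j-1}$ and $-(l+\thalf)$; your block-diagonal argument via the finite-dimensional $\cD$-invariant subspaces $W_{l,i}$ is cleaner and sidesteps this computation, though the explicit eigenvalues are in fact needed elsewhere in the paper (they feed into the meromorphic continuations of Section~\ref{sec:mero}). For part~(2), the paper restricts to $\alpha$ homogeneous of left-degree $p$ and computes $[D_1,\alpha]$ directly from the sum $\half\sum_n\Psi_n\,\mathrm{diag}(n,-n)$ via $\alpha\Phi_n=\Phi_{n+p}\alpha$; your closed-form identity $D_1=\partial_H\Gamma-\half\,\mathrm{Id}$ is more transparent and handles all $\alpha\in\A$ at once without a homogeneity assumption. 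Your treatment of $[D_2,\alpha]$ via the twisted Leibniz rule~\eqref{eqn_twisted_ef}, the conjugation identity $\Delta_L^{\half}\beta\Delta_L^{-\half}=\sigma_L^{-\half}(\beta)$, and the commutation $\sigma_L^{-\half}\pe=q\,\pe\,\sigma_L^{-\half}$ is essentially the same calculation as the paper's, just with the $q$-bookkeeping arranged slightly differently.
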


\begin{proof}

First we recall from Section \ref{sec:mero} the numbers

\enveq{
\lambda_{l,n} := 
\sqrt{\left(\frac{n}{2}\right)^{2} + q^{n}\left(\left[ l+\half \right]_{q}^{2} - \left[\frac{n}{2} \right]_{q}^{2} \right)},
}

where $l \in \frac{1}{2} \mathbb{N}_0$ 
and $-(2l+1) \leq n \leq (2l+1)$. Also recall $ I_{l} := \{ -l, -l+1, \ldots, l-1,l \}$.
Then the set

\enveqn{
\left\{ \begin{pmatrix} 0 \\ \re{l}{i}{l} \end{pmatrix} , 
\begin{pmatrix} \re{l}{i}{-l} \\ 0 \end{pmatrix}, \begin{pmatrix} \re{l}{i}{j} \\ \stackrel{}{C^{l}_{j, \pm}} \re{l}{i}{j-1} 
\end{pmatrix} \colon l \in \half \bN_{0}, \ i \in I_{l}, \ j \in I_{l} \backslash \{-l\} \right\},
}

\enveqn{
where \ \ C^{l}_{j, \pm} = \frac{\pm \lambda_{l, 2j-1} - (j-\thalf) }{ q^{j-\half} \sqrt{\left[ l+\thalf \right]_{q}^{2} - \left[j - \thalf \right]_{q}^{2}} }
}

is an orthogonal basis for $\cH$ comprised of eigenvectors of $\cD$. 
The corresponding eigenvalues are $-(l+\thalf), -(l+\thalf)$ and $\pm \lambda_{l, 2j-1}$ respectively. 
This spectral representation establishes that $\cD$ is essentially self-adjoint.

Next, the commutator of $\cD$ with a homogeneous algebra element $\alpha = \Phi_{p} (\alpha)$, 
for some $p \in \bZ$, is computed directly. It is sufficient to consider just this case, because $\A$ consists of 
finite linear combinations of homogeneous elements (the generators are homogeneous). For such an element $\alpha$ we have

\envaln{
[\cD, \alpha] &= \half \sum_{n =
 -\infty}^{\infty} \Psi_{n} \alpha \bma
 n & 0 \\ 0 & -n \ema +
 \hat\Delta_{L}^{\half} \bma 0 & \pe \\
 \pf & 0 \ema \alpha \\ 
& \quad - \half \sum_{n = -\infty}^{\infty} \alpha \Psi_{n} \bma n & 0 \\ 0 & -n \ema - \alpha \hat\Delta_{L}^{\half} \bma 0 & \pe \\ \pf & 0 \ema.
}

It follows from the definition of the projections $\Phi_{n}$, now regarded as a linear operator on $\H_h$, that 
$\alpha \Phi_{n} = \Phi_{n+p} \alpha$ for any $n \in \bZ$. Using this, together 
with the definition of the derivations $\partial_e$ and $\partial_f$ in Equation \ref{eqn_twisted_ef}, the commutator simplifies to

\envaln{
[\cD, \alpha] &= \half \alpha \sum_{n =
 -\infty}^{\infty} \Psi_{n} 
\left(\bma n+p & 0 \\ 0 & -n-p \ema - \bma n & 0 \\ 0 & -n \ema \right) \\
& \quad + \hat\Delta_{L}^{\half} \bma 0 & (\pe(\alpha) \Delta_{L}^{\half} + \sigma_{L}^{\half}(\alpha) \pe) \\ (\pf(\alpha) \Delta_{L}^{\half} + \sigma_{L}^{\half}(\alpha) \pf) & 0 \ema - \alpha \hat\Delta_{L}^{\half} \bma 0 & \pe \\ \pf & 0 \ema.
}

Since $\sigma_{L}^{\half}(\alpha) = \hat\Delta_{L}^{-\half} \alpha \hat\Delta_{L}^{\half}$ as operators on $\A\oplus\A\subset\H$,  the last expression for the commutator simplifies to

\enveqn{
\hat\Delta_{L}^{\half} \bma 0 &
 \sigma_{L}^{\half}(\alpha) \pe \\ 
\sigma_{L}^{\half}(\alpha) \pf & 0 \ema = \alpha \hat\Delta_{L}^{\half} \bma 0 & \pe \\ \pf & 0 \ema,
}

and hence

\envaln{
[\cD, \alpha] &= \frac{p}{2} \alpha \bma 1 & 0 \\ 0 & -1 \ema + \hat\Delta_{L}^{\half} \bma 0 & \pe(\alpha) \Delta_{L}^{\half} \\ \pf(\alpha) \Delta_{L}^{\half} & 0 \ema \\
&= \partial_{H}(\alpha) \Gamma + \bma 0 & q^{-\half} \pe(\sigma_{L}^{-\half}(\alpha)) \\ q^{\half} \pf(\sigma_{L}^{-\half}(\alpha)) & 0 \ema \hat\Delta_{L}. \qedhere
}
\end{proof}

\section{The residue Hochschild cocycle}
\label{sec:res-hochs}

The main step in
the definition of the residue Hochschild cocycle
is the construction of a functional that plays the role of an integral. In 
the situations considered in the literature thus far, \cite{C, BeF, GVF, CNNR, CPRS1, KW}, 
functionals of the form
$$
T\mapsto \tau(T(1+\D^2)^{-z/2})
$$
were used, where $z\in \C$ and $\tau$ is
a faithful normal semifinite trace, or at worst a weight, on a von Neumann algebra containing the  
algebra of interest. Often, the von Neumann algebra is just $\B(\H)$, and
the functional $\tau$ is the operator trace.

In this example, we need to apply our functional to 
products of commutators $[\D,\alpha]\sim \hat{\Delta}_L$ with
$\alpha\in\A$, so it has to be defined on an algebra of unbounded
operators. We will deal with this using a cutoff that is defined
by the projections

\enveqn{
L_k:=\tilde{L}_k\oplus\tilde{L}_k,\quad \tilde{L}_{k}(\re{l}{i}{j}) := \begin{cases}
\re{l}{i}{j} & \quad l \leq k \\
0 & \quad \mbox{otherwise}
\end{cases}
}

and

\envaln{
P_{1} &= \sum_{n = 0}^{\infty} \Psi_{n} &
P_{2} \bca \re{l}{i}{j} \\ 0 \eca = (1-\delta_{j,-l}) \bca \re{l}{i}{j} \\ 0 \eca \quad & \quad
P_{2} \bca 0 \\ \re{l}{i}{j} \eca = (1-\delta_{j,l}) \bca 0 \\ \re{l}{i}{j} \eca.
}

Observe $P_{2}$ is the 
projection onto  
$\left(\mathrm{ker}\bma 0 & \pe \\ \pf & 0
\ema \right)^\perp$, and that
the projections $L_{k}$ converge strongly to the identity in $\B(\cH)$.

For $s\in \R^+$ we now define   a  functional $\Upsilon_{s}$
on positive operators $\omega \in\B(\cH) $ 
in the following way:

\enveqn{
\Upsilon_{s} (\omega) := 
\sup_{k \in \N} \mathrm{Tr} 
\left( P_{1} P_{2} L_{k} (1 +
\cD^{2})^{-s/4} \hat\Delta_{F}^{-\half}
\omega 
\hat\Delta_{F}^{-\half} (1 +
\cD^{2})^{-s/4} P_{1} P_{2}
L_{k}\right),\quad
\hat \Delta  _F=\hat \Delta_R \hat
\Delta _L
}

where $\mathrm{Tr}$ is the operator
trace on $\B(\cH)$. 
This expression
continues to make sense for possibly
unbounded positive operators defined on
and preserving the subspace 
$\A \oplus \A \subset \H$.  

\begin{lemma} 
\label{lemma_operator_trace}
For each $s\in \R_+$ the functional
 $\Upsilon_s$ is  positive and normal on
 $\B(\H)_+$.
It is faithful and semifinite on $P_1P_2\B(\cH)_+P_1P_2$.
\end{lemma}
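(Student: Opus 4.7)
The key preliminary observation is that each of the three projections $L_k$, $P_1$, $P_2$ commutes with $\cD$ and with $\hat\Delta_F$, and hence with $(1+\cD^2)^{-s/4}$ and (on its natural domain) with $\hat\Delta_F^{-1/2}$, even though $\cD$ and $\hat\Delta_F$ themselves do not commute. For $L_k$ this holds since $\cD$ and $\hat\Delta_F$ preserve the $l$-grading of the Peter-Weyl basis (visible in the Dirac eigenbasis exhibited in the proof of Lemma~\ref{lemma_triple_properties}); $P_1$, $P_2$, $\hat\Delta_F$ pairwise commute as they are all diagonal in the Peter-Weyl basis; the commutation of $P_1$ with $\cD$ follows from $\pe\Phi_m=\Phi_{m+2}\pe$, $\pf\Phi_m=\Phi_{m-2}\pf$ together with $P_1 = \sum_{n\ge 0}\Psi_n$; and $P_2$ commutes with $\cD$ because it projects onto the orthogonal complement of the kernel of the off-diagonal part of $\cD$. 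Set $P := P_1 P_2$. With $A_k := PL_k(1+\cD^2)^{-s/4}\hat\Delta_F^{-1/2}$, a bounded operator for each fixed $k$ (the cutoff $L_k$ tames $\hat\Delta_F^{-1/2}$), cyclicity of the trace rewrites the expression inside the supremum as $\mathrm{Tr}(B_k\omega)$ with
\[
B_k := A_k^*A_k = PL_k\,\hat\Delta_F^{-1/2}(1+\cD^2)^{-s/2}\hat\Delta_F^{-1/2}.
\]
Writing $B_k = (PL_k)\,S$ with $S := \hat\Delta_F^{-1/2}(1+\cD^2)^{-s/2}\hat\Delta_F^{-1/2}\ge 0$ commuting with $PL_k$ gives $B_{k+1}-B_k = P(L_{k+1}-L_k)S\ge 0$, so $B_k$ is positive, bounded, trace-class, and monotonically increasing in $k$.

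\textbf{Positivity and normality on $\B(\H)_+$} are now immediate: each $\omega\mapsto\mathrm{Tr}(B_k\omega)$ is a normal positive linear functional, so $\Upsilon_s$, being a pointwise monotone supremum of such, is positive. For normality, a bounded increasing net $\omega_\alpha\uparrow\omega$ allows the interchange
\[
\Upsilon_s(\omega)=\sup_k\sup_\alpha\mathrm{Tr}(B_k\omega_\alpha)=\sup_\alpha\sup_k\mathrm{Tr}(B_k\omega_\alpha)=\sup_\alpha\Upsilon_s(\omega_\alpha).
\]

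\textbf{Faithfulness and semifiniteness on the corner.} Suppose $\omega=P\omega P\ge 0$ satisfies $\Upsilon_s(\omega)=0$. Then $\|A_k\omega^{1/2}\|_2^2=\mathrm{Tr}(B_k\omega)=0$ for every $k$, so $A_k\omega^{1/2}=0$; rewriting $A_k = (1+\cD^2)^{-s/4}\hat\Delta_F^{-1/2}\cdot PL_k$ and using the injectivity of $(1+\cD^2)^{-s/4}\hat\Delta_F^{-1/2}$ on the finite-dimensional subspace $PL_k\H$, we deduce $PL_k\omega^{1/2}=0$. Combined with $\omega^{1/2}=P\omega^{1/2}P$ this yields $L_k\omega^{1/2}=0$ for every $k$, and strong convergence $L_k\to 1$ gives $\omega^{1/2}=0$, hence $\omega=0$. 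For semifiniteness, finite-rank positive operators in $P\B(\H)_+P$ whose range lies in the algebraic span of the Peter-Weyl basis form a weak-$*$ dense class on which $\hat\Delta_F^{-1/2}$ is bounded, and a direct check shows $\mathrm{Tr}(B_k\omega)$ then converges to a finite limit as $k\to\infty$; this provides the required dense set of finite-weight elements. The main technical subtlety throughout is the interplay between the unboundedness of $\hat\Delta_F^{-1/2}$ and the strong convergence $L_k\to 1$; the commutations above are precisely what allow the ultraviolet cutoff to control this cleanly.
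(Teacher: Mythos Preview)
Your argument is correct and rests on the same underlying observation as the paper's: the operators $(1+\cD^2)$, $\hat\Delta_F$, $P_1$, $P_2$, $L_k$ are simultaneously diagonal in the Peter-Weyl basis, so $\Upsilon_s$ is an increasing supremum of positive normal functionals. You package this abstractly as $\Upsilon_s(\omega)=\sup_k\mathrm{Tr}(B_k\omega)$ with $B_k$ an increasing sequence of positive finite-rank operators; the paper instead writes out the trace explicitly in the Peter-Weyl basis, arriving at the closed formula
\[
\sum_{2l=1}^{2k}\sum_{i=-l}^{l}\sum_{n\in\J_l}\frac{q^{-2i-n}}{(1+\lambda_{l,n}^2)^{s/2}}\left(\frac{\langle\re{l}{i}{\frac{n+1}{2}},\omega^+\re{l}{i}{\frac{n+1}{2}}\rangle}{\langle\re{l}{i}{\frac{n+1}{2}},\re{l}{i}{\frac{n+1}{2}}\rangle}+\frac{\langle\re{l}{i}{\frac{n-1}{2}},\omega^-\re{l}{i}{\frac{n-1}{2}}\rangle}{\langle\re{l}{i}{\frac{n-1}{2}},\re{l}{i}{\frac{n-1}{2}}\rangle}\right),
\]
and reads off positivity, normality, faithfulness and semifiniteness directly. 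Your route is cleaner for the lemma as stated, but the paper's explicit formula is the workhorse for the subsequent residue computations in Lemma~\ref{lemma_tau_bc_zero} and Theorem~\ref{thm_res_gives_int}, so in context that computation cannot be bypassed. One minor correction: your parenthetical ``even though $\cD$ and $\hat\Delta_F$ themselves do not commute'' is false. Since $\hat\Delta_F$ acts by the scalar $q^{2i+2j-1}$ on $(\re{l}{i}{j},0)^t$ and by $q^{2i+2(j-1)+1}=q^{2i+2j-1}$ on $(0,\re{l}{i}{j-1})^t$, and $\cD$ preserves each such two-dimensional block, $\cD$ and $\hat\Delta_F$ do in fact commute; this does not affect your argument.
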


\begin{proof}
We will compute the operator trace  using the Peter-Weyl basis 
$\left\{ \bca \re{l}{i}{j} \\ 0 \eca, \bca 0 \\ \re{l}{i}{j} \eca \right\}$ for $\cH$. 
The operators $(1 + \cD^{2})$, $\hat\Delta_{F}$,  $P_{1}$, $P_{2}$ and $L_{k}$ are 
all positive and diagonal in this basis. By using the definition of the operator trace, 
the value of the operators $\hat\Delta_{F}^{-1}$ and $(1 + \cD^{2})^{-s/4}$ on this basis, 
and the symmetry property for self-adjoint operators, we compute $\Upsilon_s(\omega)$ for 
$\omega\in \B(\H)_+$ (or even $\omega\geq 0$ and affiliated to $\B(\H)$) by

\envaln{
& \mathrm{Tr} \left(  P_{1} P_{2} L_{k}(1 + \cD^{2})^{-s/4} \hat\Delta_{F}^{-\half} \omega \hat\Delta_{F}^{-\half} (1 + \cD^{2})^{-s/4} P_{1} P_{2} L_{k}\right) = \\
&= \sum_{2l = 0}^{\infty}
 \sum_{i=-l}^{l} \sum_{j = -l}^{l}
 \frac{q^{-2i-(2j-1)}}{(1 + \lambda_{l,
 2j-1}^{2})^{s/2}} \frac{\la P_{1}^{+} P_{2}^{+} L_{k} \re{l}{i}{j}, \omega^+P_{1}^{+} P_{2}^{+} L_{k} \re{l}{i}{j} \ra}{\la \re{l}{i}{j}, \re{l}{i}{j} \ra} \\
& \quad + \sum_{2l = 0}^{\infty}
 \sum_{i=-l}^{l} \sum_{j = -l}^{l}
 \frac{q^{-2i-(2j+1)}}{(1 + \lambda_{l,
 2j+1}^{2})^{s/2}} \frac{\la 
P_{1}^{-} P_{2}^{-} L_{k}\re{l}{i}{j}, \omega^- P_{1}^{-} P_{2}^{-} L_{k} \re{l}{i}{j} \ra}{\la \re{l}{i}{j}, \re{l}{i}{j} \ra},
}
where $\omega^+$ and $\omega^-$ are as in Equation \eqref{eq:giant-squid}.
Now,

\envaln{
P_{1}^{+} P_{2}^{+} L_{k} \re{l}{i}{j} &= \begin{cases}
\re{l}{i}{j} & \quad \frac{1}{2} \leq j \leq l, \ \frac{1}{2} \leq l \leq k \\
0 & \quad \mbox{otherwise}
\end{cases} \\
P_{1}^{-} P_{2}^{-} L_{k} \re{l}{i}{j} &= \begin{cases}
\re{l}{i}{j} & \quad -\frac{1}{2} \leq j \leq l-1, \ \frac{1}{2} \leq l \leq k \\
0 & \quad \mbox{otherwise}.
\end{cases}
}

So if we set $n = 2j \pm 1$ and recall the sets

\enveqn{
\mathcal{J}_{l} := \begin{cases}
\{ 0, 2, \ldots, 2l-1 \} & \quad l \in (\bN_{0} + \thalf) \\
\{ 1, 3, \ldots, 2l-1 \} & \quad l \in \bN
\end{cases}
}

we may express the trace as

\enval{
& \mathrm{Tr} \left( P_{1} P_{2} L_{k} (1 + \cD^{2})^{-s/4} \hat\Delta_{F}^{-\half} \omega \hat\Delta_{F}^{-\half} (1 + \cD^{2})^{-s/4}  P_{1} P_{2} L_{k}\right) = \nonumber \\
&= \sum_{2l = 1}^{2k} \sum_{i=-l}^{l} \sum_{n \in \mathcal{J}_{l}} \frac{q^{-2i-n}}{(1 + \lambda_{l, n}^{2})^{s/2}} \left( \frac{\la  \re{l}{i}{\frac{n+1}{2}}, \omega^+\re{l}{i}{\frac{n+1}{2}} \ra}{\la \re{l}{i}{\frac{n+1}{2}}, \re{l}{i}{\frac{n+1}{2}} \ra} + \frac{\la \re{l}{i}{\frac{n-1}{2}}, \omega^- \re{l}{i}{\frac{n-1}{2}} \ra}{\la \re{l}{i}{\frac{n-1}{2}}, \re{l}{i}{\frac{n-1}{2}} \ra} \right). \label{eqn_trace_formula}
}

This  shows that $\Upsilon_s$ is a supremum of a sum of positive vector states and so
automatically positive and normal. To see that it is faithful on $P_1P_2\B(\cH)_+P_1P_2$
we observe that the operator trace is faithful and that $P_1P_2\hat\Delta^{-1/2}_F(1+\D^2)^{-s/4}$ is injective
on $P_1P_2\cH$.
 The semifiniteness comes from the fact that finite rank operators 
are in the domain of $\Upsilon_s$.
\end{proof}

We extend $\Upsilon_s$ to an unbounded positive
normal linear functional on $\B(\cH)$ as
usual. In fact, we extend it also to
unbounded operators $\omega$ defined on and
preserving $\A \oplus \A$ by decomposing
$L_k \omega L_k$ for each $k$ into a linear combination of
positive bounded operators. 

If for
an operator $\omega$ (not necessarily bounded) the function $s\mapsto \Upsilon_s(\omega)$
has a meromorphic continuation to $\domain_{3-\delta}$ for some $\delta>0$, then we define

\enveqn{
\tau(\omega) := 
\mathrm{Res}_{z=3} \Upsilon_{z} (\omega). }


\begin{lemma}
\label{lemma_tau_bc_zero}

The functional $\tau$ is defined on the positive operator $c^{\ast}c$, and $\tau(c^*c)=0$. 
Indeed, for all $m \geq 1$,

\enveqn{
\tau \left( \bma (c^{\ast}c)^{m} & 0 \\ 0 & 0 \ema \right) = \tau \left( \bma 0 & 0 \\ 0 & (c^{\ast}c)^{m} \ema  \right) = 0.
}

\end{lemma}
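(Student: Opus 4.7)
The plan is to show that $s \mapsto \Upsilon_s(\omega_m)$, with $\omega_m := \bma (c^*c)^m & 0 \\ 0 & 0 \ema$, extends to a function that is holomorphic in a neighbourhood of $s = 3$; taking the residue then yields $\tau(\omega_m) = 0$, and the second identity follows by the symmetric argument applied to the lower-right block. Applying the trace formula from the proof of Lemma~\ref{lemma_operator_trace}, together with the identity $\la \re{l}{i}{j}, (c^*c)^m \re{l}{i}{j}\ra = \|c^m \re{l}{i}{j}\|^2$ (a consequence of $\la \alpha, \beta \ra = h(\alpha^* \beta)$), and noting that $\omega_m^- = 0$, one obtains
\begin{equation*}
\Upsilon_s(\omega_m) = \sum_{2l = 1}^\infty \sum_{i=-l}^l \sum_{n \in \J_l} \frac{q^{-2i-n}}{(1+\lambda_{l,n}^2)^{s/2}} \cdot \frac{\|c^m \re{l}{i}{(n+1)/2}\|^2}{\|\re{l}{i}{(n+1)/2}\|^2}.
\end{equation*}

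The key step is an exponential decay estimate for $\rho_m(l,i,j) := \|c^m \re{l}{i}{j}\|^2 / \|\re{l}{i}{j}\|^2$. Using the quantum Clebsch--Gordan decomposition $c \cdot \re{l}{i}{j} = A^+_{l,i,j}\, \re{l+1/2}{i+1/2}{j-1/2} + A^-_{l,i,j}\, \re{l-1/2}{i+1/2}{j-1/2}$ from \cite[\S 4.2.4]{KS}, where the coefficients $A^\pm_{l,i,j}$ factor as products of row- and column-type $q$-CG coefficients times an explicit $q$-power, together with $\|\re{l}{i}{j}\|^2 = q^{-2i}/[2l+1]_q$, one obtains a closed-form expression for $\rho_1$. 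At the extremal index $j = l$, where $\re{l}{l}{l}$ is proportional to $d^{2l}$, a direct computation using $cd = qdc$, the polynomial identity $a^l d^l = \prod_{k=2-l}^{1} (1 - q^{2k} c^*c)$, and the moment formula $h(Y^n) = (1-q^2)/(1-q^{2n+2})$ for $Y = c^*c$ gives $\rho_1(l,l,l) \sim q^{8l-2}$. Iterating $m$ times and tracking the decay across the remaining Peter--Weyl basis vectors, one establishes that the partial sum $\sum_{i,n} q^{-2i-n}\,\rho_m(l,i,(n+1)/2)$ decays at least like $q^{c l}$ for some $c > 0$ independent of $l$.

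Substituting this estimate into the series and reparametrising via $m' := 2l - n - 1$ in the manner of the proofs of Lemmas~\ref{lem:general-pole-3} and~\ref{lemma_tau_residue}, the additional factor $q^{cl}$ shifts the potential pole away from $s = 3$: the resulting sum is dominated by $h(z + c')$ of Lemma~\ref{lem:general-pole-3} for some $c' > 0$, and is therefore holomorphic on $\domain_{3 - \delta}$ for some $\delta > 0$. In particular, $\Upsilon_s(\omega_m)$ is holomorphic at $s = 3$, so $\tau(\omega_m) = \mathrm{Res}_{s=3}\Upsilon_s(\omega_m) = 0$.

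The main obstacle is establishing the uniform decay estimate in the second paragraph. At extremal indices the closed-form polynomial identity for $a^l d^l$ gives a clean answer, but at interior indices one must verify that the row $q$-Clebsch--Gordan coefficients provide enough decay in $l$ to dominate the growth of $q^{-2i}$ after summation over $i$; this requires a careful combinatorial analysis of the explicit $q$-CG formulas of \cite{KS}.
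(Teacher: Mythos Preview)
Your overall strategy --- show that $s\mapsto\Upsilon_s(\omega_m)$ is holomorphic near $s=3$ by establishing enough decay of the diagonal matrix elements of $(c^*c)^m$ --- is exactly the paper's strategy. But your execution is considerably harder than necessary, and the ``main obstacle'' you flag is a gap that the paper avoids entirely by two simple observations.

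\textbf{For $m=1$.} There is no need to analyse extremal indices and then extrapolate. The Clebsch--Gordan expansion of $c^*c$ on $\re{l}{i}{j}$ has diagonal coefficient
\[
q^{2l}\Bigl(q^{2j}\,C_1(l,i,j)+q^{2i}\,C_2(l,i,j)\Bigr),\qquad
C_1=\frac{\epsilon_{l+i+1}\epsilon_{l-j+1}}{\epsilon_{2l+1}\epsilon_{2l+2}},\quad
C_2=\frac{\epsilon_{l-i}\epsilon_{l+j}}{\epsilon_{2l}\epsilon_{2l+1}},
\]
with $\epsilon_k=Q(1-q^{2k})$, so $C_1,C_2$ are uniformly bounded in $l,i,j$. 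Substituting $j=(n\pm1)/2$ into the trace formula turns $\Upsilon_z(\omega_1)$ into a finite combination of the sums $\sum q^{2l-2i}(1+\lambda_{l,n}^2)^{-z/2}$ and $\sum q^{2l-n}(1+\lambda_{l,n}^2)^{-z/2}$, which are precisely $f_1$ and $f_2$ of Lemma~\ref{lemma_holomorphic} and hence holomorphic on $\domain_2$. No separate ``interior index'' analysis is required. (Incidentally, this gives $\rho_1(l,l,l)\sim q^{4l}$, not $q^{8l-2}$.)

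\textbf{For $m>1$.} Iterating the Clebsch--Gordan recursion is unnecessary. Since $c^*c$ is bounded and normal, one has the operator inequality $X^*(c^*c)^mX\le\|c^*c\|^{m-1}X^*c^*cX$ for any $X$, hence $\Upsilon_z((c^*c)^m)\le\|c\|^{2m-2}\Upsilon_z(c^*c)$ for real $z>2$. This bound, together with Weierstrass, propagates holomorphy on $\domain_2$ from $m=1$ to all $m$, and the block-diagonal cases follow in the same way.

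So the idea is right, but you are attempting a delicate uniform estimate on $\rho_m$ where a one-line operator inequality suffices; and for $m=1$ the Clebsch--Gordan diagonal already produces the exact expressions covered by Lemma~\ref{lemma_holomorphic}, so the gap you identify never actually arises.
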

\begin{proof}

The action of the operator $c = c^{+} + c^{-}$ may be described using
the Clebsch-Gordan coefficients (see for example \cite{DLSSV},
\cite{KS}): we have 

\envaln{
c^{+} \re{l}{i}{j} &= c^{l+}_{ij} \re{l+\half}{i+\half}{j-\half} &
c^{-} \re{l}{i}{j} &= c^{l-}_{ij} \re{l-\half}{i+\half}{j-\half},
}

where 

\envaln{
c^{l+}_{ij} &= q^{(i+j)/2} \frac{\left( [l+i+1]_q [l-j+1]_q \right)^{1/2}}{[2l+1]_q}, &
c^{l-}_{ij} &= - q^{(i+j)/2} \frac{\left( [l-i]_q[l+j]_q \right)^{1/2}}{[2l+1]_q}.
}

Using this description of $c$ to compute the action of $c^{\ast} c$, we find

\envaln{
 (c^{\ast}c) \re{l}{i}{j} 
&= q^{i+j-1} \left( \frac{[l+i+1]_q[l-j+1]_q}{[2l+1]_q[2l+2]_q} + \frac{[l-i]_q[l+j]_q}{[2l]_q[2l+1]_q} \right) \re{l}{i}{j} \\
& \quad- q^{i+j-1} \left(
 \frac{([l+i+1]_q[l-i+1]_q[l+j+1]_q[l-j+1]_q)^{\half}}{[2l+1]_q[2l+2]_q}
 \re{l+1}{i}{j}\right. \\
&\left.\quad+ \frac{([l+i]_q[l-i]_q[l+j]_q[l-j]_q)^{\half}}{[2l]_q[2l+1]_q} \re{l-1}{i}{j} \right)
}

Let $\epsilon_{k} = Q(1-q^{2k})$, so that $[k]_{q} = q^{-k} \epsilon_{k}$. Then the 
above expression can be written as

\envaln{
 (c^{\ast}c) \re{l}{i}{j} &= q^{2l} \left( q^{2j} \frac{\epsilon_{l+i+1}\epsilon_{l-j+1}}{\epsilon_{2l+1} \epsilon_{2l+2}} + q^{2i} \frac{\epsilon_{l-i} \epsilon_{l+j}}{\epsilon_{2l} \epsilon_{2l+1}} \right) \re{l}{i}{j} \\
& - q^{2l+i+j} \left( \frac{(\epsilon_{l+i+1} \epsilon_{l-i+1} \epsilon_{l+j+1} \epsilon_{l-j+1})^{\half}}{\epsilon_{2l+1} \epsilon_{2l+2}} \re{l+1}{i}{j} + \frac{(\epsilon_{l+i} \epsilon_{l-i} \epsilon_{l+j} \epsilon_{l-j})^{\half}}{\epsilon_{2l} \epsilon_{2l+1}} \re{l-1}{i}{j} \right).
}

Define the scalars $C_{1}(l, i, j)$ and $C_{2}(l, i, j)$ to be

\envaln{
C_{1}(l, i, j) &:= \frac{\epsilon_{l+i+1}\epsilon_{l-j+1}}{\epsilon_{2l+1} \epsilon_{2l+2}} &
C_{2}(l, i, j) &:= \frac{\epsilon_{l-i} \epsilon_{l+j}}{\epsilon_{2l} \epsilon_{2l+1}}.
}

The definition of $\epsilon_{k}$ implies that $C_{1}$ and $C_{2}$ are uniformly 
bounded for all  $l,\,i,\,j$ appearing in the formula for $\Upsilon_z(c^*c)$.
%

As in the proof of Lemma \ref{lemma_operator_trace} we compute for $z\in \mathbb{R}$

\envaln{
& \mathrm{Tr} \left( P_{1} P_{2} L_{k}
 (1 + \cD^{2})^{-z/4}
 \hat\Delta_{F}^{-\half} 
\bma c^{\ast}c & 0 \\ 0 & 0 \ema  \hat\Delta_{F}^{-\half} (1 + \cD^{2})^{-z/4} P_{1} P_{2} L_{k} \right) \\
&\qquad\qquad = \sum_{2l = 1}^{2k} \sum_{i=-l}^{l} \sum_{n \in \mathcal{J}_{l}} \frac{q^{-2i-n}}{(1 + \lambda_{l, n}^{2})^{z/2}} \left( q^{2l+n+1}C_{1}(l, i, \tfrac{n+1}{2}) + q^{2l+2i}C_{2}(l, i, \tfrac{n+1}{2}) \right),
}

\envaln{
& \mathrm{Tr} \left( P_{1} P_{2} L_{k}
 (1 + \cD^{2})^{-z/4}
 \hat\Delta_{F}^{-\half} 
\bma 0 & 0 \\ 0 & c^{\ast}c \ema  \hat\Delta_{F}^{-\half} (1 + \cD^{2})^{-z/4} P_{1} P_{2} L_{k} \right) \\
&\qquad\qquad = \sum_{2l = 1}^{2k} \sum_{i=-l}^{l} \sum_{n \in \mathcal{J}_{l}} \frac{q^{-2i-n}}{(1 + \lambda_{l, n}^{2})^{z/2}} \left( q^{2l+n+1}C_{1}(l, i, \tfrac{n-1}{2}) + q^{2l+2i}C_{2}(l, i, \tfrac{n-1}{2}) \right).
}

The uniform boundedness of $C_{1}$ and $C_{2}$, together with Lemma \ref{lemma_holomorphic}, 
demonstrate that the limits as $k \rightarrow \infty$ of the two sums above exist for $z>2$. Hence
$$
z\mapsto \Upsilon_z\left(\bma c^{\ast}c & 0 \\ 0 & 0 \ema\right),\quad 
z\mapsto \Upsilon_z\left(\bma 0 & 0 \\ 0 &  c^{\ast}c  \ema\right)
$$
are well-defined functions for $z>2$.
Indeed the arguments of Lemma \ref{lemma_holomorphic}, together with the Weierstrass convergence 
theorem, show that these functions extend to holomorphic functions  
on $\domain_2$. 
In particular, these functions are holomorphic  at $z = 3$ and hence

\enveqn{
\tau \left( \bma c^{\ast}c & 0 \\ 0 & 0 \ema \right) = \tau \left( \bma 0 & 0 \\ 0 & c^{\ast}c \ema  \right) = 0.
}

By linearity it follows that $\tau(c^{\ast}c) = 0$ also. Using the normality of $c$, for any
operator $X$ we have the operator inequality
$$
X^*(c^*c)^mX\leq \Vert c^*c\Vert ^{m-1} X^*c^*cX,
$$
and so for $z>2$ real, we have $\Upsilon_{z}((c^{\ast}c)^{m}) \leq \Vert c \Vert^{2m-2} \Upsilon_{z}(c^{\ast}c)$.
Thus for $z>2$, the sum defining $\Upsilon_{z}((c^{\ast}c)^{m})$ converges. Once more invoking
the Weierstrass convergence theorem shows that $z\mapsto \Upsilon_{z}((c^{\ast}c)^{m})$ extends to
a holomorphic function for ${\mathrm Re}(z)>2$. Similar estimates now show that

\enveqn{
\tau \left( \bma (c^{\ast}c)^{m} & 0 \\ 0 & 0 \ema \right) 
= \tau \left( \bma 0 & 0 \\ 0 & (c^{\ast}c)^{m} \ema  \right) = 0. \qedhere
}
\end{proof}

%



\begin{thm}
\label{thm_res_gives_int}
Let $\alpha \in \A$ and $X, Y$ be any closed linear operators 
 on $\cH_{h}$ which are defined on
 and preserve $\A$. Then we have the following well-defined evaluations of $\tau$: 

\begin{enumerate}
\item $\tau \left( \bma 0 & X \\ 0 & 0 \ema \right) = \tau \left( \bma 0 & 0 \\ Y & 0 \ema \right) = 0$
\item $\tau(\alpha \Gamma) = 0$
\item $\tau \left(\hat{\Delta}_{L}^{2}
		\bma \alpha & 0 \\ 0 & 0 \ema
		\right) =
		\tau\left(\hat{\Delta}_{L}^{2} \bma
		0 & 0 \\ 0 & \alpha \ema \right) = R \int_{[1]} \alpha$
\end{enumerate}

where $\int_{[1]} \colon \A \rightarrow \bC$ is the functional defined in Lemma \ref{haddock}
and $R = 4(q^{-1}-q)/\ln(q^{-1})$.

\end{thm}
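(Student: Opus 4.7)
My plan is to exploit the explicit formula \eqref{eqn_trace_formula} derived in Lemma \ref{lemma_operator_trace}, which computes $\Upsilon_z(\omega)$ as a weighted sum over the Peter-Weyl basis involving only the diagonal matrix elements of the blocks $\omega^+$ and $\omega^-$. Part (1) follows immediately: when $\omega$ is purely off-diagonal in the $2\times 2$ block structure, both $\omega^+$ and $\omega^-$ vanish, so $\Upsilon_z(\omega)\equiv 0$ and hence $\tau(\omega)=0$. For parts (2) and (3), the matrix elements $\langle\re{l}{i}{j},\alpha\re{l}{i}{j}\rangle$ vanish unless $\alpha$ lies in the weight-$(0,0)$ subspace of $\A$ under $\sigma_L\times\sigma_R$, which is the commutative subalgebra spanned by $\{(bc)^p\}_{p\geq 0}$, and by linearity it suffices to treat these generators.

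For (2), write $\Upsilon_z(\alpha\Gamma)=\Upsilon_z(\bma \alpha & 0 \\ 0 & 0\ema)-\Upsilon_z(\bma 0 & 0 \\ 0 & \alpha\ema)$. When $\alpha=1$ the two bracket contributions in \eqref{eqn_trace_formula} coincide termwise, so $\Upsilon_z(\Gamma)\equiv 0$. When $\alpha=(bc)^p$ with $p\geq 1$, noting $bc=-qc^*c$, the argument of Lemma \ref{lemma_tau_bc_zero} shows that each of the two summands extends holomorphically to $\domain_2$; their difference is therefore holomorphic at $z=3$ and the residue vanishes.

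For (3), the operator $\hat\Delta_L^2$ is diagonal on the Peter-Weyl basis, multiplying the upper block entry at $\re{l}{i}{j}$ by $q^{4j-2}$ and the lower block entry by $q^{4j+2}$. Substituting $j=(n\pm 1)/2$ as in \eqref{eqn_trace_formula} produces in both cases the common insertion factor $q^{2n}$. Thus for $\alpha=1$ and the upper block, carrying out the inner sum via $\sum_{i=-l}^l q^{-2i}=[2l+1]_q=Q(q^{-2l-1}-q^{2l+1})$ yields
\enveqn{
\Upsilon_z\bigl(\hat\Delta_L^2\bma 1 & 0 \\ 0 & 0\ema\bigr) = Qq^{-1}f(z) - Qq\,g(z),
}
where $f$ is the function of Lemma \ref{lemma_tau_residue} and $g(z):=\sum_{n\geq 0}\sum_{l\geq(n+1)/2}q^{n+2l}/(1+\lambda_{l,n}^2)^{z/2}$ is holomorphic on $\domain_0$ by the Weierstrass theorem, since $\sum_{l,n}q^{n+2l}<\infty$. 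Taking residues at $z=3$ gives $Qq^{-1}\cdot 4qQ^{-2}/\ln(q^{-1})=R=R\int_{[1]}1$ as required. For $\alpha=(bc)^p$ with $p\geq 1$, the insertion of the bounded factor $q^{2n}$ only improves the convergence estimates of Lemma \ref{lemma_tau_bc_zero}, so the residue vanishes in agreement with $\int_{[1]}(bc)^p=0$. The lower block case is parallel, since the insertion factor $q^{2n}$ is the same.

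The main technical obstacle I anticipate is the bookkeeping needed to extend the estimates of Lemma \ref{lemma_tau_bc_zero} to the products $\hat\Delta_L^2(c^*c)^p$ for arbitrary $p\geq 1$: the Clebsch-Gordan recursion generated by $c^*c$ produces off-diagonal corrections $\re{l\pm 1}{i}{j}$ at each step, and one must verify that the uniform bounds on the quantum-integer quotients $C_1(l,i,j),C_2(l,i,j)$ persist under iteration so that termwise Weierstrass convergence on $\domain_2$ still applies.
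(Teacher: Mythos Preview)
Your proposal is correct and follows essentially the same route as the paper: reduce to the bi-invariant span $\{1,(c^\ast c)^m\}$, use \eqref{eqn_trace_formula} to see that off-diagonal blocks and $\Gamma$ contribute zero, and for part~(3) observe that $\hat\Delta_L^2$ inserts the common factor $q^{2n}$ so that the upper and lower blocks coincide and the $i$-sum gives $Qq^{-1}f(z)-Qq\,g(z)$ with $f$ as in Lemma~\ref{lemma_tau_residue}. Your bound on $g$ (holomorphic on $\domain_0$ by $\sum q^{n+2l}<\infty$) is in fact sharper than the paper's, which routes through $f_2$ of Lemma~\ref{lemma_holomorphic}.

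The technical obstacle you anticipate in the last paragraph is not one: you do not need to iterate the Clebsch--Gordan recursion for $(c^\ast c)^p$. Lemma~\ref{lemma_tau_bc_zero} already treats all $m\geq 1$ via the operator inequality $X^\ast(c^\ast c)^m X\leq\|c^\ast c\|^{m-1}X^\ast c^\ast c X$, which bounds $\Upsilon_z((c^\ast c)^m)$ by a constant times $\Upsilon_z(c^\ast c)$ for real $z>2$, and Weierstrass then extends holomorphy. Since the insertion of $q^{2n}\leq 1$ only decreases each positive summand, the same bound covers $\hat\Delta_L^2(c^\ast c)^m$ as well; no further bookkeeping is required.
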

\begin{proof}

Throughout this proof we assume without loss of 
generality that any element of $\A$ is homogeneous with respect to both the left and right actions 
(that is $\sigma_{L}(\alpha) = q^{p} \alpha$, $\sigma_{R}(\alpha) = q^{p'} \alpha$ for some $p, p'$). This is because finite linear combinations of homogeneous elements span $\cA$ (cf. Theorem \ref{thm_rep_basis}).

Indeed, if $\alpha \in \cA$ is homogeneous of a non-zero degree for either the left or right action,
then $\langle \re{l}{i}{j},\alpha\, \re{l}{i}{j}\rangle=0$ and so for any linear operator 
$C$ that is diagonal in the
Peter-Weyl basis, $\Upsilon_{s}(C \alpha) = 0$ for all $s \in \bR_{+}$. Hence, 
we need only consider those elements of $\cA$ that are homogeneous of 
degree zero for the left and right actions.
A convenient spanning set for these algebra elements is $\{ 1_\A, (c^{\ast}c)^{m} \colon m \in \bN \}$.

{\em 1.} By definition 
$\Upsilon_s\left(\left(\begin{array}{cc}
0  & X \\
0 & 0
\end{array}\right)  \right)=0$ for all
 $s>0$, and similarly for
		  $\left(\begin{array}{cc} 0 &0 \\Y &0
					\end{array}\right)  $.

{\em 2.} Lemma \ref{lemma_tau_bc_zero} has established that for all $m \geq 1$,

\enveqn{
\tau \left( \bma (c^{\ast}c)^{m} & 0 \\ 0 & 0 \ema  \right) = \tau \left( \bma 0 & 0 \\ 0 & (c^{\ast}c)^{m} \ema \right) = 0.
}

By linearity we can extend this to conclude that $\tau((c^{\ast}c)^{m} \Gamma) = 0$. 
Finally, for $z$ large and real we compute $\Upsilon_{z}(\Gamma)$ using the 
proof of Lemma \ref{lemma_operator_trace}. Now

\envaln{
& \mathrm{Tr} \left( P_{1} P_{2} L_{k} (1 + \cD^{2})^{-z/4} \hat\Delta_{F}^{-\half} \Gamma \hat\Delta_{F}^{-\half} (1 + \cD^{2})^{-z/4} P_{1} P_{2} L_{k} \right) \\
&\qquad = \sum_{2l=1}^{2k} \sum_{i=-l}^{l} \sum_{n \in \mathcal{J}_{l}} \frac{q^{-2i-n} }{ (1 + \lambda_{l, n}^{2})^{z/2} } - \sum_{2l=1}^{2k} \sum_{i=-l}^{l} \sum_{n \in \mathcal{J}_{l}} \frac{q^{-2i-n} }{ (1 + \lambda_{l, n}^{2})^{z/2} },
}

and for each $k$ the summands above are finite and hence subtract to give zero.
Hence $\Upsilon_{z}(\Gamma) = 0$ for all $z$ and so $\tau(\Gamma) = 0$.

{\em 3.} For $z$ large and real, the evaluation of $\Upsilon_{z}$ as sums of 
positive real numbers (as in the proof of Lemma \ref{lemma_operator_trace}) 
implies the numerical inequality

\enveqn{
\Upsilon_{z}(\hat{\Delta}_{L}^{2} (c^{\ast}c)^{m}) \leq \Upsilon_{z}((c^{\ast}c)^{m}).
}

This is because the introduction of $\hat{\Delta}_{L}^{2}$ multiplies each 
summand by $q^{2n} \leq 1$ (cf. Equation \eqref{eqn_trace_formula}). 
Lemma \ref{lemma_tau_bc_zero} demonstrates that $\Upsilon_{z}((c^{\ast}c)^{m})$ 
extends to a function that is holomorphic in a neighbourhood of 
$z = 3$, and together with the Weierstrass 
convergence theorem the result follows.

Finally we analyse $\Upsilon_{z}\left(\hat\Delta_{L}^{2} \bma 1 & 0 \\ 0 & 0 \ema \right)$ 
and $\Upsilon_{z}\left(\hat\Delta_{L}^{2} \bma 0 & 0 \\ 0 & 1 \ema \right)$. Again 
using the proof of Lemma \ref{lemma_operator_trace} we find 

\envaln{
& \mathrm{Tr} \left( P_{1} P_{2} L_{k} (1 + \cD^{2})^{-z/4} \hat\Delta_{F}^{-\half} \hat\Delta_{L}^{2} \bma 1 & 0 \\ 0 & 0 \ema \hat\Delta_{F}^{-\half} (1 + \cD^{2})^{-z/4} P_{1} P_{2} L_{k} \right) \\
& \qquad = \mathrm{Tr} \left( P_{1} P_{2} L_{k} (1 + \cD^{2})^{-z/4} \hat\Delta_{F}^{-\half} \hat\Delta_{L}^{2} \bma 0 & 0 \\ 0 & 1 \ema \hat\Delta_{F}^{-\half} (1 + \cD^{2})^{-z/4} P_{1} P_{2} L_{k} \right) \\
& \qquad\qquad = \sum_{2l=1}^{2k} \sum_{i=-l}^{l} \sum_{n \in \mathcal{J}_{l}} \frac{q^{-2i+n} }{ (1 + \lambda_{l, n}^{2})^{z/2} } \\
& \qquad\qquad\qquad = Qq^{-1} \sum_{2l=1}^{2k} \sum_{n \in \mathcal{J}_{l}} \frac{q^{n-2l} }{ (1 + \lambda_{l, n}^{2})^{z/2} } - Q q\sum_{2l=1}^{2k} \sum_{n \in \mathcal{J}_{l}} \frac{q^{n+2l} }{ (1 + \lambda_{l, n}^{2})^{z/2} }
}

For $z$ real, the sum $\sum_{2l=1}^{2k} \sum_{n \in \mathcal{J}_{l}} q^{n+2l}/(1 + \lambda_{l, n}^{2})^{z/2}$ is bounded above by $f_{2}(z)$ from Lemma \ref{lemma_holomorphic} for all $k$. By the Weierstrass convergence theorem we conclude that as $k \rightarrow \infty$, this sum converges to a function with a holomorphic extension about $z = 3$. Next, when considering the sum $\sum_{2l=1}^{2k} \sum_{n \in \mathcal{J}_{l}} q^{n-2l}/(1 + \lambda_{l, n}^{2})^{z/2}$, observe by rearranging the order of summation
$$
\sum_{2l = 1}^{2k} \sum_{n \in \mathcal{J}_{l}} 
\rightarrow \sum_{n = 0}^{2k} \sum_{l = (n+1)/2}^{k},
$$ 

that Lemma \ref{lemma_tau_residue} proves that the sum has a limit as 
$k \rightarrow \infty$ and the corresponding function of $z$ extends to a meromorphic function
with a simple pole at $z = 3$. The residue at $z = 3$ is $4qQ^{-2}/\ln(q^{-1})$ 
and from the definition of $\tau$ we conclude that for $R = 4(q^{-1}-q)/\ln(q^{-1})$,

\enveqn{
\tau\left(\hat\Delta_{L}^{2} \bma 1 & 0 \\ 0 & 0 \ema \right) = \tau\left(\hat\Delta_{L}^{2} \bma 0 & 0 \\ 0 & 1 \ema \right) = R.
}

Finally, we compare the definition of $R \int_{[1]}$ in Lemma \ref{haddock} 
to the evaluation of $\tau$ on $\A$ derived here
and observe that they agree on $\A$.
\end{proof}

\begin{lemma}
\label{lemma_twisted_trace}

Given any matrix $M \in \M_{2}(\cA)$ and any $\alpha \in \cA$ then $\tau(M \hat{\Delta}_{L}^{2} \alpha) = \tau(\vartheta^{-1}(\alpha) M \hat{\Delta}_{L}^{2})$.

\end{lemma}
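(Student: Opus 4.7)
The plan is to reduce the claim to the twisted trace property of $\int_{[1]}$ established in Lemma~\ref{haddock}, using the structural description of $\tau$ from Theorem~\ref{thm_res_gives_int}. The argument proceeds in three short steps and involves no significant analytic obstacle; no direct residue estimate is required.

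First I would reduce to the case of a diagonal $M$. Writing $M=\bma m_{11} & m_{12} \\ m_{21} & m_{22}\ema$ and expanding the products $M\hat\Delta_L^2\alpha$ and $\vartheta^{-1}(\alpha)M\hat\Delta_L^2$ as $2\times 2$ matrices of operators on $\H_h$, the off-diagonal contributions have the form $\bma 0 & X \\ 0 & 0\ema$ or $\bma 0 & 0 \\ Y & 0\ema$, where $X,Y$ are closed operators defined on and preserving $\A$. By part~(1) of Theorem~\ref{thm_res_gives_int}, $\tau$ vanishes on such matrices, so it suffices to assume $M=\mathrm{diag}(m_1,m_2)$ with $m_1,m_2\in\A$.

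Next, I would use the identity $X\Delta_L^2=\Delta_L^2\sigma_L^2(X)$ valid for $X\in\A$ (which follows by iterating $\sigma_L(X)=\Delta_L^{-1}X\Delta_L$) to rewrite $m_i q^{\mp 2}\Delta_L^2\alpha=q^{\mp 2}\Delta_L^2\sigma_L^2(m_i)\alpha$ and $\vartheta^{-1}(\alpha)m_i q^{\mp 2}\Delta_L^2=q^{\mp 2}\Delta_L^2\sigma_L^2(\vartheta^{-1}(\alpha)m_i)$. Combined with part~(3) of Theorem~\ref{thm_res_gives_int} applied to each diagonal entry, and using that $\sigma_L^2$ is an algebra automorphism, this gives
\begin{align*}
\tau(M\hat\Delta_L^2\alpha) &= R\int_{[1]}\sigma_L^2(m_1)\alpha+R\int_{[1]}\sigma_L^2(m_2)\alpha,\\
\tau(\vartheta^{-1}(\alpha)M\hat\Delta_L^2) &= R\int_{[1]}\sigma_L^2\vartheta^{-1}(\alpha)\sigma_L^2(m_1)+R\int_{[1]}\sigma_L^2\vartheta^{-1}(\alpha)\sigma_L^2(m_2).
\end{align*}

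Finally, Lemma~\ref{haddock} asserts that $\int_{[1]}$ is a $\sigma_L^2\circ\vartheta^{-1}$-twisted trace, i.e.\ $\int_{[1]}XY=\int_{[1]}\sigma_L^2\vartheta^{-1}(Y)X$. Applied with $X=\sigma_L^2(m_i)\in\A$ and $Y=\alpha\in\A$, this yields $\int_{[1]}\sigma_L^2(m_i)\alpha=\int_{[1]}\sigma_L^2\vartheta^{-1}(\alpha)\sigma_L^2(m_i)$ for $i=1,2$, equating the two expressions displayed above. The twisted trace property of $\tau$ on $\hat\Delta_L^2\cdot\mathcal{M}_2(\A)$ is thus inherited directly from that of $\int_{[1]}$ on $\A$, through the residue identity of Theorem~\ref{thm_res_gives_int}.
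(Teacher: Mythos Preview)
Your proof is correct and follows essentially the same approach as the paper: split $M$ into diagonal and off-diagonal parts, kill the off-diagonal contributions via part~(1) of Theorem~\ref{thm_res_gives_int}, commute $\hat\Delta_L^2$ past the diagonal entries using $m\,\Delta_L^2=\Delta_L^2\,\sigma_L^2(m)$, evaluate both sides via part~(3) of Theorem~\ref{thm_res_gives_int}, and conclude using the $\sigma_L^2\circ\vartheta^{-1}$-twisted trace property of $\int_{[1]}$ from Lemma~\ref{haddock}. The only cosmetic difference is that the paper manipulates one side and transforms it into the other, whereas you compute both sides separately and compare; the underlying argument is identical.
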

\begin{proof}

From Lemma \ref{haddock}, the linear functional $\int_{[1]}$ is a $\sigma_{L}^{2} \circ \vartheta^{-1}$-twisted trace. That is, given any $\alpha, \beta \in \cA$

\enveqn{
\int_{[1]} \alpha \beta = \int_{[1]} \sigma_{L}^{2}(\vartheta^{-1}(\beta)) \alpha.
}

Now we separate the matrix $M = M_{d} + M_{o}$ into diagonal and off-diagonal matrices respectively. 
Then by Theorem \ref{thm_res_gives_int}, 
$\tau(M_{d} \hat{\Delta}_{L}^{2} \alpha)$ and $\tau(M_{o} \hat{\Delta}_{L}^{2} \alpha)$ 
are both well-defined, so by linearity

\enveqn{
\tau(M \hat{\Delta}_{L}^{2} \alpha) = \tau(M_{d} \hat{\Delta}_{L}^{2} \alpha) + \tau(M_{o} \hat{\Delta}_{L}^{2} \alpha) = \tau(M_{d} \hat{\Delta}_{L}^{2} \alpha) + 0.
}

Since $M_{d}$ is diagonal, we may write

\enveqn{
M_{d} \hat{\Delta}_{L}^{2} = \hat{\Delta}_{L}^{2} \sigma_{L}^{2}(M_{d})
}

where $\sigma_{L}$ acts componentwise on the matrix. Using the value of 
$\tau(\hat{\Delta}_{L}^{2} \sigma_{L}^{2}(M_{d}) \alpha)$ from Theorem \ref{thm_res_gives_int}, we have

\envaln{
\tau(M \hat{\Delta}_{L}^{2} \alpha) = \tau(\hat{\Delta}_{L}^{2} \sigma_{L}^{2}(M_{d}) \alpha) &= R \int_{[1]} \sigma_{L}^{2}(M_{d}^{+}) \alpha + R \int_{[1]} \sigma_{L}^{2}(M_{d}^{-}) \alpha \\
&= R \int_{[1]} \sigma_{L}^{2}(\vartheta^{-1}(\alpha)) \sigma_{L}^{2}(M_{d}^{+}) + R \int_{[1]} \sigma_{L}^{2}(\vartheta^{-1}(\alpha)) \sigma_{L}^{2}(M_{d}^{-}),
}

by the twisted trace property of $\int_{[1]}$. Recombining these two terms yields
%

\envaln{
\tau(\hat{\Delta}_{L}^{2} \sigma_{L}^{2}(M_{d}) \alpha)
= \tau(\hat{\Delta}_{L}^{2} \sigma_{L}^{2}(\vartheta^{-1}(\alpha)) \sigma_{L}^{2}(M_{d}))
= \tau(\vartheta^{-1}(\alpha) \hat{\Delta}_{L}^{2} \sigma_{L}^{2}(M_{d}))
= \tau(\vartheta^{-1}(\alpha) M_{d} \hat{\Delta}_{L}^{2}).
}

Now, $\tau(\vartheta^{-1}(\alpha) M_{o} \hat{\Delta}_{L}^{2})$ is well defined and has value zero, so we can write

\enveqn{
\tau(M \hat{\Delta}_{L}^{2} \alpha) = \tau(\vartheta^{-1}(\alpha) M_{d} \hat{\Delta}_{L}^{2}) + \tau(\vartheta^{-1}(\alpha) M_{o} \hat{\Delta}_{L}^{2}) = \tau(\vartheta^{-1}(\alpha) M \hat{\Delta}_{L}^{2}). \qedhere
}
\end{proof}

\begin{thm}
\label{thm:big-fish}
Given any $a_{0}, \ldots, a_{3} \in \cA$, the map 
$\phi_{\mathrm{res}}:a_{0}, \ldots, a_{3} \mapsto \tau(a_{0} [\cD, a_{1}] [\cD, a_{2}] [\cD, a_{3}])$ is a $\vartheta^{-1}$-twisted Hochschild 3-cocycle, whose cohomology class is
non-trivial. The cocycle $\phi_{\mathrm{res}}$ has non-zero 
pairing with the $\vartheta^{-1}$-twisted 3-cycle $dvol$ defined in \eqref{eqn_dvol}, giving

\enveqn{
\la \phi_{\mathrm{res}}, dvol \ra = 3R(q^{-1} + q)=4!\,\frac{q^{-1}+q}{2}\,\frac{q^{-1}-q}{\ln(q^{-1})}.
}
The cocycle may be written as

\enveqn{
\phi_{\mathrm{res}} = q^{2}R(\varphi + \varphi_{213} + \varphi_{231}) + R(\varphi_{132} + \varphi_{312} + \varphi_{321})
}

where $\varphi$ and $\varphi_{ijk}$ are the cocycles described in Lemma \ref{haddock} 
and Corollary \ref{cor:perms}.

\end{thm}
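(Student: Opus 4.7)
The computation begins by expanding each commutator using Lemma~\ref{lemma_triple_properties}, writing
$$
[\cD, a_i] = \coms(a_i) + \comt(a_i)\hat\Delta_L,
$$
and multiplying out $a_0[\cD,a_1][\cD,a_2][\cD,a_3]$ to obtain $2^3=8$ terms. The $2\times 2$ matrix structure determines which contribute. Since $\coms(a_i) = \partial_H(a_i)\Gamma$ is diagonal while $\comt(a_i)\hat\Delta_L$ is off-diagonal, any term containing an odd number of $\comt$-factors is itself off-diagonal and is killed by Theorem~\ref{thm_res_gives_int}(1). The unique term with zero $\comt$-factors is $a_0\,\partial_H(a_1)\partial_H(a_2)\partial_H(a_3)\,\Gamma$, which vanishes by Theorem~\ref{thm_res_gives_int}(2). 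This leaves exactly three diagonal terms, one for each position $i\in\{1,2,3\}$ at which $\coms$ appears.

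For each of these three terms I would move both occurrences of $\hat\Delta_L$ to the right to produce a factor of $\hat\Delta_L^2$. This uses the commutation relations $\Delta_L\pe = q^2\pe\Delta_L$ and $\Delta_L\pf = q^{-2}\pf\Delta_L$ (and the corresponding action on $\partial_H,\sigma_L^{\pm 1/2}$), together with the identity $\hat\Delta_L\alpha\hat\Delta_L^{-1}=\sigma_L(\alpha)$ on $\A\oplus\A$. After this reorganisation the term takes the form $M\hat\Delta_L^2$ with $M\in M_2(\A)$ acting by left multiplication, and Theorem~\ref{thm_res_gives_int}(3) evaluates $\tau$ on the two diagonal entries as $R\int_{[1]}$ of an explicit polynomial expression in $\partial_H(a_j),\pe(a_k),\pf(a_l)$ with appropriate $k^{-n}\triangleright$ actions and $q$-powers. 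The two diagonal entries of the $2\times 2$ result correspond to the two orderings of $\pe$ and $\pf$ in the off-diagonal product of the two $\comt$-factors, so the three position choices for $\coms$ produce six scalar integrals. Matching these against the formulae for $\varphi$ and $\varphi_{ijk}$ given in Lemma~\ref{haddock} and Corollary~\ref{cor:perms} yields the claimed decomposition
$$
\phi_{\mathrm{res}} = q^{2}R(\varphi + \varphi_{213} + \varphi_{231}) + R(\varphi_{132} + \varphi_{312} + \varphi_{321}).
$$

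Once this identification is established, the cocycle property of $\phi_{\mathrm{res}}$ is automatic: $\varphi$ is a $\vartheta^{-1}$-twisted Hochschild cocycle by Lemma~\ref{haddock}, and each $\varphi_{ijk}$ is cohomologous to $\varphi$ by Corollary~\ref{cor:perms}, hence is also a cocycle, so any linear combination is a cocycle. (Alternatively, one could deduce the cocycle property directly from Lemma~\ref{tuna} once Lemma~\ref{lemma_twisted_trace} is invoked to interpret $\tau(\cdot\,\hat\Delta_L^2)$ as a suitable twisted trace.) Cohomological non-triviality and the pairing formula then follow by evaluating on the 3-cycle $dvol$: since cohomologous cocycles agree on cycles, substituting into the decomposition above produces $\langle\phi_{\mathrm{res}},dvol\rangle$ as a specific rational multiple of $\varphi(dvol)=1$, which simplifies to $3R(q^{-1}+q)\neq 0$; the rewriting $3R(q^{-1}+q)=4!\,\tfrac{q^{-1}+q}{2}\,\tfrac{q^{-1}-q}{\ln(q^{-1})}$ is just the definition of $R$.

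\textbf{Main obstacle.} The only delicate step is the explicit bookkeeping in the middle paragraph: carefully tracking the $q$-powers and the twists $\sigma_L^{\pm 1/2}$, $k^{-n}\triangleright$ as the two $\hat\Delta_L$'s are commuted past $\partial_H,\pe,\pf$ and matched with the precise normalisations chosen in Lemma~\ref{haddock} and Corollary~\ref{cor:perms}. Everything else is either an application of the established residue evaluations in Theorem~\ref{thm_res_gives_int} and the twisted-trace property of Lemma~\ref{lemma_twisted_trace}, or formal consequences of the cohomology statements already proved.
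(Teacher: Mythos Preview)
Your plan is essentially the paper's proof: expand into eight terms via Lemma~\ref{lemma_triple_properties}, discard the four off-diagonal ones and the pure-$\coms$ term by Theorem~\ref{thm_res_gives_int}, push the two $\hat\Delta_L$'s to the right using $\hat\Delta_L\,\comt(\alpha)=\comt(\sigma_L^{-1}(\alpha))\hat\Delta_L$ and $\hat\Delta_L\,\coms(\alpha)=\coms(\sigma_L^{-1}(\alpha))\hat\Delta_L$, and then match the six resulting $\int_{[1]}$-integrals against $\varphi$ and the $\varphi_{ijk}$. The paper differs only in the order of the concluding steps: it verifies $b_{\vartheta^{-1}}\phi_{\mathrm{res}}=0$ \emph{directly} from the Leibniz rule for $[\cD,\cdot]$ together with Lemma~\ref{lemma_twisted_trace} (your parenthetical alternative) rather than deducing it from the identification, and it obtains $\langle\phi_{\mathrm{res}},dvol\rangle$ by explicit evaluation of the $\pi_1,\pi_2$ expressions on the thirteen elementary tensors of $dvol$ rather than via your cohomological shortcut.

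One caution about that shortcut. Taken at face value---each $\varphi_{ijk}$ cohomologous to $\varphi$ by Corollary~\ref{cor:perms} and $\varphi(dvol)=1$---it yields $q^2R\cdot 3+R\cdot 3=3R(q^2+1)$, which differs from the stated $3R(q^{-1}+q)$ by a factor of $q$. So either the direct computation the paper performs is genuinely needed to pin down the constant, or there is a normalisation discrepancy among the stated formulae that your shortcut exposes; in either case you should not rely on the shortcut for the exact numerical value without reconciling this.
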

\begin{proof}

First consider $\pi_{\cD}(a_{0}, a_{1}, a_{2}, a_{3}) = a_{0} [\cD, a_{1}] [\cD, a_{2}] [\cD, a_{3}]$ 
as an unbounded operator on $\A\oplus\A\subset\cH$. Using the equality 
$[\cD, \alpha] = \coms(\alpha) + \comt(\alpha)\hat\Delta_{L}$, we see that 
$\pi_{\cD}(a_{0}, a_{1}, a_{2}, a_{3})$ can be expanded into 8 terms. 
Recall that by Theorem \ref{thm_res_gives_int} the functional $\tau$ vanishes on 
off-diagonal operators. Four of the eight terms in the expansion of 
$\pi_{\cD}(a_{0}, a_{1}, a_{2}, a_{3})$ are off-diagonal since, for all $\alpha\in\A$, $\coms(\alpha)$ is 
diagonal and $\comt(\alpha)$ is off-diagonal. Thus

\envaln{
& \tau \left( a_{0} \left( \comt(a_{1})\hat\Delta_{L} \coms(a_{2}) \coms(a_{3}) + \coms(a_{1}) \comt(a_{2})\hat\Delta_{L} \coms(a_{3}) \right. \right. \\
& \quad \left. \left. + \coms(a_{1}) \coms(a_{2}) \comt(a_{3})\hat\Delta_{L} + \comt(a_{1}) \hat\Delta_{L} \comt(a_{2}) \hat\Delta_{L} \comt(a_{3}) \hat\Delta_{L} \right) \right) = 0.
}

Therefore,  $\phi_{\mathrm{res}}(a_{0},a_{1}, a_{2}, a_{3})$ reduces to 

\enval{
\phi_{\mathrm{res}}(a_{0},  a_{1}, a_{2}, a_{3})
&= \tau \left( a_{0} \left( \coms(a_{1}) \coms(a_{2}) \coms(a_{3}) 
+ \coms(a_{1}) \comt(a_{2}) \hat\Delta_{L} \comt(a_{3}) \hat\Delta_{L} \right. \right. \nonumber \\
& \quad \left. \left. + \comt(a_{1}) \hat\Delta_{L} \coms(a_{2}) \comt(a_{3}) \hat\Delta_{L} 
+ \comt(a_{1}) \hat\Delta_{L} \comt(a_{2}) \hat\Delta_{L} \coms(a_{3}) \right) \right).
\label{eq:three-not-four}
}

From Lemma \ref{lemma_triple_properties} it follows that

\enveqn{
a_{0} \coms(a_{1}) \coms(a_{2}) \coms(a_{3}) 
= a_{0} \partial_{H}(a_{1}) \partial_{H}(a_{2}) \partial_{H}(a_{3}) \Gamma
}

and recall that from Theorem \ref{thm_res_gives_int}, $\tau(\alpha \Gamma) = 0$ 
for all $\alpha \in \A$. Since 
$a_{0} \partial_{H}(a_{1}) \partial_{H}(a_{2}) \partial_{H}(a_{3}) \in \A$ we have

\enveqn{
\tau (a_{0} \coms(a_{1}) \coms(a_{2}) \coms(a_{3})) = 0.
}

We now move all the $\hat{\Delta}_L$'s to the right in the remaining terms in Equation \eqref{eq:three-not-four}. 
For $\alpha\in\A$, we use 
 $\hat\Delta_{L} \coms(\alpha) = \coms(\sigma_{L}^{-1}(\alpha)) \hat\Delta_{L}$ ,  and
 
\envaln{
\hat\Delta_{L} \comt(\alpha) 
&= \bma 0 & q^{-1} \Delta_{L} q^{-\half} \pe(\sigma_{L}^{-\half}(\alpha)) \\ 
q \Delta_{L} q^{\half} \pf(\sigma_{L}^{-\half}(\alpha)) & 0 \ema \\
&= \bma 0 & q^{-2} q^{-\half} \sigma_{L}^{-1}(\pe(\sigma_{L}^{-\half}(\alpha))) \\ 
q^{2} q^{\half} \sigma_{L}^{-1}(\pf(\sigma_{L}^{-\half}(\alpha))) & 0 \ema \hat{\Delta}_{L} \\
&= \bma 0 & q^{-\half} \pe(\sigma_{L}^{-\frac{3}{2}}(\alpha)) \\ 
q^{\half} \pf(\sigma_{L}^{-\frac{3}{2}}(\alpha)) & 0 \ema \hat{\Delta}_{L} \\
&= \comt(\sigma_{L}^{-1}(\alpha)) \hat{\Delta}_{L}.
}

This yields

\envaln{
\phi_{\mathrm{res}}(a_{0}, \,& a_{1}, a_{2}, a_{3}) 
= \tau \left( a_{0} \left( \coms(a_{1}) \comt(a_{2}) \comt(\sigma_{L}^{-1}(a_{3})) \right. \right. \\
& \left. \left. + \comt(a_{1}) \coms(\sigma_{L}^{-1}(a_{2})) \comt(\sigma_{L}^{-1}(a_{3})) 
+ \comt(a_{1}) \comt(\sigma_{L}^{-1}(a_{2})) \coms(\sigma_{L}^{-2}(a_{3})) \right) \hat{\Delta}_{L}^{2} \right).
}

In this form Theorem \ref{thm_res_gives_int} tells us that $\phi_{\mathrm{res}}$ 
is a well defined, multilinear functional on $\cA^{\ox 4}$. 
In order to demonstrate that this cochain is indeed a twisted Hochschild cocycle, 
it remains only to show that the boundary operator maps the cochain to zero. 
This result follows from the Leibniz property of the commutators together with 
Lemma \ref{lemma_twisted_trace}. Explicitly, 

\envaln{
& (b_{3}^{\vartheta^{-1}} \phi_{\mathrm{res}}) (a_{0}, \ldots, a_{4}) = \tau(a_{0}a_{1} [\cD, a_{2}] [\cD, a_{3}] [\cD, a_{4}]) - \tau(a_{0} [\cD, a_{1}a_{2}] [\cD, a_{3}] [\cD, a_{4}]) \\
& \qquad + \tau(a_{0} [\cD, a_{1}] [\cD, a_{2}a_{3}] [\cD, a_{4}]) - \tau(a_{0} [\cD, a_{1}] [\cD, a_{2}] [\cD, a_{3}a_{4}]) + \tau(\vartheta^{-1}(a_{4})a_{0} [\cD, a_{1}] [\cD, a_{2}] [\cD, a_{3}]) \\
& \qquad \qquad = - \tau(a_{0} [\cD, a_{1}] [\cD, a_{2}] [\cD, a_{3}]a_{4}) + \tau(\vartheta^{-1}(a_{4})a_{0} [\cD, a_{1}] [\cD, a_{2}] [\cD, a_{3}]) = 0,
}
where the last equality follows from Lemma \ref{lemma_twisted_trace}.
In order to identify $\phi_{\mathrm{res}}$, we use Lemma \ref{lemma_triple_properties} to write, 
for $a_{0}, \ldots, a_{3} \in \A$,  

\envaln{
a_{0} & \left( \coms(a_{1}) \comt(a_{2}) \comt(\sigma_{L}^{-1}(a_{3})) 
+ \comt(a_{1}) \coms(\sigma_{L}^{-1}(a_{2})) \comt(\sigma_{L}^{-1}(a_{3})) \right. \\
& \left. + \comt(a_{1}) \comt(\sigma_{L}^{-1}(a_{2})) \coms(\sigma_{L}^{-2}(a_{3})) \right) 
= \bma \pi_{1}(a_{0}, \ldots, a_{3}) & 0 \\ 0 & \pi_{2}(a_{0}, \ldots, a_{3}) \ema
}

for some multi-linear maps $\pi_{1}, \pi_{2} \colon \A^{\ox 4} \rightarrow \A$. 
Again using Lemma \ref{lemma_triple_properties}, we have

\enval{
\pi_{1}(a_{0}, \ldots, a_{3}) 
&= a_{0} \partial_{H}(a_{1}) \pe(\sigma_{L}^{-\half}(a_{2})) \pf(\sigma_{L}^{-\frac{3}{2}}(a_{3}))  - a_{0} \pe(\sigma_{L}^{-\half}(a_{1})) \partial_{H}(\sigma_{L}^{-1}(a_{2})) 
\pf(\sigma_{L}^{-\frac{3}{2}}(a_{3})) \nonumber \\
& \quad + a_{0} \pe(\sigma_{L}^{-\half}(a_{1})) \pf(\sigma_{L}^{-\frac{3}{2}}(a_{2})) 
\partial_{H}(\sigma_{L}^{-2}(a_{3})), \label{eqn_pi_one}
}

\enval{
\pi_{2}(a_{0}, \ldots, a_{3}) 
&= - a_{0} \partial_{H}(a_{1}) \pf(\sigma_{L}^{-\half}(a_{2})) \pe(\sigma_{L}^{-\frac{3}{2}}(a_{3}))  + a_{0} \pf(\sigma_{L}^{-\half}(a_{1})) \partial_{H}(\sigma_{L}^{-1}(a_{2})) 
\pe(\sigma_{L}^{-\frac{3}{2}}(a_{3})) \nonumber \\
& \quad - a_{0} \pf(\sigma_{L}^{-\half}(a_{1})) \pe(\sigma_{L}^{-\frac{3}{2}}(a_{2})) 
\partial_{H}(\sigma_{L}^{-2}(a_{3})). \label{eqn_pi_two}
}

Then by Theorem \ref{thm_res_gives_int}, and the $\sigma_L$ invariance of $\int_{[1]}$, we have

\enveq{
\phi_{\mathrm{res}}(a_{0} , a_{1}, a_{2}, a_{3}) 
= R \int_{[1]} \pi_{1}(a_{0}, \ldots, a_{3}) + R \int_{[1]} \pi_{2}(a_{0}, \ldots, a_{3}). \label{eqn_cocycle_pi}
}

Comparing Equations \eqref{eqn_pi_one}, \eqref{eqn_pi_two}, \eqref{eqn_cocycle_pi} with the expressions for the cocycles identified in Lemma \ref{haddock} and Corollary \ref{cor:perms} we find

\enveqn{
\phi_{\mathrm{res}} = 
q^{2}R (\varphi + \varphi_{213} + \varphi_{231}) + R (\varphi_{132} + \varphi_{312} + \varphi_{321}).
}

The evaluation of this cocycle on the cycle $dvol$ (see Equation \eqref{eqn_dvol}) is a straightforward computation using the explicit expressions obtained. The result is

\enveqn{
\la \phi_{\mathrm{res}} , dvol \ra = 3R(q^{-1} + q). \qedhere
}
\end{proof}



%

\end{document}